\newcounter{item}[section]
\newcounter{kirshr}
\newcounter{kirsha}
\newcounter{kirshb}
\newenvironment{enumroman}{\setcounter{kirshr}{1}
\begin{list}{(\roman{kirshr})}{\usecounter{kirshr}} }{\end{list}}
\newtheorem{theorem}{Theorem}[section]
\newtheorem{lemma}[theorem]{Lemma}
\newtheorem{corollary}[theorem]{Corollary}
\theoremstyle{definition}
\newtheorem{example}[theorem]{Example}
\newtheorem{definition}[theorem]{Definition}
\newtheorem{proposition}[theorem]{Proposition}
\def\R{\mathbb{R}}
\def\Q{\mathbb{Q}}
\def\A{{\mathfrak{A}}}
\def\At{{\sf At}}
\def\B{{\mathfrak{B}}}
\def\C{{\mathfrak{C}}}
\def\Ca{{\mathfrak Ca}}
\def\CA{{\sf CA}}
\def\Cm{{\sf Cm}}
\def\CRCA{{\sf CRCA}}
\def\de{Dedekind-MacNeille}
\def\ef{Ehren\-feucht--Fra\"\i ss\'e}
\def\F{{\mathfrak{F}}}
\def\g{{\sf g}}
\def\Id{{\sf Id}}
\def\K{{\sf K}}
\def\Mo{{\sf M}}
\def\N{{\mathbb{N}}}
\def\nodes{{\sf nodes}}
\def\Nr{{\mathfrak{Nr}}}
\def\Nrr{{\mathfrak{Nr}}}
\def\pa{$\forall$}
\def\pe{$\exists$}
\def\PEA{\sf PEA}
\def\QEA{{\sf QEA}}
\def\r{{\sf r}}
\def\R{\mathfrak{R}}
\def\Ra{{\sf Ra}}
\def\RCA{{\sf RCA}}
\def\Rd{{\mathfrak{Rd}}}
\def\Rl{{\mathfrak Rl}}
\def\rng{{\sf rng}}
\def\Sc{{\sf Sc}}
\def\Sg{{\mathfrak Sg}}
\def\Tm{{\mathfrak{Tm}}}
\def\w{{\sf w}}
\def\ws{winning strategy}
\def\y{{\sf y}}
\def\Z{{\mathbb{Z}}}
\def\D{{\mathfrak{D}}}
\def\Ra{{\sf Ra}}
\def\set#1{ \{#1\}}
\def\Mo{{\sf Mo}}
\def\restr #1{{\restriction_{#1}}}
\def\ws{winning strategy}
 \def\CA{{\sf CA}}
\def\B{{\sf B}}
\def\w{{\sf w}}
\def\y{{\sf y}}
\def\g{{\sf g}}
\def\b{{\sf b}}
\def\r{{\sf r}}
\def\K{{\sf K}}
\def\pa{$\forall$}
\def\pe{$\exists$}
\def\ef{Ehren\-feucht--Fra\"\i ss\'e}
\def\M{{\mathfrak{M}}}
\def\Ca{{\mathfrak{Ca}}}
\def\M{{\mathfrak{M}}}
\def\A{{\mathfrak{A}}}
\def\B{{\mathfrak{B}}}
\def\C{{\mathfrak{C}}}
\def\D{{\mathfrak{D}}}
\def\M{{\mathfrak{M}}}
\def\dom{{\sf dom}}
\def\rng{{\sf rng}}
\def\Rd{{\sf{Rd}}}
\def\At{{\sf At}}
\def\Ra{{\mathfrak{Ra}}}
\def\Tm{{\mathfrak{Tm}}}
\def\Cm{{\mathfrak{Cm}}}
\def\E{{\mathfrak{E}}}
\def\Bl{{\mathfrak{Bl}}}
\def\ef{Ehren\-feucht--Fra\"\i ss\'e}
\def\Id{{\sf Id}}
\def\Rl{{\mathfrak{Rl}}}
\def\F{{\mathfrak{F}}}
\def\Sc{{\mathfrak{Sc}}}
\def\QEA{{\sf QEA}}
\def\RCA{{\sf RCA}}
\def\QEA{{\bf PEA}}
\def\R{{\sf R}}
\def\U{\mathfrak{U}}
\def\Df{{\sf Df}}
\def\Rd{{\mathfrak{Rd}}}
\def\PEA{{\sf QEA}}
\def\la#1{\langle#1\rangle}
\def\Sc{{\sf Sc}}
\def\nodes{{\sf nodes}}
\def\QEA{{\sf PEA}}
\def\cyl#1{{\sf c}_{#1}}
\def\cyl#1{{\sf c}_{#1}}
\def\sub#1#2{{\sf s}^{#1}_{#2}}
\def\diag#1#2{{\sf d}_{#1#2}}
\def\V{{\sf V}}
\def\de{Dedekind-MacNeille}
\def\QA{{\sf QA}}
\def\Z{{\mathbb{Z}}}
\def\Cs{{\sf Cs}}
\def\Ra{{\sf Ra}}
\def\Mo{{\sf M}}
\def\QEA{{\sf QEA}}
\def\RA{{\sf RA}}
\def\CRCA{{\sf CRCA}}
\def\b#1{{\bar{ #1}}}
\def\Nrr{\mathfrak{{Nr}}}
\def\w{{\sf w}}
\def\g{{\sf g}}
\def\y{{\sf y}}
\def\r{{\sf r}}
\def\bb{{\sf b}} 
\def\PEA{{\sf PEA}}
\def\R{\mathfrak{R}}
\def\Nr{{\sf Nr}}
\def\Q{{\mathbb{Q}}}
\def\H{{\cal{H}}}
\def\P{{\cal P}}
\title{Non elementary classes of relation and cylindric algebras}%
\author{Tarek Sayed Ahmed\\
Department of Mathematics, Faculty of Science,\\
Cairo University, Giza, Egypt.}
\date{}
\begin{document}
\maketitle

\begin{abstract} For any pair of ordinals $\alpha<\beta$, $\sf CA_\alpha$ 
denotes the class of cylindric algebras of dimension $\alpha$, $\sf RCA_{\alpha}$ denote te class of represntable $\CA_{\alpha}$s 
and $\Nr_{\alpha}\CA_{\beta}$ ($\Ra\CA_{\beta})$ denotes the class of $\alpha$-neat reducts (relation algebra reducts) of $\CA_{\beta}$.
We show that any class $\sf K$ such that $\Ra\CA_{\omega}\subseteq \sf K\subseteq  \Ra\CA_5$, $\sf K$  is not elementary, i.e not definable in first order logic. Let $2<n<\omega$.
It is also shown that any class $\sf K$ such that $\Nr_n\CA_{\omega}\cap {\sf CRCA}_n\subseteq {\sf K}\subseteq \bold S_c\Nr_n\CA_{n+3}$, where $\CRCA_n$ is the class of completely representable
$\CA_n$s, and $\bold S_c$ denotes the operation of forming complete subalgebras, is proved not to be elementary. Finally, we show that any class $\sf K$ such that $\bold S_d\Ra\CA_{\omega}
\subseteq {\sf K}\subseteq \bold S_c\Ra\CA_5$ is not elementary. It remains to be seen whether  there exist elementary classes between $\Ra\CA_{\omega}$ 
and $\bold S_d\R\CA_{\omega}$.
In particular, for $m\geq n+3$, the classes $\Nr_n\CA_m$, $\CRCA_n$, $\bold S_d\Nr_n\CA_m$, where $\bold S_d$ is the operation of forming dense subalgebras  
are not first order definable\footnote{Keywords: neat reducts, complete representations, first order definability 
Mathematics subject classification: 03B45, 03G15.}
\end{abstract}
\section{Introduction}
We follow the notation of \cite{1} which is in conformity with the notation in the monograph. Relation algebras $\sf RA$s are abstractions of algebras whose universe consists of binary relations,  
together with the Boolean operations of intersection, union and complemention, and additional binary operation of compoition of relations, the unary one forming converses of relations and the identity relation as a constant element in the signature. 
We consider relation algebras as algebras of the form ${\cal R}=\langle R, +, \cdot, -, 1', \smile, ; , \rangle$, where $\langle R, +, \cdot , -\rangle$ is a Boolean algebra $1'\in R$, $\smile$ is the unary operation of forming conversesand $;$ is th binary operation of compotion. 
A relation algebra is {\it representable}$\iff$ it is isomorphic to a subalgebra of the form $\langle \wp(X), \cup, \cap, \sim, \smile, \circ, Id\rangle$, where $X$ is an equivalence relation, $1'$ is interpreted as the identity relation, $\smile$ is the operation of forming converses, 
and$;$  is interpreted as composition of relations. 
Following standard notation, $(\sf R)RA$ denotes the class of (representable) relation algebras. The class $\sf RA$ is  a discriminator variety that is finitely axiomatizable, cf. \cite[Definition 3.8, Theorems 3.19]{HHbook}.  
Algebras consisting of relations of possibly higher arity $\alpha$, $\alpha$ an arbitary ordinal are represented by cylindric algebras of dimension $\beta$. 
In this case the concrete versions of such algebras are Boolean algbras with operators where the extra Booolean operations are the projections or cylindrifcations ${\sf C}_k$ ($k<\beta)$ 
and so-called diagonal elements reflecting equality. 
For a set $V$, ${\cal B}(V)$ denotes the Boolean set algebra $\langle \wp(V), \cup, \cap, \sim, \emptyset, V\rangle$.
Let $U$ be a set and $\alpha$ an ordinal; $\alpha$ will be the dimension of the algebra.
For $s,t\in {}^{\alpha}U$ write $s\equiv_i t$ if $s(j)=t(j)$ for all $j\neq i$.
For $X\subseteq {}^{\alpha}U$ and $i,j<\alpha,$ let
$${\sf C}_iX=\{s\in {}^{\alpha}U: (\exists t\in X) (t\equiv_i s)\}$$
and
$${\sf D}_{ij}=\{s\in {}^{\alpha}U: s_i=s_j\}.$$
$\langle{\cal B}(^{\alpha}U), {\sf C}_i, {\sf D}_{ij}\rangle_{i,j<\alpha}$ is called {\it the full cylindric set algebra of dimension $\alpha$}
with unit (or greatest element) $^{\alpha}U$. Any subalgebra of the latter is called a {\it set algebra of dimension $\alpha$}.
Examples of subalgebras of such set algebras arise naturally from models of first order theories.
Indeed, if $\Mo$ is a first order structure in a first
order signature $L$ with $\alpha$ many variables, then one manufactures a cylindric set algebra based on $\Mo$ as follows, cf. \cite[\S4.3]{HMT2}.
Let
$$\phi^{\Mo}=\{ s\in {}^{\alpha}{\Mo}: \Mo\models \phi[s]\},$$
(here $\Mo\models \phi[s]$ means that $s$ satisfies $\phi$ in $\Mo$), then the set
$\{\phi^{\Mo}: \phi \in Fm^L\}$ is a cylindric set algebra of dimension $\alpha$, where $Fm^L$ denotes the set of first order formulas taken in 
the signature $L$.  To see why, we have:
\begin{align*}
\phi^{\Mo}\cap \psi^{\Mo}&=(\phi\land \psi)^{\Mo},\\
^{\alpha}{\Mo}\sim \phi^{\Mo}&=(\neg \phi)^{\Mo},\\
{\sf C}_i(\phi^{\Mo})&=(\exists v_i\phi)^{\Mo},\\
{\sf D}_{ij}&=(x_i=x_j)^{\Mo}.
\end{align*}
Following \cite{HMT2}, $\Cs_{\alpha}$ denotes the class of all subalgebras of full set algebras of dimension $\alpha$.
The (equationally defined) $\CA_{\alpha}$ class is obtained from cylindric set algebras by a process of abstraction and is defined by a {\it finite} schema
of equations given in \cite[Definition 1.1.1]{HMT2} that holds of course in the more concrete set algebras. 
\begin{definition} Let $\alpha$ be an ordinal. By \textit{a cylindric algebra of dimension} $\alpha$, briefly a
$\CA_{\alpha}$, we mean an
algebra
$$ {\A} = \langle A, +, \cdot,-, 0 , 1 , {\sf c}_i, {\sf d}_{ij}\rangle_{\kappa, \lambda < \alpha}$$ where $\langle A, +, \cdot, -, 0, 1\rangle$
is a Boolean algebra such that $0, 1$, and ${\sf d}_{i j}$ are
distinguished elements of $A$ (for all $j,i < \alpha$),
$-$ and ${\sf c}_i$ are unary operations on $A$ (for all
$i < \alpha$), $+$ and $.$ are binary operations on $A$, and
such that the following equations  are satisfied for any $x, y \in
A$ and any $i, j, \mu < \alpha$:
\begin{enumerate}
\item [$(C_1)$] $  {\sf c}_i 0 = 0$,
\item [$(C_2)$]$  x \leq {\sf c}_i x \,\ ( i.e., x + {\sf c}_i x = {\sf c}_i x)$,
\item [$(C_3)$]$  {\sf c}_i (x\cdot {\sf c}_i y )  = {\sf c}_i x\cdot  {\sf c}_i y $,
\item [$(C_4)$] $  {\sf c}_i {\sf c}_j x   = {\sf c}_j {\sf c}_i x $,
\item [$(C_5)$]$  {\sf d}_{i i} = 1 $,
\item [$(C_6)$]if $  i \neq j, \mu$, then
 ${\sf d}_{j \mu} = {\sf c}_i
 ( {\sf d}_{j i} \cdot  {\sf d}_{i \mu}  )  $,
\item [$(C_7)$] if $  i \neq j$, then
 ${\sf c}_i ( {\sf d}_{i j} \cdot  x) \cdot   {\sf c}_i
 ( {\sf d}_{i j} \cdot  - x) = 0
 $.
\end{enumerate}
\end{definition}Let $\alpha$ be an ordinal and $\A\in \CA_{\alpha}$. For any $i, j, l<\alpha$, let ${\sf s}_i^jx=x$ if $i=j$ and ${\sf s}_i^jx={\sf c}_j({\sf d}_{ij}\cdot x)$ 
if $i\neq j$. Let $_l{\sf s}(i, j)x={\sf s}^l_i{\sf s}_j^i{\sf s}_l^jx$.
In the next definition, in its first item  we define the notion of forming $\alpha$-neat reducts of $\CA_{\beta}$s with $\beta>\alpha$, in symbols $\Nrr_{\alpha}$, 
 and in the second item we define relation algebras obtained from cylindric algebras using the operator $\Nrr_2$.
\begin{definition}
\begin{enumerate} 
\item Assume that $\alpha<\beta$ are ordinals and that 
$\B\in \CA_{\beta}$. Then the {\it $\alpha$--neat reduct} of $\B$, in symbols
$\mathfrak{Nr}_{\alpha}\B$, is the
algebra obtained from $\B$, by discarding
cylindrifiers and diagonal elements whose indices are in $\beta\setminus \alpha$, and restricting the universe to
the set $Nr_{\alpha}B=\{x\in \B: \{i\in \beta: {\sf c}_ix\neq x\}\subseteq \alpha\}.$
\item Assume that $\alpha\geq 3$. Let $\A\in \CA_{\alpha}$. Then $\Ra\A=\langle Nr_2\A:+, \cdot, -, ;,  {\sf d}_{01}\rangle$ where for any $x,y\in Nr_n\A$, 
$x;y={\sf c}_2({\sf s}_2^1x\cdot {\sf s}_2^0y)$ and $x=_2{\sf s}(0.1)x$ 
\end{enumerate}
\end{definition}
If $\A\in \CA_3$, $\Ra\A$, having he same signature as $\sf RA$  may not be  a relation algebra as associativiy of the (abstract) composition operation may fail, but for $\alpha\geq 4$, $\Ra\CA_{\beta}\subseteq \RA$.
It is straightforward to check that $\mathfrak{Nr}_{\alpha}\B\in \CA_{\alpha}$. 
Let $\alpha<\beta$ be ordinals. If $\A\in \CA_\alpha$ and $\A\subseteq \mathfrak{Nr}_\alpha\B$, with $\B\in \CA_\beta$, then we say that $\A$ {\it neatly embeds} in $\B$, and 
that $\B$ is a {\it $\beta$--dilation of $\A$}, or simply a {\it dilation} of $\A$ if $\beta$ is clear 
from context. For $\bold K\subseteq \CA_{\beta}$, 
we write $\Nr_{\alpha}\bold K$ for the class $\{\mathfrak{Nr}_{\alpha}\B: \B\in \bold K\}.$
Following \cite{HMT2}, ${\sf Gs}_n$ denotes the class of {\it generalized cylindric set algebra of dimension $n$}; $\C\in {\sf Gs}_n$, if $\C$ has top element
$V$ a disjoint union of cartesian squares,  that is $V=\bigcup_{i\in I}{}^nU_i$, $I$ is a non-empty indexing set, $U_i\neq \emptyset$  
and  $U_i\cap U_j=\emptyset$  for all $i\neq j$. The operations of $\C$ are defined like in cylindric set algebras of dimension $n$ 
relativized to $V$. By the same token the variety of representable relation algebras is denoted by $\sf RRA$. It is known that $\bold I{\sf Gs}_n={\sf RCA}_n=\bold S\Nr_n\CA_{\omega}=\bigcap_{k\in \omega}\bold S\Nr_n\CA_{n+k}$
and that $\sf RRA=\bold S\Ra\CA_{\omega}=\bigcap_{k\in \omega}\bold S\Ra\CA_{3+k}$. . 
We often identify set algebras with their domain referring to an injection $f;\A\to \wp(V)$ ($\A\in \CA_n$) as a complete representation  of $\A$ (via $f$) 
where $V$  is a ${\sf Gs}_n$ unit.
\begin{definition} An algebra $\A\in {\sf CA}_n$ is {\it completely representable} $\iff$ there exists $\C\in {\sf Gs}_n$, and an isomorphism $f:\A\to \C$ such that for all $X\subseteq \A$, 
$f(\sum X)=\bigcup_{x\in X}f(x)$, whenever $\sum X$ exists in $\A$. In this case, we say that $\A$ is {\it completely representable via $f$.}
\end{definition}
It is known that $\A$ is completely representable via $f:\A\to \C$, where $\C\in {\sf Gs}_n$ has top element $V$ say 
$\iff$ $\A$ is atomic and $f$ is {\it atomic} in the sense that 
$f(\sum \At\A)=\bigcup_{x\in \At\A}f(x)=V$ \cite{HH}. We denote the class of completely representable $\CA_n$s by $\CRCA_n$.
Complete representations for $\sf RA$s are defined analogously. The cl;ass of completely representable $\sf RA$ is denoted by $\sf CRRA$. 

Unless otherwise indicated, $n$ will be a finite ordinal $>2$. In \cite{r} it is proved that any class between $\sf K$ between $\bold S_c\Ra\CA_{\omega}\cap \sf CRRA$ and $\bold S_c\Ra\CA_5$, $\sf K$ is not elementary.
Here we strenghthen this result by replacing the first $\bold S_c$  by $\bold S_d$ (the operation of forming dense subalgebras).We recall that for $\sf BAO$s $\A$ and $\B$ having the same signature, 
$\B$ is dense in $\A$, 
written $\B\subseteq_d \A$, if $\B$ is  subalgbra of $\A$ such that for all non-zero $a\in \A$, there exists  a non-zero $b\in \B$ with $b\leq a$.
It is known that for a class $\sf K$ of $\sf BAO$s $\K\subseteq \bold S_d\K\subseteq \bold S_c\K$; 
and the inclusion are proper for Boolean algebras 
(without operators).   For a class $\sf K$ of $\sf BAO$, we let $\bold S_c\K=\{\B: (\exists \A\in \K))(\forall X\subseteq \A)[\sum^{\A}X=1\implies \sum X \text{ exists  in  $\B$ and $\sum^{\B}X=1$}\}$. It is known that for any $\sf K$ of $\sf BAO$s, $\sf K\subseteq \bold S_d\K\subseteq \bold S_c\K$. 
If $\sf K$ happens to be the class of Boolen algebras (without opertors) then these incluions are proper and in the specific case we adress the inclusion is also strict thereby obtaining  a stronger result than that announced in \cite{r2}. 
More importantly, we show that any class $\sf K$, such that $\Ra\CA_{\omega}\cap {\sf CRRA}\subseteq \sf K\subseteq \Ra\CA_5$ is not elementary. In the first case, 
we use so-called Rainbow algebras, in the second case we use so-called Monk algebra where a paricular 
combinatorial form of Ramsey's theorem plays an essential role.  In \cite{IGPL} it is proved that for any pair of infinite ordinals $\alpha<\beta$, the class $\Nr_{\alpha}\CA_{\beta}$ is not elementary. 
A different model theoretic proof for finite $\alpha$ is given in \cite[Theorem 5.4.1]{Sayedneat}.
This result is extended to many cylindric like algebras like Halmos'  polyadic algbras with and without eqaulity, and Pinter's substitution algebras in \cite{Fm, note}. The class $\CRCA_n$ is 
proved not be elementary by Hirsch and Hodkinson in \cite{HH}. Neat embeddings and complete representations are linked in \cite[Theorem 5.3.6]{Sayed} where it is shown that 
$\CRCA_n$ coincides with the class $\bold S_c\Nr_n\CA_{\omega}$ on atomic 
algebra having countably many atoms. Below it is proved that this charactarization does not generalize to atomic algebras having uncountably many atoms.
Completely analogous results are obtained for $\sf RA$s , that is to say, $\bold S_c\Ra\CA_{\omega}$ and $\sf CRRA$ coincide on atomic algebras with countably many atoms, 
and this charcterization does not generalize to the case of atomic $\RA$s having uncountably many atoms.
In fact, we shall prove that there exists an atomless $\C\in \CA_{\omega}$, 
such that for all $n<\omega$, $\Nr_n\A$ and $\Ra\A$ are atomic algebras having uncountably many atoms, bu do not have a complete representation.

In this paper, we use combinatorial 
game theory combined with basic graph theory resorting to (as mentioned above) Rainbow construction which is extremely effficient and flexible in constructing subtle delicate counterexamples. 
Rainbow constructions are based on two player determinitic games and  as the name suggests they involve `colours'. 
Such games happen to be simple \ef\ forth games 
where the two players \pe lloise and \pa belard,  between them, use pebble pairs outside the board, each player pebbling one of the two structures which 
she/he sticks to it during the whole play. In the number of rounds played (that can be transfinite),  \pe\ tries 
to show that  
two simple relational structures $\sf G$ (the greens) and $\sf R$ (the reds) have similar structures  
while \pa\ tries to show that they are essentially distinct. 
Such structures may include ordered structures and complete
irreflexive graphs, such as finite ordinals, $\omega_1$, $\N$, $\Z$ or $\mathbb{R}.$    
A \ws\ for either player in the \ef\ game 
can be lifted to \ws\ in a {\it rainbow game} played on so--called atomic networks on a rainbow atom structure (for both $\CA$s and $\RA$s ) based also 
on $\sf G$ and $\sf R$. Once $\sf G$ and $\sf R$ are specified, 
the rainbow atom structure is uniquely defined. 
Though more (rainbow) colours (like whites and shades of yellow) are involved in the rainbow 
atom structure,  the crucial thing here is that the number of rounds and nodes in networks used in the rainbow game,  
depend recursively on the number of rounds and pebble pairs in the 
simple \ef\ forth two player game played on $\sf G$ and $\sf R$. 
Due to the control on \ws's in terms of the relational structures ($\sf G$ and $\sf R$) chosen in advance, 
and the number of pebble pairs used outside the board,  rainbow constructions have proved highly effective in providing subtle counterexamples to really bewildering `yes or no' assertions 
for both $\CA$s and $\RA$s (relation algebras)  cf. \cite{HH, HHbook, HHbook2, mlq}

\section{Preliminaries}

 From now on,  unless otherwise indicated, $n$ is fixed to be  a finite ordinal $>2$.
Let $i<n$. For $n$--ary sequences $\bar{x}$ and $\bar{y}$,  we write $\bar{x}\equiv_ i\bar{y}$ $\iff \bar{y}(j)=\bar{x}(j)$ for all $j\neq i$,
For $i, j<n$ the replacement $[i/j]$ is the map that is like the identity on $n$, except that $i$ is mapped to $j$ and the transposition 
$[i, j]$ is the like the identity on $n$, except that $i$ is swapped with $j$. 
\begin{definition}\label{game}  
\begin{enumerate}
\item An {\it $n$--dimensional atomic network} on an atomic algebra $\A\in \CA_n$  is a map $N: {}^n\Delta\to  \At\A$, where
$\Delta$ is a non--empty finite set of {\it nodes}, denoted by $\nodes(N)$, satisfying the following consistency conditions for all $i<j<n$: 
\begin{enumroman}
\item If $\bar{x}\in {}^n\nodes(N)$  then $N(\bar{x})\leq {\sf d}_{ij}\iff\bar{x}_i=\bar{x}_j$,
\item If $\bar{x}, \bar{y}\in {}^n\nodes(N)$, $i<n$ and $\bar{x}\equiv_i \bar{y}$, then  $N(\bar{x})\leq {\sf c}_iN(\bar{y})$,
\end{enumroman}
Let $i<n$. For $n$--dimensional atomic networks $M$ and $N$,  we write 
$M\equiv_i N\iff M(\bar{y})=N(\bar{y})$ for all $\bar{y}\in {}^{n}(n\sim \{i\})$.

\item  Assume that $\A\in \CA_n$ is  atomic and that $m, k\leq \omega$. 
The {\it atomic game $G^m_k(\At\A)$, or simply $G^m_k$}, is the game played on atomic networks
of $\A$ using $m$ nodes and having $k$ rounds \cite[Definition 3.3.2]{HHbook2}, where
\pa\ is offered only one move, namely, {\it a cylindrifier move}: \\
Suppose that we are at round $t>0$. Then \pa\ picks a previously played network $N_t$ $(\nodes(N_t)\subseteq m$), 
$i<n,$ $a\in \At\A$, $\bar{x}\in {}^n\nodes(N_t)$, such that $N_t(\bar{x})\leq {\sf c}_ia$. For her response, \pe\ has to deliver a network $M$
such that $\nodes(M)\subseteq m$,  $M\equiv _i N$, and there is $\bar{y}\in {}^n\nodes(M)$
that satisfies $\bar{y}\equiv _i \bar{x}$ and $M(\bar{y})=a$.  

We write $G_k(\At\A)$, or simply $G_k$, for $G_k^m(\At\A)$ if $m\geq \omega$.
\item The $\omega$--rounded game $\bold G^m(\At\A)$ or simply $\bold G^m$ is like the game $G_{\omega}^m(\At\A)$ 
except that \pa\ has the option 
to reuse the $m$ nodes in play.
\end{enumerate}
\end{definition}
\begin{proposition}\label{rep} Suppose that $\A\in {\sf CA}_n$ is atomic having countably many atoms. 
Then \pe\ has a \ws\ in $G_{\omega}(\At\A)\iff$ \pe\ has a \ws\ in $\bold G^{\omega}(\At\A)\iff\A\in {\sf CRCA}_n$. 
In particular, if $\A$ is finite, then 
\pe\ has a \ws\ in $G_{\omega}(\At\A)\iff \A$ 
is representable.
\end{proposition}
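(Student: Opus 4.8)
The plan is to prove the three conditions equivalent by going round the cycle: if $\A\in\CRCA_n$ then \pe\ has a \ws\ in $\bold G^{\omega}(\At\A)$; if the latter holds then \pe\ has a \ws\ in $G_{\omega}(\At\A)$; and if \emph{that} holds then $\A\in\CRCA_n$. The second link costs nothing: every play of $G_{\omega}(\At\A)=G_{\omega}^{\omega}(\At\A)$ is a play of $\bold G^{\omega}(\At\A)$ in which \pa\ simply never avails himself of the node--reuse option, so any \ws\ for \pe\ in $\bold G^{\omega}$ is, in particular, a \ws\ in $G_{\omega}$.

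First I would derive a \ws\ for \pe\ in $\bold G^{\omega}(\At\A)$ from a complete representation $h:\A\to\C$, $\C\in{\sf Gs}_n$ with base $U$. \pe's invariant is that her current network $N$ carries an \emph{injective} map $\theta:\nodes(N)\to U$ for which $N(\bar x)$ is the (necessarily unique) atom of $\A$ with $\theta\circ\bar x\in h(N(\bar x))$, for all $\bar x\in{}^n\nodes(N)$; this uses that $h$ is atomic, and, combined with $h$ being a homomorphism, a short check shows that injectivity of $\theta$ forces the network conditions (i),(ii) for $N$. Faced with a cylindrifier move $N(\bar x)\le{\sf c}_ia$ ($a$ an atom), we have $\theta\circ\bar x\in h({\sf c}_ia)$, which is the $i$th cylindrification of $h(a)$ in $\C$, so there is $\bar s\equiv_i\theta\circ\bar x$ with $\bar s\in h(a)$; if such an $\bar s$ can be chosen with $\bar s(i)=\theta(z)$ for some $z\in\nodes(N)$, \pe\ reuses $z$, and otherwise $\bar s(i)$ is a point outside the range of $\theta$, so \pe\ may adjoin a fresh node mapped to it and keep $\theta$ injective. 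Either way she returns the induced network $M\equiv_i N$ satisfying \pa's demand; the bookkeeping for \pa's explicit reuse moves is handled in the same way, as in \cite{HHbook2}. The invariant survives for ever, so \pe\ never loses.

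Next, and this is where countability of $\At\A$ enters, I would turn a \ws\ $\sigma$ for \pe\ in $G_{\omega}(\At\A)$ into a complete representation. For each atom $a_0$, play the game with \pe\ following $\sigma$, \pa\ opening with $a_0$, and \pa\ thereafter scheduling his cylindrifier moves by a fair dovetailing so that every challenge $(\bar x,i,a)$ --- with $\bar x$ a tuple of a network already built, $i<n$, $a$ an atom and $N(\bar x)\le{\sf c}_ia$ --- is eventually played; this is possible since only countably many networks occur and $\At\A$ is countable. With $\omega$ nodes available \pe\ never discards a node, so the networks form a chain and their union $N_{a_0}$ is a \emph{complete} network: $N_{a_0}(\bar x)\le{\sf c}_ia$ with $a$ an atom implies $N_{a_0}(\bar y)=a$ for some $\bar y\equiv_i\bar x$. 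Setting $h_{a_0}(s)=\{\bar x:N_{a_0}(\bar x)\le s\}$, one checks that $h_{a_0}$ is a homomorphism into a $\Cs_n$: the Boolean clauses use that labels are atoms, the diagonal clause is condition (i), and the cylindrifier clause uses condition (ii) together with completeness of $N_{a_0}$ and the cylindric axioms --- essentially $(C_3)$ --- which yield that an atom below ${\sf c}_is$ lies below ${\sf c}_ia$ for some atom $a\le s$; the companion observation that an atom below a supremum lies below a summand shows $h_{a_0}$ preserves all sums existing in $\A$. Taking the node sets pairwise disjoint and forming $h=\bigcup_{a_0\in\At\A}h_{a_0}:\A\to\wp(V)$ with $V=\bigcup_{a_0}{}^n\nodes(N_{a_0})$, one obtains a homomorphism onto a ${\sf Gs}_n$ algebra with unit $V$ that preserves existing sums and is injective, since $a_0$ is realized in $N_{a_0}$, so $h(a_0)\ne\emptyset$; hence $\A\in\CRCA_n$ and the cycle closes. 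The last sentence is then immediate: a finite $\A$ is atomic with finitely many atoms, and since every representation of a finite algebra preserves all (finite) sums and is therefore complete, the three equivalent conditions are, for finite $\A$, also equivalent to $\A\in\RCA_n$, i.e.\ to $\A$ being representable.

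The main obstacle is the third implication. The delicate parts are: arranging \pa's dovetailing so that the limit network is genuinely complete; verifying by means of the cylindric axioms that \emph{completeness of the network} is exactly what makes $h_{a_0}$ commute with the cylindrifiers --- here one must reduce a cylindrifier of an arbitrary element to a cylindrifier of an atom, which is precisely where $(C_3)$ is used; and gluing the partial pictures $h_{a_0}$ into a single \emph{injective}, sum--preserving representation over a ${\sf Gs}_n$ unit. Countability of $\At\A$ is used essentially twice here, once for the fair dovetailing inside a single $\omega$--round play and once for the disjoint union over atoms, which is exactly why the statement is restricted to algebras with countably many atoms; the characterization genuinely fails without it, as is shown later in the paper.
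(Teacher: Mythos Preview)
Your argument is correct and follows the standard route --- the one contained in the reference the paper invokes. The paper's own proof is essentially a citation: it appeals to \cite[Theorem 3.3.3]{HHbook2} for the equivalence of a \ws\ for \pe\ in $G_{\omega}(\At\A)$ with $\A\in\CRCA_n$, and adds only the observation that $G_{\omega}$ and $\bold G^{\omega}$ are equivalent here (node reuse is no genuine extra power for \pa\ when $\omega$ many nodes are available). You have supplied precisely the details behind that citation: a complete representation guides \pe\ through $\bold G^{\omega}$ via an injective embedding of nodes into the base, and conversely a fair $\omega$--schedule against \pe's strategy in $G_{\omega}$ produces a complete network for each atom, from which one reads off a ${\sf Gs}_n$ representation that respects all existing suprema. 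Your handling of the finite case is also the intended one.

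One small remark on presentation: you correctly argue the cycle $\CRCA_n\Rightarrow\bold G^{\omega}\Rightarrow G_{\omega}\Rightarrow\CRCA_n$, which closes all equivalences. The paper phrases the comparison of the two games a bit differently (it singles out the implication $\bold G^{\omega}\Rightarrow G_{\omega}$ as the one worth noting for countable atom sets and calls the converse trivial), but the content is the same, and your cycle already covers both directions without needing to argue either game implication separately.
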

\begin{proof} \cite[Theorem 3.3.3]{HHbook2},  together with observing that the game $G_{\omega}(\At\A)$ is equivalent to the game $\bold G^{\omega}(\At\A)$, 
in the sense that \pe\ has a \ws\ in $\bold G^{\omega}(\At\A)\implies$ \pe\ has a \ws\ in 
$G_{\omega}(\At\A)$ whenever $\A$ is atomic with countably many atoms (the converse implication is trivial). The rest of the cases are analogous.
\end{proof} 
\begin{lemma}\label{n}
Assume that $2<n<\omega$. Let $m$ be an ordinal $>n$. If  $\A\in \bold S_c\Nr_n\CA$ is atomic, then \pe\ has a \ws\ in $\bold G^m(\At\A).$ 
\end{lemma}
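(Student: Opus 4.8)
The approach is the standard ``a completely additive neat reduct wins the bounded atomic game'' argument, in the style of Hirsch--Hodkinson. First I would unwind the hypothesis: it yields a dilation $\B$ of $\A$, i.e.\ a cylindric algebra of dimension $\ge m$ with $\A\subseteq_c\Nr_n\B$, and replacing $\B$ by $\Nr_m\B$ if its dimension exceeds $m$ (note $\Nr_n\Nr_m\B=\Nr_n\B$) we may assume $\B\in\CA_m$, so that all the substitution operators and cylindrifiers $\cyl{i}$, $i<m$, act on $\B$. Since $\A$ is atomic, $\sum^{\A}\At\A=1$; since $\A$ is a \emph{complete} subalgebra of $\Nr_n\B$, this join is preserved, so $\sum^{\Nr_n\B}\At\A=1$, and distributing multiplication over it records the only use made of the $\bold S_c$--hypothesis: every nonzero $c\in\Nr_n\B$ lies above an atom of $\A$.

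For the strategy, \pe\ plays atomic networks $N$ with $\nodes(N)\subseteq m$ and keeps the invariant that $N$ is \emph{$\B$--consistent}: $\widehat{N}:=\prod_{\bar{x}\in{}^{n}\nodes(N)}\s_{\bar{x}}\,N(\bar{x})\neq 0$ in $\B$, where $\s_{\bar{x}}$ is the composite substitution operator of $\B$ associated with $\bar{x}\colon n\to m$ (with the usual convention when $\bar{x}$ has repeated entries, as in \cite{HHbook2}); a $\B$--consistent network automatically satisfies the consistency conditions (i),(ii) of Definition~\ref{game}. At a cylindrifier move, \pa\ presents a $\B$--consistent $N$, an index $i<n$, an atom $a\in\At\A$, and $\bar{x}\in{}^{n}\nodes(N)$ with $N(\bar{x})\leq\cyl{i}a$. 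Since in $\bold G^{m}$ \pa\ permits the $m$ nodes to be reused, \pe\ may choose a node $k<m$ not occurring in $\rng(\bar{x})$ (which has $\le n<m$ members), passing if necessary to the sub-network $N'$ of $N$ on the remaining nodes (so $N'=N$ when $k$ is fresh); $N'$ is still $\B$--consistent because $\widehat{N'}\ge\widehat{N}\ne 0$, and $\bar{x}$ survives in $N'$ with $N'(\bar{x})\le\cyl{i}a$. She reintroduces $k$, puts $M(\bar{y})=a$ where $\bar{y}\equiv_i\bar{x}$ and $\bar{y}(i)=k$, and then fills the remaining new edges one at a time: for an edge $\bar{z}$, she cylindrifies out of the partial product built so far all dimensions outside the ($\le n$) nodes occurring in $\bar{z}$ and re--indexes those into $n$, landing on a nonzero element of $\Nr_n\B$; by the property isolated above it lies over an atom of $\A$, which she assigns to $\bar{z}$, and a short computation shows the partial product stays nonzero. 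That the first step, $M(\bar{y})=a$, does not annihilate the product is exactly where $N(\bar{x})\leq\cyl{i}a$ enters, together with the cylindric axioms $(C_3),(C_4)$ and the commutation identities relating $\s_{\bar{x}}$, $\cyl{k}$ and the elementary substitution moving $\bar{x}(i)$ to $k$; hence $\widehat{M}\neq 0$ and the invariant is restored. (The opening move is the degenerate case of the same procedure, seeded by \pa's nonzero atom.) As the invariant persists through each of the $\omega$ rounds of $\bold G^{m}$, \pe\ never gets stuck and wins.

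The real content --- and the step I expect to be the main obstacle --- is the bookkeeping inside the second paragraph: pinning down the precise definition of $\s_{\bar{x}}$ and of $\widehat{N}$ for arbitrary node-tuples, checking that $\B$--consistency forces the network axioms, and verifying by a somewhat fiddly substitution calculus (using the cylindric axioms) that \pe's response preserves $\B$--consistency, with care that reusing a node genuinely erases its old edges first. None of this is new, so I would adapt --- or simply cite --- the corresponding development in \cite{HHbook2} and, for the relation-algebra analogue, \cite[Chs.~12--13]{HHbook}. I would close with the remark that for $m\ge\omega$ no reuse is ever needed and the argument degenerates, via Proposition~\ref{rep}, to the familiar statement that atomic algebras in $\bold S_c\Nr_n\CA_{\omega}$ are completely representable; the interest of the lemma lies in permitting $m$ to be finite.
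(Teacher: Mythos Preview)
Your proposal is correct and follows essentially the same approach as the paper: both maintain the invariant that the ``product'' $\widehat N$ (the paper writes $N^+$) of the substituted edge-labels is nonzero in the dilation, and both fill new edges one at a time using the fact that substituted atoms sum to $1$. The paper organizes the argument by first isolating three facts (the key one being that for every nonzero $x$ in the dilation $\C$ and every tuple $\bar\imath$, some atom $a$ satisfies ${\sf s}_{\bar\imath}a\cdot x\neq 0$) and then reading off the strategy, whereas you fold these into the description of \pe's play; but the content is the same, and your intended citation of \cite{HHbook2} and \cite[Chs.~12--13]{HHbook} is exactly what the paper leans on for the substitution calculus.

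One small point worth tightening: the atom-finding step is needed for arbitrary nonzero elements of the dilation $\B$, not just of $\Nr_n\B$. Your ``cylindrify out and re-index into $\Nr_n\B$'' manoeuvre is one route, but you then need $(C_3)$ to pull the conclusion back to the original partial product (an atom below the cylindrified element does not automatically meet the uncylindrified one). The paper instead observes directly that $\sum^{\C}\At\A=1$ --- which follows from $\sum^{\Nr_n\C}\At\A=1$ by the same $(C_3)$ trick --- and combines this with complete additivity of ${\sf s}_{\bar\imath}$; that packaging is slightly cleaner and avoids the back-and-forth.
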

\begin{proof} 
First a piece of notation. Let $m$ be a finite ordinal $>0$. An $\sf s$ word is a finite string of substitutions $({\sf s}_i^j)$ $(i, j<m)$,
a $\sf c$ word is a finite string of cylindrifications $({\sf c}_i), i<m$;
an $\sf sc$ word $w$, is a finite string of both, namely, of substitutions and cylindrifications.
An $\sf sc$ word
induces a partial map $\hat{w}:m\to m$:
$\hat{\epsilon}=Id,$ $\widehat{w_j^i}=\hat{w}\circ [i|j]$
and $\widehat{w{\sf c}_i}= \hat{w}\upharpoonright(m\smallsetminus \{i\}).$
If $\bar a\in {}^{<m-1}m$, we write ${\sf s}_{\bar a}$, or
${\sf s}_{a_0\ldots a_{k-1}}$, where $k=|\bar a|$,
for an  arbitrary chosen $\sf sc$ word $w$
such that $\hat{w}=\bar a.$
Such a $w$  exists by \cite[Definition~5.23 ~Lemma 13.29]{HHbook}.

Fix $2<n<m$. Assume that $\C\in\CA_m$, $\A\subseteq_c\mathfrak{Nr}_n\C$ is an
atomic $\CA_n$ and $N$ is an $\A$--network with $\nodes(N)\subseteq m$. Define
$N^+\in\C$ by
\[N^+ =
 \prod_{i_0,\ldots, i_{n-1}\in\nodes(N)}{\sf s}_{i_0, \ldots, i_{n-1}}{}N(i_0,\ldots, i_{n-1}).\]
For a network $N$ and  function $\theta$,  the network
$N\theta$ is the complete labelled graph with nodes
$\theta^{-1}(\nodes(N))=\set{x\in\dom(\theta):\theta(x)\in\nodes(N)}$,
and labelling defined by
$$(N\theta)(i_0,\ldots, i_{n-1}) = N(\theta(i_0), \theta(i_1), \ldots,  \theta(i_{n-1})),$$
for $i_0, \ldots, i_{n-1}\in\theta^{-1}(\nodes(N))$.
Then the following hold:

(1): for all $x\in\C\setminus\set0$ and all $i_0, \ldots, i_{n-1} < m$, there is $a\in\At\A$, such that
${\sf s}_{i_0,\ldots, i_{n-1}}a\;.\; x\neq 0$,

(2): for any $x\in\C\setminus\set0$ and any
finite set $I\subseteq m$, there is a network $N$ such that
$\nodes(N)=I$ and $x\cdot N^+\neq 0$. Furthermore, for any networks $M, N$ if
$M^+\cdot N^+\neq 0$, then
$M\restr {\nodes(M)\cap\nodes(N)}=N\restr {\nodes(M)\cap\nodes(N)},$

(3): if $\theta$ is any partial, finite map $m\to m$
and if $\nodes(N)$ is a proper subset of $m$,
then $N^+\neq 0\rightarrow {(N\theta)^+}\neq 0$. If $i\not\in\nodes(N),$ then ${\sf c}_iN^+=N^+$.


Since $\A\subseteq _c\mathfrak{Nr}_n \C$, then $\sum^{\C}\At\A=1$. 
{\it For (1), ${\sf s}^i_j$ is a
completely additive operator (any $i, j<m$), hence ${\sf s}_{i_0,\ldots, i_{n-1}}$
is, too.}
So $\sum^{\C}\set{{\sf s}_{i_0\ldots, i_{n-1}}a:a\in\At(\A)}={\sf s}_{i_0\ldots i_{n-1}}
\sum^{\C}\At\A={\sf s}_{i_0\ldots, i_{n-1}}1=1$ for any $i_0,\ldots, i_{n-1}<m$.  Let $x\in\C\setminus\set0$.  Assume for contradiction
that  ${\sf s}_{i_0\ldots, i_{n-1}}a\cdot x=0$ for all $a\in\At\A$. Then  $1-x$ will be
an upper bound for $\set{{\sf s}_{i_0\ldots i_{n-1}}a: a\in\At\A}.$
But this is impossible
because $\sum^{\C}\set{{\sf s}_{i_0\ldots, i_{n-1}}a :a\in\At\A}=1.$

To prove the first part of (2), we repeatedly use (1).
We define the edge labelling of $N$ one edge
at a time. Initially, no hyperedges are labelled.  Suppose
$E\subseteq\nodes(N)\times\nodes(N)\ldots  \times\nodes(N)$ is the set of labelled hyperedges of $
N$ (initially $E=\emptyset$) and
$x\;.\;\prod_{\bar c \in E}{\sf s}_{\bar c}N(\bar c)\neq 0$.  Pick $\bar d$ such that $\bar d\not\in E$.
Then by (1) there is $a\in\At(\A)$ such that
$x\;.\;\prod_{\bar c\in E}{\sf s}_{\bar c}N(\bar c)\;.\;{\sf s}_{\bar d}a\neq 0$.
Include the hyperedge $\bar d$ in $E$.  We keep on doing this until eventually  all hyperedges will be
labelled, so we obtain a completely labelled graph $N$ with $N^+\neq 0$.
it is easily checked that $N$ is a network.

For the second part of $(2)$, we proceed contrapositively. Assume that there is
$\bar c \in{}\nodes(M)\cap\nodes(N)$ such that $M(\bar c )\neq N(\bar c)$.
Since edges are labelled by atoms, we have $M(\bar c)\cdot N(\bar c)=0,$
so
$0={\sf s}_{\bar c}0={\sf s}_{\bar c}M(\bar c)\;.\; {\sf s}_{\bar c}N(\bar c)\geq M^+\cdot N^+$.
A piece of notation. For $i<m$, let $Id_{-i}$ be the partial map $\{(k,k): k\in m\smallsetminus\{i\}\}.$
For the first part of (3)
(cf. \cite[Lemma~13.29]{HHbook} using the notation in {\it op.cit}), since there is
$k\in m\setminus\nodes(N)$, \/ $\theta$ can be
expressed as a product $\sigma_0\sigma_1\ldots\sigma_t$ of maps such
that, for $s\leq t$, we have either $\sigma_s=Id_{-i}$ for some $i<m$
or $\sigma_s=[i/j]$ for some $i, j<m$ and where
$i\not\in\nodes(N\sigma_0\ldots\sigma_{s-1})$.
But clearly  $(N Id_{-j})^+\geq N^+$ and if $i\not\in\nodes(N)$ and $j\in\nodes(N)$, then
$N^+\neq 0 \rightarrow {(N[i/j])}^+\neq 0$.
The required now follows.  The last part is straightforward.

Using the above proven facts,  we are now ready to show that \pe\  has a \ws\ in $\bold G^m$. She can always
play a network $N$ with $\nodes(N)\subseteq m,$ such that
$N^+\neq 0$.\\
In the initial round, let \pa\ play $a\in \At\A$.
\pe\ plays a network $N$ with $N(0, \ldots, n-1)=a$. Then $N^+=a\neq 0$.
Recall that here \pa\ is offered only one (cylindrifier) move.
At a later stage, suppose \pa\ plays the cylindrifier move, which we denote by
$(N, \langle f_0, \ldots, f_{n-2}\rangle, k, b, l).$
He picks a previously played network $N$,  $f_i\in \nodes(N), \;l<n,  k\notin \{f_i: i<n-2\}$,
such that $b\leq {\sf c}_l N(f_0,\ldots,  f_{i-1}, x, f_{i+1}, \ldots, f_{n-2})$ and $N^+\neq 0$.
Let $\bar a=\langle f_0\ldots f_{i-1}, k, f_{i+1}, \ldots f_{n-2}\rangle.$
Then by  second part of  (3)  we have that ${\sf c}_lN^+\cdot {\sf s}_{\bar a}b\neq 0$
and so  by first part of (2), there is a network  $M$ such that
$M^+\cdot{\sf c}_{l}N^+\cdot {\sf s}_{\bar a}b\neq 0$.
Hence $M(f_0,\dots, f_{i-1}, k, f_{i-2}, \ldots$ $, f_{n-2})=b$,
$\nodes(M)=\nodes(N)\cup\set k$, and $M^+\neq 0$, so this property is maintained.
\end{proof}
 
We let ${\sf LCA}_n$ denote the elementary class of ${\sf RCA}_n$s satisfying the Lyndon conditions \cite[Definition 3.5.1]{HHbook2} It is known that ${\sf LCA}_n={\bf El}{\sf CRCA}_n$ where ${\bf El}$ denotes `elementary closure'.
\begin{theorem}\label{square} Let $2<n<m\leq \omega$.
${\bf El}\Nr_n\CA_{\omega}\cap {\bf At}\subsetneq {\sf LCA}_n$. Furthermore, 
for any elementary class $\sf K$ between ${\bf El}\Nr_n\CA_{\omega}\cap \bf At$ and ${\sf LCA}_n$, ${\sf RCA}_n$ is generated by 
$\At\sf K$,
\end{theorem}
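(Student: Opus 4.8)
The plan is to prove the inclusion ${\bf El}\Nr_n\CA_{\omega}\cap{\bf At}\subseteq{\sf LCA}_n$, then its properness, then the generation statement. For the inclusion, the observation I would use is that ``atomic'' is expressed, already in the Boolean reduct, by one first-order sentence — every non-zero element has a minimal non-zero element below it — so ${\bf At}$ is an elementary class inside $\CA_n$. Thus if $\A\in{\bf El}\Nr_n\CA_{\omega}\cap{\bf At}$, say $\A\equiv\B$ with $\B=\Nr_n\D\in\Nr_n\CA_{\omega}$, then $\B$ is atomic as well, and since ${\sf LCA}_n$ is elementary it is enough to show $\Nr_n\CA_{\omega}\cap{\bf At}\subseteq{\sf LCA}_n$. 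So let $\A=\Nr_n\D$ be atomic with $\D\in\CA_{\omega}$. First, $\A\in\bold S\Nr_n\CA_{\omega}=\RCA_n$, so $\A$ is representable. Second, for every ordinal $m$ with $n<m\le\omega$ we have $\A=\Nr_n(\Nr_m\D)$ with $\Nr_m\D\in\CA_m$, and trivially $\A\subseteq_c\Nr_n(\Nr_m\D)$, so Lemma~\ref{n} applies and \pe\ has a \ws\ in $\bold G^m(\At\A)$; taking $m=\omega$ she wins $\bold G^{\omega}(\At\A)$, hence wins $G_k(\At\A)$ for every finite $k$. That is exactly the statement that $\A$ satisfies all the Lyndon conditions, so $\A\in{\sf LCA}_n$.

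For properness, note that since $\CRCA_n\subseteq{\bf At}$ and ${\bf At}$ is elementary, ${\sf LCA}_n={\bf El}\CRCA_n\subseteq{\bf At}$; so any witness is automatically atomic, and the real task is to separate ${\bf El}\Nr_n\CA_{\omega}$ from ${\sf LCA}_n$ — to produce an atomic, representable algebra that satisfies every Lyndon condition (so it lies in ${\sf LCA}_n={\bf El}\CRCA_n$) but is not elementarily equivalent to any neat reduct of any $\CA_{\omega}$. This is the hard part, and is where a construction in the style of the paper's other results (a rainbow, or a Monk, algebra) is needed: one wants an atomic algebra on which \pe\ wins all finite-round atomic games (so that it satisfies the Lyndon conditions, and is representable) while its first-order theory contains a sentence failing in every neat reduct of a $\CA_{\omega}$ — for instance an ultraproduct $\prod_{i<\omega}\A_i/U$ of finite representable $\CA_n$s of unbounded ``neat-embedding depth'', each of which, being finite and representable, is completely representable and so satisfies all the Lyndon conditions, while the ultraproduct drifts out of ${\bf El}\Nr_n\CA_{\omega}$. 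Alternatively one can quote a known example witnessing $\CRCA_n\not\subseteq{\bf El}\Nr_n\CA_{\omega}$.

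For the generation statement, let $\sf K$ be any elementary class with ${\bf El}\Nr_n\CA_{\omega}\cap{\bf At}\subseteq{\sf K}\subseteq{\sf LCA}_n$. The key point is that $\sf K$ already contains every full generalized set algebra: if $V=\bigcup_{i\in I}{}^nU_i$ is a ${\sf Gs}_n$ unit, then as cylindric algebras $\wp(V)\cong\prod_{i\in I}\wp(^nU_i)\cong\prod_{i\in I}\Nr_n\wp(^{\omega}U_i)=\Nr_n(\prod_{i\in I}\wp(^{\omega}U_i))$, so $\wp(V)\in\Nr_n\CA_{\omega}$; moreover $\wp(V)$ is atomic and $\Cm V=\wp(V)$. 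Hence $\wp(V)\in\Nr_n\CA_{\omega}\cap{\bf At}\subseteq{\sf K}$, so every such $V$ occurs in $\At{\sf K}$. Since $\RCA_n=\bold I{\sf Gs}_n$, every representable $\CA_n$ embeds into $\Cm V=\wp(V)$ for some ${\sf Gs}_n$ unit $V\in\At{\sf K}$, so $\RCA_n=\bold I\bold S\,\Cm{\sf S}$ where $\sf S\subseteq\At{\sf K}$ is the family of ${\sf Gs}_n$ units; together with ${\sf K}\subseteq{\sf LCA}_n\subseteq\RCA_n$ this is the assertion that $\RCA_n$ is generated by $\At{\sf K}$. The only genuine difficulty is the properness clause; the inclusion and the generation statement are straightforward once Lemma~\ref{n} and the two elementarity facts are in hand.
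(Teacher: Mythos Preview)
Your treatment of the inclusion ${\bf El}\Nr_n\CA_{\omega}\cap{\bf At}\subseteq{\sf LCA}_n$ is essentially the paper's: reduce to $\Nr_n\CA_{\omega}\cap{\bf At}$ by elementarity of ${\sf LCA}_n$, then invoke Lemma~\ref{n} to get a \ws\ for \pe\ in $\bold G^{\omega}(\At\A)$, hence in every $G_k$. Your generation argument is also in the same spirit as the paper's (the paper uses full $\Cs_n$'s and the chain $\RCA_n={\bf SP}{\sf FCs}_n\subseteq{\bf SP}\Cm\At\Nr_n\CA_{\omega}\subseteq\cdots\subseteq{\bf SP}\Cm\At{\sf K}\subseteq\RCA_n$; your version via full ${\sf Gs}_n$'s works equally well, though you should say explicitly why $\Cm\At{\sf K}\subseteq\RCA_n$ --- this uses that Lyndon conditions are conditions on the atom structure, so $\Cm\At\A\in{\sf LCA}_n\subseteq\RCA_n$ whenever $\A\in{\sf LCA}_n$).

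The genuine gap is the properness clause. You do not construct a witness; you only gesture at strategies, and the main one you propose is actually wrong. An ultraproduct of algebras cannot ``drift out'' of ${\bf El}\Nr_n\CA_{\omega}$: by definition ${\bf El}\Nr_n\CA_{\omega}$ is an elementary class, hence closed under ultraproducts, so if $\prod_i\A_i/U\notin{\bf El}\Nr_n\CA_{\omega}$ then by \L o\'s already infinitely many $\A_i$ lie outside ${\bf El}\Nr_n\CA_{\omega}$ --- and any one of those would be your witness, making the ultraproduct detour pointless. So the ultraproduct idea presupposes exactly what you need to exhibit. (Your phrase ``unbounded neat-embedding depth'' suggests you may be thinking of the Hirsch--Hodkinson anti-elementarity of strongly representable atom structures, but that concerns $\bold S\Nr_n\CA_{n+k}$, not ${\bf El}\Nr_n\CA_{\omega}$.)

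The paper's witness is concrete and quite different from a rainbow or Monk construction: take $\A=\wp({}^n\Q)\in\Nr_n\CA_{\omega}$, let $y=\{s\in{}^n\Q:s_0+1=\sum_{i>0}s_i\}$, and let $\E$ be the subalgebra of $\A$ generated by $y$ together with all singletons. Then $\At\E=\At\A$ (the singletons), so $\E\in\CRCA_n\subseteq{\sf LCA}_n$; but a result of Sayed Ahmed and N\'emeti \cite{SL} shows $\E\notin{\bf El}\Nr_n\CA_{n+1}$, a fortiori $\E\notin{\bf El}\Nr_n\CA_{\omega}$. The point is that $\E$ is dense in a full neat reduct without itself being (elementarily equivalent to) a neat reduct --- a phenomenon your proposal does not anticipate.
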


\begin{proof} Throughout the proof fix $2<n<\omega$. 

It suffices to show that ${\sf Nr}_n\CA_{\omega}\cap {\bf At}\subseteq {\sf LCA}_n$, since the last class is elementary. This follows from Lemma \ref{n}, 
since if $\A\in \Nr_n\CA_{\omega}$ is atomic, then \pe\ has a \ws\ in $\bold G^{\omega}(\At\A)$, hence in $G_{\omega}(\At\A)$, {\it a fortiori}, \pe\ has a \ws\ 
in $G_k(\At\A)$ for all $k<\omega$, so (by definition) $\A\in {\sf LCA}_n$. 
To show strictness of the last inclusion, let $V={}^n\Q$ and let ${\A}\in {\sf Cs}_n$ have universe $\wp(V)$.
Then $\A\in {\sf Nr}_{n}\CA_{\omega}$.  
Let 
$y=\{s\in V: s_0+1=\sum_{i>0} s_i\}$ and ${\E}=\Sg^{\A}(\{y\}\cup X)$, where $X=\{\{s\}: s\in V\}$. 
Now $\E$ and $\A$ having same top element $V$, share the same atom structure, namely, the singletons, so 
$\Cm\At\E=\A$. Furthermore, plainly $\A, \E\in {\sf CRCA}_n$. 
So $\E\in {\sf CRCA}_n\subseteq {\sf LCA}_n$, 
and as proved in \cite{SL}, $\E\notin {\bf  El}\Nr_{n}{\sf CA}_{n+1}$, hence $\E$ witnesses the required strict inclusion.

Now we show that $\At{\bf El}\Nr_n\CA_{\omega}$ generates $\RCA_n$.
Let ${\sf FCs}_n$ denote the class of {\it full} $\Cs_n$s, that is ${\sf Cs}_n$s  
having universe $\wp(^nU)$
($U$ non--empty set). 
First we show that ${\sf FCs}_n\subseteq \Cm\At\Nr_n\CA_{\omega}$.
Let $\A\in {\sf FCs}_n$.  Then $\A\in \Nr_n\CA_{\omega}\cap \bf At$, hence $\At\A\in \At\Nr_n\CA_{\omega}$ 
and $\A=\Cm\At\A\in \Cm\At\Nr_n\CA_{\omega}$.
The required now follows from the following chain of inclusions: 
$\RCA_n={\bf SP}{\sf FCs}_n\subseteq {\bf SP}\Cm\At(\Nr_n\CA_{\omega})\subseteq {\bf SP}\Cm\At({\bf El}\Nr_n\CA_{\omega})\subseteq 
{\bf SP}\Cm \At{\sf K}\subseteq {\bf SP}\Cm{\sf LCAS}_n\subseteq  {\sf RCA}_n,$ where $\sf K$ is given above.
\end{proof}

\begin{theorem}\label{bsl} Let $\kappa$ be an infinite cardinal. Then there exists an atomless $\C\in \CA_{\omega}$ such that  for all 
$2<n<\omega$, $\Nrr_n\C$ is atomic, with $|\At(\mathfrak{Nr}_n\C)|=2^{\kappa}$, $\mathfrak{Nr}_n\C\in {\sf LCA}_n$, 
but $\mathfrak{Nr}_n\C$ is not completely representable. 
\end{theorem}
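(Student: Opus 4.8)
The plan is to produce one atomless $\C\in\CA_{\omega}$ by a \emph{blow up and blur} construction and then verify all four requirements simultaneously for every finite $n>2$. The membership $\mathfrak{Nr}_n\C\in{\sf LCA}_n$ comes essentially for free: as soon as $\mathfrak{Nr}_n\C$ is atomic we have $\mathfrak{Nr}_n\C\in\Nr_n\CA_{\omega}\cap{\bf At}$, so by Lemma \ref{n} \pe\ has a \ws\ in $\bold G^{\omega}(\At\mathfrak{Nr}_n\C)$, hence a \ws\ in $G_k(\At\mathfrak{Nr}_n\C)$ for every finite $k$, which is exactly membership in ${\sf LCA}_n$ --- this is the inclusion $\Nr_n\CA_{\omega}\cap{\bf At}\subseteq{\sf LCA}_n$ already used in the proof of Theorem \ref{square}. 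Note that this \ws\ in the $\omega$--rounded game does \emph{not} produce a complete representation of $\mathfrak{Nr}_n\C$: Proposition \ref{rep} gives that equivalence only for countably many atoms, whereas $\mathfrak{Nr}_n\C$ will have $2^{\kappa}>\aleph_0$ of them. So non--complete--representability must be extracted from a completion argument, not from a game.

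For the construction I would start from a finite combinatorial \emph{seed} --- a rainbow or Monk pattern, built on two small relational structures $\sf G$, $\sf R$ --- whose $n$--dimensional complex algebra $\F$ lies outside $\bold S\Nr_n\CA_{n+3}$, and hence outside ${\sf RCA}_n$; this non--representability is the combinatorial heart, witnessed by a Ramsey type argument or, in the rainbow case, by a \ws\ for \pa\ in the \ef\ forth game on $\sf G$, $\sf R$, showing the obstruction already appears after $n+3$ extra dimensions. I then blow the seed up: each atom $r$ is replaced by $2^{\kappa}$ formal copies $\{r^{\xi}:\xi<2^{\kappa}\}$, and the consistency and colouring relations on the enlarged patterns are defined by \emph{blurring} --- copies of a given seed atom are interchangeable for the finite local conditions, while enough discernibility is kept that the blurs cohere into a compatible ``$m$--blur'' in every finite dimension $m$. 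This produces, in each finite dimension $m\geq n$, a blurred atom structure $\Phi_m$, forming a coherent tower with $\Phi_m$ the $m$--dimensional reduct of $\Phi_{m+1}$; put $\A_m=\Tm\Phi_m$, the subalgebra of $\Cm\Phi_m$ generated by its atoms. The ``$m$--blur for all $m$'' clause is arranged precisely so that $\mathfrak{Nr}_m\A_{m+1}=\A_m$; then $\C:=\bigcup_{m\geq n}\A_m\in\CA_{\omega}$, $\mathfrak{Nr}_m\C=\A_m$ for every $m\geq n$, and in particular each $\A_m$, being a neat reduct of $\C\in\CA_{\omega}$, already lies in ${\sf RCA}_m$.

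With $\C$ in hand the verification is short. (i) $\mathfrak{Nr}_n\C=\A_n=\Tm\Phi_n$ is atomic and its atoms are exactly the $2^{\kappa}$ copies of the finitely many atoms of $\F$, so $|\At\mathfrak{Nr}_n\C|=2^{\kappa}$ (this is where $\kappa$ infinite is used). (ii) $\mathfrak{Nr}_n\C\in{\sf LCA}_n$ by the first paragraph. (iii) $\C$ is atomless: if $0\neq x\in\C$ then $x\in\A_m$ for some $m\geq n$, so $x\geq a$ for some atom $a$ of $\A_m$; viewed inside $\mathfrak{Nr}_{m+1}\C=\A_{m+1}$ we still have ${\sf c}_m a=a$, but $a=\sum\{b\in\At\A_{m+1}:b\leq a\}$ is a join of at least two disjoint non--zero members of $\C$ (a blown up $m$--dimensional atom has at least two coherent one--point extensions), so $a$ is not an atom of $\C$, and hence neither is $x$. (iv) $\mathfrak{Nr}_n\C$ is not completely representable: a complete representation $f$ of the atomic algebra $\A_n$ would, via $S\mapsto\bigcup_{a\in S}f(a)$ for $S\subseteq\At\A_n$, extend to a representation of $\Cm\At\A_n=\Cm\Phi_n$, since $f$ preserves all existing suprema and sends $\At\A_n$ onto a partition of the base; but the blur is designed to disappear under \de\ completion, so $\Cm\Phi_n$ recovers the seed --- it has a subalgebra, or homomorphic image, isomorphic to $\F$ --- whence $\Cm\Phi_n\notin\bold S\Nr_n\CA_{n+3}\supseteq{\sf RCA}_n$, contradicting the representability just obtained. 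Since $n>2$ was arbitrary and $\C$ was fixed once and for all, the theorem follows.

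The hard part is the design of the blurred atom structures. The blur must be rich enough that, uniformly in $m$, each term algebra $\Tm\Phi_m$ is not merely representable but a \emph{full} neat reduct $\mathfrak{Nr}_m\C$ of one $\CA_{\omega}$ --- which is the requirement that the blurs cohere into an ``$m$--blur'' in every finite dimension --- and yet thin enough to be annihilated by \de\ completion, so that $\Cm\Phi_n$ stays outside $\bold S\Nr_n\CA_{n+3}$; reconciling these two opposing demands is where the real work lies. Everything else --- that $\C\in\CA_{\omega}$ with $\mathfrak{Nr}_m\C=\A_m$, that $|\At\mathfrak{Nr}_n\C|=2^{\kappa}$ exactly (the atoms of the term algebra are simply the atoms of $\Phi_n$), and the atom--splitting computation in (iii) --- is routine bookkeeping.
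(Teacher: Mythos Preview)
Your plan is coherent but takes a genuinely different route from the paper, and the route you sketch has a real gap that the paper avoids.

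\medskip
\textbf{What the paper does.} The paper does not blow up and blur a finite seed. It builds a single \emph{relation algebra} $\A$ directly: atoms are $\Id$, $2^{\kappa}$ many ``greens'' $\g_0^i$, and $\kappa$ many ``reds'' $\r_j$, with all monochromatic triples forbidden. Non--complete--representability of $\A$ is obtained by the Erd\H{o}s--Rado partition theorem $\exp_r(\kappa)^{+}\to(\kappa^{+})^{r+1}_{\kappa}$: a complete representation would produce $2^{\kappa}$ points pairwise labelled by the $\kappa$ reds, and Erd\H{o}s--Rado then forces a monochromatic red triangle, contradicting the forbidden triples. The dilation $\C\in\CA_{\omega}$ is obtained by exhibiting an $\omega$--dimensional cylindric basis $S$ for $\A$ (networks using only finitely many red labels) and taking $\C$ to be the subalgebra of $\Ca(S)$ generated by an explicit set of ``blocks'' $\widehat{N}$. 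One then checks $\A=\mathfrak{Ra}\C$, so for every $n>2$ a complete representation of $\mathfrak{Nr}_n\C$ would restrict to one of $\A$, which is impossible. Atomlessness of $\C$ is immediate: every block $\widehat{N}$ can be properly refined by adding a node.

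\medskip
\textbf{Where your plan diverges, and the gap.} Your non--CR argument goes through $\Cm\Phi_n\notin{\sf RCA}_n$ rather than through Erd\H{o}s--Rado; this is a legitimate alternative mechanism, and in fact yields the stronger conclusion that even the \de\ completion of $\mathfrak{Nr}_n\C$ is non--representable. But two points need attention. First, you fix a seed in some dimension $n$ and build $\C=\bigcup_{m\geq n}\A_m$, yet the theorem requires the conclusion for \emph{every} $2<n<\omega$ with a single $\C$; your final sentence ``$n>2$ was arbitrary and $\C$ was fixed once and for all'' is not consistent with the construction as written. The paper sidesteps this by working at the relation--algebra level: once $\A=\mathfrak{Ra}\C$ is not CR, neither is $\mathfrak{Nr}_n\C$ for any $n$, automatically. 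Second --- and you acknowledge this --- the demand that $\Tm\Phi_m$ be the \emph{full} neat reduct $\mathfrak{Nr}_m\C$ (not merely a subalgebra of one) uniformly in $m$ is exactly the delicate ``complex $m$--blur for all $m$'' condition from the Andr\'eka--N\'emeti--Sayed Ahmed construction; it is not delivered by the simpler blow--up--and--blur used later in this paper, and you have deferred rather than carried it out. The paper's $\omega$--basis approach replaces this entire difficulty by an explicit amalgamation argument on networks, which is why it can give a complete proof in a page.
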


\begin{proof}
We use the following uncountable version of Ramsey's theorem due to
Erdos and Rado:
If $r\geq 2$ is finite, $k$  an infinite cardinal, then
$exp_r(k)^+\to (k^+)_k^{r+1}$
where $exp_0(k)=k$ and inductively $exp_{r+1}(k)=2^{exp_r(k)}$.
The above partition symbol describes the following statement. If $f$ is a coloring of the $r+1$
element subsets of a set of cardinality $exp_r(k)^+$
in $k$ many colors, then there is a homogeneous set of cardinality $k^+$
(a set, all whose $r+1$ element subsets get the same $f$-value).
Let $\kappa$ be the given cardinal. We use a simplified more basic version of a rainbow construction where only 
the two predominent  colours, namely, the reds and blues are available. 
The algebra $\C$ will be constructed from a relation algebra possesing an $\omega$-dimensional cylindric basis.
To define the relation algebra we specify its atoms and the forbidden triples of atoms. The atoms are $\Id, \; \g_0^i:i<2^{\kappa}$ and $\r_j:1\leq j<
\kappa$, all symmetric.  The forbidden triples of atoms are all
permutations of $({\sf Id}, x, y)$ for $x \neq y$, \/$(\r_j, \r_j, \r_j)$ for
$1\leq j<\kappa$ and $(\g_0^i, \g_0^{i'}, \g_0^{i^*})$ for $i, i',
i^*<2^{\kappa}.$ 
Write $\g_0$ for $\set{\g_0^i:i<2^{\kappa}}$ and $\r_+$ for
$\set{\r_j:1\leq j<\kappa}$. Call this atom
structure $\alpha$.  
Consider the term algebra $\A$ defined to be the subalgebra of the complex algebra of this atom structure generated by the atoms.
We claim that $\A$, as a relation algebra,  has no complete representation, hence any algebra sharing this 
atom structure is not completely representable, too. Indeed, it is easy to show that if $\A$ and $\B$ 
are atomic relation algebras sharing the same atom structure, so that $\At\A=\At\B$, then $\A$ is completely representable $\iff$ $\B$ is completely representable.
Assume for contradiction that $\A$ has a complete representation $\Mo$.  Let $x, y$ be points in the
representation with $\Mo \models \r_1(x, y)$.  For each $i< 2^{\kappa}$, there is a
point $z_i \in \Mo$ such that $\Mo \models \g_0^i(x, z_i) \wedge \r_1(z_i, y)$.
Let $Z = \set{z_i:i<2^{\kappa}}$.  Within $Z$, each edge is labelled by one of the $\kappa$ atoms in
$\r_+$.  The Erdos-Rado theorem forces the existence of three points
$z^1, z^2, z^3 \in Z$ such that $\Mo \models \r_j(z^1, z^2) \wedge \r_j(z^2, z^3)
\wedge \r_j(z^3, z_1)$, for some single $j<\kappa$.  This contradicts the
definition of composition in $\A$ (since we avoided monochromatic triangles).
Let $S$ be the set of all atomic $\A$-networks $N$ with nodes
$\omega$ such that $\{\r_i: 1\leq i<\kappa: \r_i \text{ is the label
of an edge in $N$}\}$ is finite.
Then it is straightforward to show $S$ is an amalgamation class, that is for all $M, N
\in S$ if $M \equiv_{ij} N$ then there is $L \in S$ with
$M \equiv_i L \equiv_j N$, witness \cite[Definition 12.8]{HHbook} for notation.
Now let $X$ be the set of finite $\A$-networks $N$ with nodes
$\subseteq\kappa$ such that:

\begin{enumerate}
\item each edge of $N$ is either (a) an atom of
$\A$ or (b) a cofinite subset of $\r_+=\set{\r_j:1\leq j<\kappa}$ or (c)
a cofinite subset of $\g_0=\set{\g_0^i:i<2^{\kappa}}$ and

\item  $N$ is `triangle-closed', i.e. for all $l, m, n \in \nodes(N)$ we
have $N(l, n) \leq N(l,m);N(m,n)$.  That means if an edge $(l,m)$ is
labelled by $\sf Id$ then $N(l,n)= N(m,n)$ and if $N(l,m), N(m,n) \leq
\g_0$ then $N(l,n)\cdot \g_0 = 0$ and if $N(l,m)=N(m,n) =
\r_j$ (some $1\leq j<\omega$) then $N(l,n)\cdot \r_j = 0$.
\end{enumerate}
For $N\in X$ let $\widehat{N}\in\Ca(S)$ be defined by
$$\set{L\in S: L(m,n)\leq
N(m,n) \mbox{ for } m,n\in \nodes(N)}.$$
For $i\in \omega$, let $N\restr{-i}$ be the subgraph of $N$ obtained by deleting the node $i$.
Then if $N\in X, \; i<\omega$ then $\widehat{\cyl i N} =
\widehat{N\restr{-i}}$.
The inclusion $\widehat{\cyl i N} \subseteq (\widehat{N\restr{-i})}$ is clear.
Conversely, let $L \in \widehat{(N\restr{-i})}$.  We seek $M \equiv_i L$ with
$M\in \widehat{N}$.  This will prove that $L \in \widehat{\cyl i N}$, as required.
Since $L\in S$ the set $T = \set{\r_i \notin L}$ is infinite.  Let $T$
be the disjoint union of two infinite sets $Y \cup Y'$, say.  To
define the $\omega$-network $M$ we must define the labels of all edges
involving the node $i$ (other labels are given by $M\equiv_i L$).  We
define these labels by enumerating the edges and labeling them one at
a time.  So let $j \neq i < \kappa$.  Suppose $j\in \nodes(N)$.  We
must choose $M(i,j) \leq N(i,j)$.  If $N(i,j)$ is an atom then of
course $M(i,j)=N(i,j)$.  Since $N$ is finite, this defines only
finitely many labels of $M$.  If $N(i,j)$ is a cofinite subset of
$\g_0$ then we let $M(i,j)$ be an arbitrary atom in $N(i,j)$.  And if
$N(i,j)$ is a cofinite subset of $\r_+$ then let $M(i,j)$ be an element
of $N(i,j)\cap Y$ which has not been used as the label of any edge of
$M$ which has already been chosen (possible, since at each stage only
finitely many have been chosen so far).  If $j\notin \nodes(N)$ then we
can let $M(i,j)= \r_k \in Y$ some $1\leq k < \kappa$ such that no edge of $M$
has already been labelled by $\r_k$.  It is not hard to check that each
triangle of $M$ is consistent (we have avoided all monochromatic
triangles) and clearly $M\in \widehat{N}$ and $M\equiv_i L$.  The labeling avoided all
but finitely many elements of $Y'$, so $M\in S$. So
$\widehat{(N\restr{-i})} \subseteq \widehat{\cyl i N}$.

Now let $\widehat{X} = \set{\widehat{N}:N\in X} \subseteq \Ca(S)$.
Then we claim that the subalgebra of $\Ca(S)$ generated by $\widehat{X}$ is simply obtained from
$\widehat{X}$ by closing under finite unions.
Clearly all these finite unions are generated by $\widehat{X}$.  We must show
that the set of finite unions of $\widehat{X}$ is closed under all cylindric
operations.  Closure under unions is given.  For $\widehat{N}\in X$ we have
$-\widehat{N} = \bigcup_{m,n\in \nodes(N)}\widehat{N_{mn}}$ where $N_{mn}$ is a network
with nodes $\set{m,n}$ and labeling $N_{mn}(m,n) = -N(m,n)$. $N_{mn}$
may not belong to $X$ but it is equivalent to a union of at most finitely many
members of $\widehat{X}$.  The diagonal $\diag ij \in\Ca(S)$ is equal to $\widehat{N}$
where $N$ is a network with nodes $\set{i,j}$ and labeling
$N(i,j)=\sf Id$.  Closure under cylindrification is given.
Let $\C$ be the subalgebra of $\Ca(S)$ generated by $\widehat{X}$.
Then $\A = \mathfrak{Ra}(\C)$.
To see why, each element of $\A$ is a union of a finite number of atoms,
possibly a co--finite subset of $\g_0$ and possibly a co--finite subset
of $\r_+$.  Clearly $\A\subseteq\mathfrak{Ra}(\C)$.  Conversely, each element
$z \in \mathfrak{Ra}(\C)$ is a finite union $\bigcup_{N\in F}\widehat{N}$, for some
finite subset $F$ of $X$, satisfying $\cyl i z = z$, for $i > 1$. Let $i_0,
\ldots, i_k$ be an enumeration of all the nodes, other than $0$ and
$1$, that occur as nodes of networks in $F$.  Then, $\cyl
{i_0} \ldots
\cyl {i_k}z = \bigcup_{N\in F} \cyl {i_0} \ldots
\cyl {i_k}\widehat{N} = \bigcup_{N\in F} \widehat{(N\restr{\set{0,1}})} \in \A$.  So $\mathfrak{Ra}(\C)
\subseteq \A$.
$\A$ is relation algebra reduct of $\C\in\CA_\omega$ but has no complete representation.
Let $n>2$. Let $\B=\Nrr_n \C$. Then
$\B\in {\sf Nr}_n\CA_{\omega}$, is atomic, but has no complete representation for plainly a complete representation of $\B$ induces one of $\A$. 
In fact, because $\B$  is generated by its two dimensional elements,
and its dimension is at least three, its
$\Df$ reduct is not completely representable.

It remains to show that the $\omega$--dilation $\C$ is atomless. 
For any $N\in X$, we can add an extra node 
extending
$N$ to $M$ such that $\emptyset\subsetneq M'\subsetneq N'$, so that $N'$ cannot be an atom in $\C$.
\end{proof}
%

\subsection{Complete and other notions of representability}
In the previous construction used in Proposition \ref{bsl}, $\A$ also satisfies the Lyndon conditions by \cite[Theorem 33]{r}
but is not completely representable. Thus:
\begin{corollary}\label{HH} The class $\sf CRRA$ is  not elementary.
\end{corollary}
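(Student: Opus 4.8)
The statement to prove is Corollary \ref{HH}: the class $\sf CRRA$ of completely representable relation algebras is not elementary. My plan is to leverage Theorem \ref{bsl} (here cited as Proposition \ref{bsl}) together with the remark just before the corollary. Theorem \ref{bsl} produces, for any infinite cardinal $\kappa$, an atomless $\C\in\CA_\omega$ whose relation algebra reduct $\A=\mathfrak{Ra}(\C)$ is atomic (with $2^\kappa$ atoms) but has \emph{no} complete representation. The preceding remark supplies the other half: by \cite[Theorem 33]{r}, this same $\A$ satisfies the Lyndon conditions, i.e. $\A\in{\sf LRRA}$, the Lyndon-conditions class of relation algebras, which is \emph{elementary} and which equals ${\bf El}\,{\sf CRRA}$ (the analogue for $\sf RA$ of the fact ${\sf LCA}_n={\bf El}\,{\sf CRCA}_n$ recorded in the excerpt). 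So I have an algebra $\A\in{\bf El}\,{\sf CRRA}\setminus{\sf CRRA}$, which already shows ${\sf CRRA}\neq{\bf El}\,{\sf CRRA}$ and hence that $\sf CRRA$ is not closed under elementary equivalence.

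To make this airtight I would spell out the elementary-equivalence witness. Since $\A\in{\bf El}\,{\sf CRRA}$, there is a completely representable relation algebra $\B$ with $\A\equiv\B$ (one may even take $\B$ to be an elementary subalgebra or elementary extension of some member of $\sf CRRA$ elementarily equivalent to $\A$; concretely, by the Keisler--Shelah theorem $\A$ and a suitable completely representable algebra have isomorphic ultrapowers, but invoking ${\bf El}\,{\sf CRRA}={\sf LRRA}$ directly is cleaner). Then $\B\in{\sf CRRA}$ while $\A\equiv\B$ but $\A\notin{\sf CRRA}$, so $\sf CRRA$ is not elementary. I should note why ${\sf CRRA}\subseteq{\sf LRRA}$: a completely representable relation algebra is atomic, and by the relation-algebra analogue of Proposition \ref{rep} (complete representability corresponds to \pe\ having a \ws\ in the $\omega$-rounded atomic game, hence in each $G_k$), it satisfies every Lyndon condition. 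And ${\sf LRRA}$ is elementary essentially by definition, each Lyndon condition being a first-order sentence, so ${\bf El}\,{\sf CRRA}\subseteq{\sf LRRA}$; the cited \cite[Theorem 33]{r} is precisely what gives $\A\in{\sf LRRA}$ so that $\A$ lands in ${\bf El}\,{\sf CRRA}$ even though it is not completely representable.

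The one genuine content-laden input is that the rainbow/Monk-style algebra $\A$ built from the atom structure $\alpha$ in the proof of Theorem \ref{bsl} does satisfy the Lyndon conditions; this is exactly \cite[Theorem 33]{r}, which in turn rests on \pe\ having winning strategies in all the finite-round games $G_k(\At\A)$ — the Erd\H{o}s--Rado argument only blocks the \emph{infinite}-round game (equivalently, a genuine complete representation), not the finite approximations. I would therefore treat that as a cited fact. The main (and really only) obstacle is bookkeeping: making sure the reader sees that ``not completely representable'' (from the Erd\H{o}s--Rado/forbidden-monochromatic-triangle argument inside Theorem \ref{bsl}) and ``elementarily equivalent to a completely representable algebra'' (from the Lyndon conditions) genuinely coexist in one algebra, which they do because the two properties are separated exactly by the finite-versus-infinite distinction in the atomic game. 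Everything else is a one-line deduction.
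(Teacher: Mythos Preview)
Your proposal is correct and follows exactly the paper's approach: the algebra $\A=\mathfrak{Ra}(\C)$ from Theorem~\ref{bsl} satisfies the Lyndon conditions by \cite[Theorem 33]{r} (hence lies in ${\bf El}\,{\sf CRRA}$) yet is not completely representable, so ${\sf CRRA}$ fails to be elementary. Your additional unpacking of why ${\sf LRRA}={\bf El}\,{\sf CRRA}$ and why the finite-round games are winnable while the $\omega$-round game is not is accurate and helpful, but the core argument is identical to the paper's one-line deduction.
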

But we can go further:
\begin{theorem}\label{raa} 
The class
$\sf CRRA$ is not closed under $\equiv_{\infty, \omega}$. 
\end{theorem}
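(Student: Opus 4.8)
The statement to prove is that $\sf CRRA$ is not closed under $\equiv_{\infty,\omega}$, where $\equiv_{\infty,\omega}$ is elementary equivalence in the infinitary language $L_{\infty,\omega}$. Since $\equiv_{\infty,\omega}$ is a much finer equivalence than ordinary elementary equivalence $\equiv$, and we already know from Corollary~\ref{HH} that $\sf CRRA$ is not elementary, we need to strengthen the witness: it is not enough to produce $\A \notin \sf CRRA$ that is elementarily equivalent to some completely representable algebra; we need $\A \notin \sf CRRA$ that is $L_{\infty,\omega}$-equivalent to a member of $\sf CRRA$. The plan is to use the algebra $\A$ (the term algebra on the atom structure $\alpha$ of blues $\g_0^i$ and reds $\r_j$) together with its complex algebra $\Cm\alpha$. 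These two share the same atom structure, hence by the remark in the proof of Theorem~\ref{bsl} one is completely representable iff the other is; and $\A$ was shown there not to be completely representable. So $\Cm\alpha \notin \sf CRRA$ either.

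The first key step is to produce a \emph{completely representable} algebra $\B$ with $\At\B = \alpha$, or more precisely to find a completely representable relation algebra that is $L_{\infty,\omega}$-equivalent to $\A$ (or to $\Cm\alpha$). The natural candidate is to shrink the number of reds: let $\alpha'$ be the atom structure defined exactly like $\alpha$ but with only \emph{finitely many} (say two, or $\omega$ many — whatever forces complete representability while keeping $L_{\infty,\omega}$-indistinguishability) red atoms, or alternatively with countably many reds. With only countably many (or finitely many) reds the Erd\H{o}s--Rado obstruction used in the proof of Theorem~\ref{bsl} disappears in the relevant place: the amalgamation class $S$ still exists, but now one can build an actual complete representation, so the corresponding term/complex algebra $\B$ lies in $\sf CRRA$. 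The second key step is to verify $\A \equiv_{\infty,\omega} \B$ (equivalently $\Cm\alpha \equiv_{\infty,\omega} \Cm\alpha'$). This is done by exhibiting a back-and-forth system: one plays the Ehrenfeucht--Fra\"\i ss\'e game of infinite length on the two algebras and shows \pe\ has a winning strategy. The point is that any \emph{finite} configuration of atoms/elements touched during a play involves only finitely many reds and finitely many blues on each side, and since both structures have $\geq \omega$ (indeed uncountably) many blues and at least countably many reds, \pe\ can always match a fresh blue with a fresh blue and a fresh red with a fresh red, respecting the forbidden-triple pattern, which is symmetric in the reds and in the blues separately. Thus the two atom structures are $L_{\infty,\omega}$-equivalent, and this lifts to the (complex or term) algebras built uniformly over them.

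Concretely, I would organize it as follows. (i) Define $\alpha'$ as the blue--red atom structure with $2^\kappa$ blues $\g_0^i$ but only $\omega$ many reds $\r_j$ ($j<\omega$), same forbidden triples (all permutations of $(\Id,x,y)$ for $x\neq y$, monochromatic $(\r_j,\r_j,\r_j)$, and monochromatic blue triangles). (ii) Run the argument of Theorem~\ref{bsl} — or simply invoke that with countably many reds there is no obstruction — to get that the term algebra $\A'=\Tm\alpha'$ (and $\Cm\alpha'$, sharing its atom structure) \emph{is} completely representable: here one builds the representation directly by a step-by-step chase using triangle-closed finite networks, exactly as in the construction of $\widehat X$, but now nothing forces a monochromatic red triangle because one always has a fresh red available. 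So $\A' \in \sf CRRA$. (iii) Show $\Cm\alpha \equiv_{\infty,\omega} \Cm\alpha'$ via a back-and-forth argument on the atom structures, using that every finite subconfiguration uses only finitely many colours of each kind and the forbidden-triple relation is invariant under permuting reds among themselves and blues among themselves; this gives $\Tm\alpha \equiv_{\infty,\omega} \Tm\alpha'$ as well. (iv) Conclude: $\A = \Tm\alpha \notin \sf CRRA$ (Theorem~\ref{bsl}) but $\A \equiv_{\infty,\omega} \A' \in \sf CRRA$, so $\sf CRRA$ is not closed under $\equiv_{\infty,\omega}$.

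The main obstacle is step (iii): verifying $L_{\infty,\omega}$-equivalence of the two \emph{algebras} rather than just the atom structures. One must be careful that the algebra is not generated by its atoms in a way that would let an infinitary formula "count" the reds — e.g. a formula asserting the existence of infinitely many pairwise non-composable red-like atoms could in principle distinguish finitely-many-reds from infinitely-many-reds, and "countably many reds" from "uncountably many reds". This is why the right choice of $\alpha'$ matters: one wants $\alpha$ (uncountably many reds) versus $\alpha'$ (still infinitely many reds, but where the representation-building chase succeeds); the cardinality of the red set must be chosen so that no $L_{\infty,\omega}$ sentence separates them while the complete-representation chase goes through on the $\alpha'$ side. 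The cleanest route is to take $\alpha'$ with the \emph{same} $2^\kappa$ blues but with $\omega$ reds, show the back-and-forth works because in any finite play only finitely many reds appear on either side and "there are infinitely many more reds" is all that is needed and is true on both sides — while on the $\alpha'$ side the Erd\H{o}s--Rado cardinal arithmetic of Theorem~\ref{bsl} simply does not apply, so $\A'$ is completely representable. I would double-check that the complete-representation construction genuinely needs the red set to be large (it does: the contradiction in Theorem~\ref{bsl} used $2^\kappa$ blues forcing, via Erd\H{o}s--Rado, a monochromatic red triangle among $\kappa$ colours), and that dropping to $\omega$ reds breaks exactly that and nothing else.
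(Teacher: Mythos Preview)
Your step (ii) contains a genuine error, and it is the opposite of what you need. In the atom structure $\alpha$ of Theorem~\ref{bsl}, the forbidden triples are monochromatic red triples $(\r_j,\r_j,\r_j)$ and \emph{arbitrary} blue (green) triples $(\g_0^i,\g_0^{i'},\g_0^{i^*})$. In any complete representation, fixing an edge $(x,y)$ labelled $\r_1$, for every $i<2^\kappa$ you get a point $z_i$ with $\g_0^i(x,z_i)$; the edges among the $z_i$ are then forced to be \emph{red} (blues cannot occur, since any triangle with two blue edges and apex $x$ is forbidden). So you must colour the edges of a clique on $2^\kappa$ vertices with your red palette while avoiding monochromatic triangles. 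Shrinking the red palette from $\kappa$ to $\omega$ makes this \emph{harder}, not easier: with fewer colours the Erd\H os--Rado (indeed, pigeonhole) obstruction bites sooner, and ``a fresh red is always available'' is simply false with $2^\kappa$ edges and $\omega$ reds. Your $\alpha'$ is therefore still not completely representable, and the argument collapses.

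To salvage the idea one would have to shrink the \emph{blues} to countably many (so that the clique $\{z_i\}$ is countable and can be coloured by countably many reds without monochromatic triangles), and then separately verify that the resulting countable algebra is completely representable; but this then requires real work (a basis/game argument) and is essentially what the paper does in a different setting. The paper's own proof is structurally different: it starts from a three-colour algebra $\R$ (red, yellow, blue, each with countably many indexed atoms and forbidden triples of the shape $(c(i),c(i),c(j))$ for $i\le j$), and obtains $\A$ and $\B$ by \emph{splitting a single atom} $\r(0)$ into $\mathfrak n\ge 2^{\aleph_0}$ versus $\omega$ copies. For $\B=\R^\omega$ it shows $\B\in\Ra\CA_\omega$ by exhibiting a winning strategy in the cylindric-basis game, whence complete representability follows from countability; for $\A=\R^{\mathfrak n}$ non-complete-representability is proved via an Erd\H os--Rado argument using the $2^{\aleph_0}$ split copies. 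The $\equiv_{\infty,\omega}$ equivalence is then a pebble game on the algebras where \pe\ tracks the finite partition of $\r(0)$ induced by the pebbles on each side.
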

\begin{proof} 

Take $\R$ to be a symmetric, atomic relation algebra with atoms
$$\Id, \r(i),
\y(i), \bb(i):i<\omega.$$
Non-identity atoms have colours, $\r$ is red,
$\bb$ is blue, and $\y$ is yellow. All atoms are self-converse.
Composition of atoms is defined
by listing the forbidden triples.
The forbidden triples are (Peircean transforms)
or permutations of $(\Id, x, y)$ for $x\neq y$, and
$$(\r(i), \r(i), \r(j)), \; (\y(i), \y(i), \y(j)), \; (\bb(i), \bb(i), \bb(j))\; \; i\leq j < \omega$$
$\R$ is the complex algebra over this atom structure.
Let $\alpha$ be an ordinal.  $\R^\alpha$ is obtained from $\R$ by
splitting the atom $\r(0)$ into $\alpha$ parts $\r^k(0):k<\alpha$
and then taking the full complex algebra.
In more detail, we put red atoms $r^{k}(0)$ for $k<\alpha.$
In the altered algebra the forbidden triples are
$(\y(i), \y(i), \y(j)), (\bb(i), \bb(i), \bb(j)), \ \   i\leq j<\omega,$
$(\r(i), \r(i), \r(j)),  \ \  0<i\leq j<\omega,$
$(\r^k(0), \r^l(0), \r(j)), \ \   0<j<\omega, k,l<\alpha,$
$(\r^k(0), \r^l(0), \r^m(0)), \ \  k,l,m<\alpha.$
Now let  $\B = \R^\omega$ and $\A=\R^{\mathfrak{n}}$ with $\mathfrak{n}\geq 2^{\aleph_0}$.

For an ordinal $\alpha$,  $\R^{\alpha}$ is as defined in the previous remark.
In $\R^\alpha$, we use the
following abbreviations:
$r(0) = \sum_{k<\alpha}\r^k(0)$
$\r = \sum_{i<\omega}\r(i)$
$\y = \sum_{i<\omega}\y(i)$
$\bb = \sum_{i<\omega}\bb(i).$
These suprema exist because they are taken in the complex algebras which are complete.
The \emph{index} of $\r(i), \y(i)$ and $\bb(i)$ is $i$ and the index of
$\r^k(0)$ is also $0$.
Now let  $\B = \R^\omega$ and $\A=\R^{\mathfrak{n}}$ with $\mathfrak{n}\geq 2^{\aleph_0}$. We claim that
$\B\in \Ra\CA_{\omega}$ and $\A\equiv \B$.
For the first required, we show that $\B$ has a cylindric bases by exhibiting a \ws\ for  \pe\
in the the cylindric-basis game, which is a simpler version of the hyperbasis game
\cite[Definition 12.26]{HHbook}. 
At some stage of the game, let the play so far be
$N_0, N_1, \ldots, N_{t-1}$ for some $t<\omega$.
We say that an edge $(m,n)$ of an atomic network $N$ is a
\emph{diversity edge} if $N(m,n)\cdot   \Id =0$.  Each diversity edge of
each atomic network in the play has an owner --- either \pe\ or \pa,
which we will allocate as we define \pe's strategy.  If an edge
$(m,n)$ belongs to player $p$ then so does the reverse edge $(n,m)$
and we will only specify one of them.  Since our algebra is symmetric, so
the label of the reverse edge is equal to the label of the edge, so again need to specify only one.
For the next round \pe\ must define $N_t$ in response to \pa's move.
If there is an already played network $N_i$
(some $i<t$) and a finitary map $\sigma:\omega\to\omega$ such that
$N_t\sigma$ `answers' his move, then she lets $N_t= N_i\sigma$.
From now on we assume that there is no such $N_i$ and $\sigma$.  We
consider the three types of \pa\ can make.  If he plays an {\it atom move}
by picking an atom $a$, \/ \pe\ plays an atomic network $N$ with
$N(0,1) = a$ and for all $x \in\omega\setminus\set{1}$, $N(0,x) = \Id$.

If \pa\ plays a {\it triangle move} by picking a previously played $N_x$
(some $x<t$), nodes $i, j, k$ with $k\notin\set{i,j}$ and atoms $a, b$
with $a;b \geq N_x(i,j)$, we know that $a, b \neq 1'$, as we are
assuming the \pe\ cannot play an embedding move (if $a = \Id$, consider
$N_x$ and the map $[k/i]$).  \pe\ must
play a network $N_t\equiv_k N_x$ such that $N_t(i, k)= a, \; N_t(k,
j) = b$.  These edges, $(i,k)$ and $(k,j)$, belong to \pa\ in $N_t$.  All
diversity edges not involving $k$ have the same owner in $N_t$ as they
did in $N_x$.  And all edges $(l, k)$ for $k\notin\set{i, j}$ belong
to \pe\ in $N_x$. To label these edges
\pe\ chooses a colour $c$ different than the
colours of $a, b$(we have three colours so this is possible).  Then, one at a time, she labels each edge $(l, k)$
by an atom with colour $c$ and a non-zero index which has not yet been
used to label any edge of any network played in the game.  She does
this one edge at a time, each with a new index.  There are infinitely
many indices to choose, so this can be done.

Finally, \pa\ can play an amalgamation move by picking $M, N \in
\set{N_s:s<t}$, nodes $i, j$ such that $M\equiv_{ij} N$.  If there is
$N_s$ (some $s<t$) and a map $\sigma: \nodes(N_s) \to \nodes(M)\cup
\nodes(N)$ such that $M\equiv_i N_s\sigma\equiv_j N$ then \pe\ lets
$N_t= N_s\sigma$.  Ownership of edges is inherited from $N_s$.  If
there is no such $N_s$ and $\sigma$ then there are two cases.  If
there are three nodes $x, y, z$ in the `amalgam' such that $M(j, x)$
and $N(x, i)$ are both red and of the same index, $M(j, y), N(y, i)$
are both yellow and of the same index and $M(j, z), N(z, i)$ are both
blue and of the same index, then the new edge $(i, j)$ belongs to \pa\
in $N_t$.  It will be labelled by either
$\r^0(0), \bb(0)$ or $\y(0)$ and it it is easy to show that at least one
of these will be a consistent choice.  Otherwise, if there is no such
$x, y, z$ then the new edge $(i, j)$ belongs to \pe\ in $N_t$.  She
chooses a colour $c$ such that there is no $x$ with $M(j, x)$ and
$N(x, i)$ both having colour $c$ and the same index.  And she chooses
a non-zero index for $N_t(i, j)$ which is new to the game (as with
triangle moves).  If $k\neq k' \in M\cap N$ then $(j, k)$ has the same owner
in $N_t$ as it does in $M$, $(k, i)$ has the same owner in $N_t$ as it does in
$N$ and $(k, k')$ belongs to \pe\ in $N_t$ if it belongs to \pe\ in
\emph{either} $M$ or $N$, otherwise it belongs to \pa\ in $N_t$.
Now the only way \pe\ could lose, is if \pa\ played an amalgamation
move $(M, N, i, j)$ such that there are $x, y, z \in M \cap N$ such
that $M(j, x) = \r^k(0), \; N(x,i) = \r^{k'}(0), \; M(j, y) = N(y, i)
= \bb(0)$ and $M(j,z) = N(z, i) = \y(0)$.  But according to \pe's
strategy, she never chooses atoms with index $0$, so all these edges
must have been chosen by \pa.
This contradiction proves the required.

Now, let $\H$ be an $\omega$-dimensional cylindric basis for $\B$. Then  
$\Ca\H\in \CA_{\omega}$.  Consider the
cylindric algebra $\C = \Sg^{\Ca\H}\B$, the subalgebra of $\Ca\H$ generated by $\B$.  In principal, new two dimensional elements that 
were not originally in $\B$,  
can be created in $\C$ using the spare dimensions in $\Ca(\H)$.
But next we exclude this possibility. We show that $\B$ exhausts the $2$--dimensional elements of  $\Ra\C$, more concisely, 
we show that 
$\B=\Ra\C$. For this purpose we want to find out what are the elements of
$\Ca\H$ that are generated by $\B$.
Let $M$ be a (not necessarily atomic) finite network over $\B$ whose nodes are a
finite subset of $\omega$.
\begin{itemize}
\item   Define (using the same noation in the proof of Theorem \ref{bsl})
$\widehat{M} = \set{N\in\H: N\leq M}\in\Ca\H$.  ($N\leq M$ means that
for all $i, j\in M$ we have $N(i, j)\leq M(i, j)$.)

\item A \emph{block} is an element of the form $\widehat{M}$ for some
finite network $M$ such that
\begin{enumerate}
\item $M$ is triangle-closed, i.e. for all $i, j, k\in M$ we have
$M(i, k)\leq M(i,j);M(j,k)$
\item If $x$ is the label of an irreflexive edge of $M$ then $x={\sf Id}$ or
$x\leq \r$ or $x\leq \y$ or $x\leq \bb$ (we say $x$ is
`monochromatic'), and $|\set{i:x\cdot (\r(i)+\y(i)+\bb(i))\neq 0}|$ is
either $0, 1$ or infinite (we say that the number of indices of $x$ is
either $0, 1$ or infinite).
\end{enumerate}
\end{itemize}
We prove:
\begin{enumerate}
\item For any block $\widehat{M}$ and $i<\omega$ we have
\[\cyl i\widehat{M} = (M\restr{\dom(M)\setminus\set{i}})\widehat{\;\;}\]
\item
The domain of $\C$ consists of finite sums of blocks.
\end{enumerate}
$\cyl i
\widehat{M}\subseteq(M\restr{\dom(M)\setminus\set{i}})\widehat{\;\;}$ is
obvious.  If $i\notin M$ the equality is trivial.  Let $N \in
(M\restr{\dom(M)\setminus\set{i}})\widehat{\;\;}$, i.e. $N\leq
M\restr{\dom(M)\setminus\set{i}}$.  We must show that $N\in\cyl
i\widehat{M}$ and for this we must find $L \equiv_i N$ with
$L\in\widehat{M}$.  $L \equiv_i N$ determines every edge of $L$ except
those involving $i$.  For each $j\in M$, if the number of indices in
$M(i,j)$ is just one, say $M(i,j) = \r(k)$, then let $L(i,j)$ be an
arbitrary atom below $\r(k)$.  There should be no inconsistencies in
the labelling so far defined for $L$, by triangle-closure for $M$.
For all the other edges $(i, j)$ if $j \in M$ there are infinitely
many indices in $M(i,j)$ and if $j\notin M$ then we have an
unrestricted choice of atoms for the label.  These edges are labelled
one at a time and each label is given an atom with a new index, thus
avoiding any inconsistencies.  This defines $L \equiv_i N$ with $L\in
\widehat{M}$. For the second part, we already have seen that the set of finite sums
of blocks is closed under cylindrification.  We'll show that this set
is closed under all the cylindric operations and includes $\B$.  For
any $x\in \B$ and $i,j<\omega$, let $N^{ij}_x$ be the $\B$-network
with two nodes $\set{i,j}$ and labelling $N^{ij}_x(i,i) = N^{ij}_x(j,j) = \Id$,
and $N^{ij}(i,j) = x, \; N^{ij}_x(j,i) = \breve{x}$.  Clearly $N^{ij}_x$ is
triangle closed.  And $\widehat{N^{01}_x} = x$.  For any $x \in \B$,
we have $x = x\cdot {\sf Id}+x\cdot \r + x\cdot \y + x\cdot \bb$, so $x
=\widehat{N^{01}_{x\cdot \Id}}+ \widehat{N^{01}_{x\cdot \r}}
+\widehat{N^{01}_{x\cdot \y}} + \widehat{N^{01}_{x\cdot \bb}}$ and the labels of
these four networks are monochromatic.  The first network defines a
block and for each of the last three, if the number if indices is
infinite then it is a block.  If the number of indices is finite then
it is a finite union of blocks.  So every element of $\B$ is a
finite union of blocks.

For the diagonal elements, $\diag{i}{j} = \widehat{N^{ij}_{\sf Id}}$.
Closure under sums is obvious.
For negation, take a block $\widehat{M}$. Then
$ -\widehat{M} = \sum_{i,j\in M} \widehat{N^{ij}_{-N(i,j)}}.$
As before we can replace $\widehat{N^{ij}_{-N(i,j)}}$ by a finite
union of blocks. Thus the set of finite sums of blocks includes $\B$ and the
diagonals and is closed under all the cylindric operations.  Since
every block is clearly generated from $\B$ using substitutions and
intersection only.
It remains to show that $\B = \Ra\C$.  Take a block $\widehat{M}
\in \Ra\C$.  Then $\cyl i\widehat{M} = \widehat{M}$ for $2\leq i <
\omega$.  By the first part of the lemma, $\widehat{M} =
\widehat{M\restr{\set{0,1}}} \in \B$.\\
We finally show that \pe\ has a \ws\ in an \ef-game over $(\A, \B)$ concluding that $\A\equiv_{\infty}\B$.
At any stage of the game,
if \pa\ places a pebble on one of
$\A$ or $\B$, \pe\ must place a matching pebble,  on the other
algebra.  Let $\b a = \la{a_0, a_1, \ldots, a_{n-1}}$ be the position
of the pebbles played so far (by either player) on $\A$ and let $\b
b = \la{b_0, \ldots, b_{n-1}}$ be the the position of the pebbles played
on $\B$.  \pe\ maintains the following properties throughout the
game.
\begin{itemize}
\item For any atom $x$ (of either algebra) with
$x\cdot \r(0)=0$ then $x \in a_i\iff x\in b_i$.
\item $\b a$ induces a finite partion of $\r(0)$ in $\A$ of $2^n$
 (possibly empty) parts $p_i:i<2^n$ and $\b b$ induces a partion of
 $\r(0)$ in $\B$ of parts $q_i:i<2^n$.  $p_i$ is finite iff $q_i$ is
 finite and, in this case, $|p_i|=|q_i|$.
\end{itemize}

Now we show that $\sf CRRA$ is not closed under $\equiv_{\infty, \omega}$. 
Since $\B\in \Ra\CA_{\omega}$ has countably many atoms, 
then $\B$ is completely representable \cite[Theorem 29]{r}.
For this purpose, we show that $\A$ is not completely representable. We work with the term algebra, $\Tm\At\A$, since the latter is completely representable 
$\iff$ the complex algebra is.
Let  $\r = \{\r(i): 1\leq i<\omega\}\cup \{\r^k(0): k<2^{\aleph_0}\}$, $\y = \{\y(i):  i\in \omega\}$, $\bb^+ = \{\bb(i): i\in \omega\}.$
It is not hard to check every element of $\Tm\At\A\subseteq \wp(\At\A)$ has the form  
$F\cup R_0\cup B_0\cup Y_0$, where $F$ is a finite set of atoms, $R_0$ is either empty or a co-finite subset of $\r$, $B_0$ 
is either empty or a co--finite subset of $\bb$, and $Y_0$ is either empty or a co--finite subset 
of $\y$. 
Using an argument similar to that used in the proof of Theorem \ref{bsl}, we show  that the existence of a complete representation necessarily forces a 
monochromatic triangle, that we avoided at the start when defining $\A$.
Let $x, y$ be points in the
representation with $M \models \y(0)(x, y)$.  For each $i< 2^{\aleph_0}$, there is a
point $z_i \in M$ such that $M \models {\sf red}(x, z_i) \wedge \y(0)(z_i, y)$ (some red $\sf red\in \r$).
Let $Z = \set{z_i:i<2^{\aleph_0}}$.  Within $Z$ each edge is labelled by one of the $\omega$ atoms in
$\y^+$ or $\bb^+$.  The Erdos-Rado theorem forces the existence of three points
$z^1, z^2, z^3 \in Z$ such that $M \models \y(j)(z^1, z^2) \wedge \y(j)(z^2, z^3)
\wedge \y(j)(z^3, z_1)$, for some single $j<\omega$  
or three  points $z^1, z^2, z^3 \in Z$ such that $M \models \bb(l)(z^1, z^2) \wedge \bb(l)(z^2, z^3)
\wedge \bb(l)(z^3, z_1)$, for some single $l<\omega$.  
This contradicts the
definition of composition in $\A$ (since we avoided monochromatic triangles).
We have proved that $\sf CRRA$ is not closed under $\equiv_{\infty, \omega}$, since $\A\equiv_{\infty, \omega}\B$, 
$\A$ is not completely representable,   but $\B$ is completely representable.
\end{proof}

\begin{theorem} Any class $\sf K$, such that $\Ra\CA_{\omega}\subseteq \sf K\subseteq \Ra\CA_5$ is not elementary.
\end{theorem}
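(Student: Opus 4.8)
The plan is to show that $\sf K$ is not closed under elementary equivalence; since every elementary class is, this suffices. So I would produce relation algebras $\A$ and $\B$ with $\A\equiv\B$, $\B\in\Ra\CA_\omega$ and $\A\notin\Ra\CA_5$: then $\B\in\sf K$ because $\Ra\CA_\omega\subseteq\sf K$, while $\A\notin\sf K$ because $\sf K\subseteq\Ra\CA_5$, and $\A\equiv\B$ finishes the argument. Both algebras are already at hand from the proof of Theorem~\ref{raa}: take $\B=\R^\omega$ and $\A=\R^{\mathfrak n}$, with $\mathfrak n$ large enough for the Erd\H{o}s--Rado step below (e.g.\ $\mathfrak n=(2^{\aleph_0})^{+}$). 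There it was shown that $\B=\Ra\C$ for a suitable $\C\in\CA_\omega$, so $\B\in\Ra\CA_\omega$, and that $\A\equiv_{\infty,\omega}\B$, hence in particular $\A\equiv\B$. The interval in the statement is genuine: $\Ra\CA_\omega\subseteq\Ra\CA_5$, since if $\B=\Ra\D$ with $\D\in\CA_\omega$ then $\B=\Ra(\mathfrak{Nr}_5\D)$ with $\mathfrak{Nr}_5\D\in\CA_5$, the relation-algebraic operations only using the indices $0,1,2$. So the whole argument comes down to the single claim $\R^{\mathfrak n}\notin\Ra\CA_5$.

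To prove this I would argue by contradiction. Suppose $\R^{\mathfrak n}=\Ra\D$ for some $\D\in\CA_5$; working inside $\D$ (equivalently, inside a $5$-dimensional hyperbasis for $\R^{\mathfrak n}$ of the kind used by Hirsch--Hodkinson, cf.\ \cite{HHbook}) one builds a representation-like structure in which there are points $x,y$ with the edge $(x,y)$ labelled $\y(0)$ and, for every $k<\mathfrak n$, a point $z_k$ with $(x,z_k)$ labelled $\r^k(0)$ and $(z_k,y)$ labelled $\y(0)$ --- this is possible because no forbidden triple of $\R$ (equivalently of $\R^{\mathfrak n}$) has the shape $(\r^k(0),\y(0),\y(0))$, so $\y(0)\le\r^k(0);\y(0)$. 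Inspecting the triangles $x\,z_k\,z_{k'}$ and $y\,z_k\,z_{k'}$ shows that every edge inside $Z=\{z_k:k<\mathfrak n\}$ must be labelled by an atom from the countable set $\{\bb(i):i<\omega\}$ (the first triangle excludes all reds and the identity, the second excludes all $\y(j)$). Since $|Z|=\mathfrak n$ is large, the Erd\H{o}s--Rado partition relation $(2^{\aleph_0})^{+}\to(\aleph_1)^2_{\aleph_0}$ produces three points $z^1,z^2,z^3$ spanning a monochromatic triangle labelled by a single $\bb(j)$ --- and $(\bb(j),\bb(j),\bb(j))$ is a forbidden triple of $\R$. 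This contradiction shows no such $\D$ exists. In effect this is the same mechanism that killed the complete representation of $\R^{\mathfrak n}$ in the proof of Theorem~\ref{raa}, transplanted into a putative five-dimensional dilation instead of a representation.

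The main obstacle is the extraction step: na\"\i vely, an object of dimension $5$ supplies only a bounded number of simultaneous witnesses, whereas the Erd\H{o}s--Rado argument needs the whole family $\{z_k:k<\mathfrak n\}$ at once. The resolution is that the \emph{carrier} of $\D$ (or the hyperbasis) carries no such bound: iterating the cylindrifier and substitution operations one realises, one atom at a time, all $\mathfrak n$ of the $\r^k(0)$ lying below $\c_2(\s^1_2 x\cdot\s^0_2 y)$ for suitable $x,y\in\D$, and the uncountable homogeneous set that feeds Erd\H{o}s--Rado is then recovered by pigeonhole over the (possibly huge) universe of $\D$ rather than over its five formal indices --- this is exactly where the uncountability of $\At\R^{\mathfrak n}$, and the consequent failure of the ``countably many atoms'' hypothesis behind Proposition~\ref{rep}, does the work. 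Granting this extraction, the contradiction from the forbidden triples is immediate. Assembling: $\B=\R^\omega\in\Ra\CA_\omega\subseteq\sf K$, $\A=\R^{\mathfrak n}\notin\Ra\CA_5\supseteq\sf K$, and $\A\equiv\B$, so $\sf K$ is not closed under elementary equivalence and hence not elementary.
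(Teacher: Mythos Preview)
Your overall scaffolding coincides with the paper's: use $\B=\R^\omega\in\Ra\CA_\omega$ and $\A=\R^{\mathfrak n}$, invoke $\A\equiv\B$ from Theorem~\ref{raa}, and reduce everything to the single claim $\A\notin\Ra\CA_5$. That part is fine.

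The gap is precisely in your proof of $\A\notin\Ra\CA_5$. You propose to transplant the Erd\H{o}s--Rado argument that killed complete representability: manufacture ``points'' $z_k$ ($k<\mathfrak n$) inside $\D$ or a $5$--hyperbasis, observe that all edges within $Z$ are blue, and extract a monochromatic triple. But a $\CA_5$ has no points and no edges; its elements are not labelled pairs. A $5$--dimensional hyperbasis consists of networks on five nodes, so there is literally no place to put $\mathfrak n$ distinct $z_k$ simultaneously, and your ``resolution'' paragraph does not close this: iterating cylindrifiers realises each $\r^k(0)$ \emph{separately} as a non-zero element of $\D$, but Erd\H{o}s--Rado needs a single object in which all the $z_k$ coexist with pairwise edge-labels, and no such object is produced. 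The phrase ``recovered by pigeonhole over the universe of $\D$'' is where the argument stops being a proof.

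The paper's actual argument is purely algebraic and structurally different. Assuming $\A=\Ra\C$ with $\C\in\CA_5$, one defines for each $k<\kappa$ an element
\[
x_k=\r^0(0)\cdot \s^1_2\r^k(0)\cdot \s^0_2\y(0)\in\C\setminus\{0\},
\]
proves by computation in $\C$ that $x_0\leq\sum_{j<\omega}{\sf c}_3(\s^2_3x_k\cdot \s_{23}\bb(j))$ for every $k$, and then uses ordinary pigeonhole (not Erd\H{o}s--Rado) on a function $g:\kappa\to\omega$ to extract an infinite set $S_0\subseteq\C$ and an index $i<\omega$ satisfying a list of five algebraic properties. The core of the proof is a \emph{descent}: from any such pair $(S,i)$ one constructs a new pair $(S',i')$ with $i'<i$ enjoying the same five properties, using dimensions $3$ and $4$ to perform the bookkeeping. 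Iterating $i+1$ times contradicts well-foundedness of $\omega$. No monochromatic triangle and no representation-like structure appears; the contradiction is that a strictly decreasing sequence in $\omega$ cannot have length $i+2$. Your sketch does not contain this mechanism, and the Erd\H{o}s--Rado-on-points idea is not a substitute for it.
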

\begin{proof} Using the notation in the proof of the last theorem, where we proved that $\B\in \Ra\CA_{\omega}$ and $\A\equiv \B$; it therefore suffices to show that  
that $\A\notin \Ra\CA_5$. Let $\kappa=2^{\aleph_0}$. We use $\sum$ to denote suprema which 
exists in $\A$. 
Notation, cf. \cite{HH} 13.30. For $\b a\in \;^{<n}n$ we write $\sub{}{\b a}$ for an
arbitrary string of substitutions $w$ such that $\hat{w} = \b a$.
In more detail. Let $n\geq 3$ and $i,j<n$. We define a string of substitutions ${\sf s}_{ij}$:
$${\sf s}_{ij}={\sf s}_i^0{\sf s}_j^1, \text { if } j\neq 0$$
$${\sf s}_{ij}={\sf s}_0^1{\sf s}_i^0 \text { iff } j=0, i\neq 1$$
$${\sf s}_{ij}={\sf s}_0^2{\sf s}_1^0{\sf s}_2^1 \text { iff }j=0, i=1$$

We use that if $\C\in\CA_n, \; i,j,k<n,, \; k \neq i, j$.  Then
$\sub{}{ij}(r;s) = \cyl{k}(\sub{}{ik}r.\sub{}{kj}s)$, for all $r,
s\in\Ra\C$.
Suppose for contradiction that $\A = \Ra\C$ for some $\C\in\CA_5$.  
Then $\cyl2\r^0(0) =\r^0(0) \leq \r^k(0);\y(0) = \cyl2(\sub12\r^k(0)\cdot \sub02\y(0))$, for each $k<\kappa$.
Therefore
\[x_k= \r^0(0)\cdot \sub12\r^k(0)\cdot \sub02\y(0) \neq 0.\]
Then by a tedious computation  
one shows that 
$x_0\leq\sum_{i<\omega}\cyl3(\sub23x_k\cdot \sub{}{23}\bb(i))$.
This holds for all $0<k<\omega_1$.  So,
\begin{eqnarray*}
x_0&\leq&\prod_{0<k<\kappa}\sum_{i<\omega}\cyl3(\sub23x_k\cdot\sub02\sub13b(i))\\
 &=& \sum_{g:\kappa_1\setminus\set{0}\to\omega}\prod_{0<k<\omega_1}\cyl3(\sub23x_k\cdot\sub02\sub13b(g(k)))
\end{eqnarray*}
So there is a function $g:\kappa\setminus\set{0}\to\omega$ such that
\[x_0\; . \; \prod_{0<k<\omega_1}\cyl3(\sub23x_k\cdot \sub02\sub13\bb(g(k))) \neq 0\]
Pick $i<\omega$ such that $X=g^{-1}(i)$ is uncountable, we need an uncountable number of superscripts $k$ at this
point only.  Then
\[ \xi =x_0\; . \; \prod_{k\in X}\cyl3(\sub23x_k\cdot \sub02\sub13\bb(i)) \neq 0\]
Let
\[z_k = \sub23x_k\cdot \sub02\sub13\bb(i)\cdot \xi\]
 for each $k\in
X$.  Let $S_0 = \set{z_k:k\in X}$.  $S_0$ has the following properties.  There is an index $i<\omega$ such that for all $z, x \in S_0$,
\begin{enumerate}
\item $\cyl4z=z$ \label{prop:c4}
\item $\cyl3z = \cyl3x$ \label{prop:c3}
\item  \label{prop:rk}
\begin{enumerate}
\item $\exists k<\omega_1$ such that $z\leq\sub13\r^k(0)$
\item $\forall k<\omega_1$ if $z, x \leq\sub13\r^k(0)$ then $z=x$
\end{enumerate}
\item $z \leq\sub12\r^0(0)\cdot \sub02\y(0)\cdot \sub03\y(0)\cdot \sub02\sub13\bb(i)$  \label{prop:others}
\item $S_0$ is infinite.  \label{prop:inf}
\end{enumerate}
Suppose there is an infinite set $S$ and an index $i<\omega$
with the properties listed above.
We show how to construct another infinite set $S'$ and a new
index $i'<i$ with the same properties.  Iterating this
construction $i+1$ times will then lead to a contradiction
since the index cannot be less than $0$.
Fix $z\in S$.  For each $x\in S\setminus\set{z}$ and $j<i$,
let 
\[\tau^j_x = \cyl4(\sub34x\cdot \sub02\sub14\bb(j)).\]
Then by another tedious computation we have : (**)  for any $x\in S\setminus\set{z}$,
\[ z \leq \sum_{j<i}\tau^j_x \]

We now construct $S'$ and $i'$ from $S$ and $i$ with the
required properties.
By (**)
\[z\leq\prod_{x\in S\setminus{z}}\sum_{j<i}\tau^j_x
=\sum_{g:S\setminus\set{z}\to i}\prod_{x\in
S\setminus\set{z}}\tau^{g(x)}_x\]
Since $z\neq 0$ there is $g:S\setminus\set{z}\to i
\;\;(=\set{0, 1, \ldots, i-1})$ such that $z\cdot\prod_{x\in
S\setminus\set{z}}\tau^{g(x)}_x\neq 0$.  Pick $i'<i$ such
that $X=g^{-1}(i')$ is infinite.  Then
\[ z\cdot \prod_{x\in X}\tau^{i'}_x \neq 0\]
Define
\begin{eqnarray*}
\xi &=&z\cdot \prod_{x\in X}\tau^{i'}_x \;\;\;\neq 0\\
x'&=&\sub34x.\sub03\sub14\bb(i') . \xi \\
x" &=&\sub43\sub32\cyl2x'\\
S'&=&\set{x":x\in X}
\end{eqnarray*}
We check each of the properties.  Property~\ref{prop:c4} is obvious.
By property~\ref{prop:rk} for $S$, if $x\in X$ then there is
$k<\omega_1$ and $x\leq\sub13\r^k(0)$.  So
\begin{eqnarray*}x"&\leq&\sub43\sub32\cyl2\sub34\sub13\r^k(0
)\\
&=&\sub43\sub32\cyl2\sub14\sub31\r^k(0)\\
&=&\sub43\sub32\sub14\cyl2\r^k(0)\\
&=&\sub43\sub14\sub32\r^k(0)\\
&=& \sub13\sub41\r^k(0)\\
&=&\sub13\r^k(0)
\end{eqnarray*}
This gives property~\ref{prop:rk} for $S'$ and shows that $S'$ is
infinite (property~\ref{prop:inf}).
For property~\ref{prop:c3} we first prove that if $x\in S$ then
$\cyl4x' = \xi$.  First note that $\cyl4\xi = \xi$, so
\begin{eqnarray*}\cyl4x'& =& \cyl4(\sub34x\cdot \sub{}{34}\bb(i')\cdot \xi)\\
&=&\cyl4(\sub34x\cdot \sub{}{34}\bb(i'))\cdot \xi \\
&=&\tau^{i'}_x\cdot \xi\\
&=&\xi
\end{eqnarray*}
Hence, 
\begin{eqnarray*}\cyl3x'' &= &\cyl3\sub43\sub32\cyl2x'\\
 &=&\cyl4\sub43\sub32\cyl2x'\\
&=&\cyl4\sub32\cyl2x'\\
&=&\sub32\cyl2\cyl4x'\\
&=&\sub32\cyl2\xi
\end{eqnarray*}
which gives property~\ref{prop:c3}.
 Finally, for property~\ref{prop:others}, we must prove that
$x''\leq\sub12\r^0(0)\cdot\sub02\y(0)\cdot \sub03\y(0)\cdot \sub02\sub13\bb(i)$, for
each $x''\in S'$.  Property~\ref{prop:others} for $S$ says that
$x\leq\sub03\y(0)$ and since $x'\leq\sub34x$ we get
$x'\leq\sub04\y(0)$.  Therefore
\begin{eqnarray*}x''&\leq&\sub43\sub32\cyl2\sub{}{41}\y(0)\\
&=&\sub43\sub32\cyl2\sub04\y(0)\\
&=&\sub43\sub32\sub04\cyl2\y(0)\\
&=&\sub43\sub04\sub32\y(0)\\
&=&\sub03\sub40\y(0)\\
&=&\sub03\y(0)
\end{eqnarray*}
Similarly, we can show that $x''\leq\sub12\r^0(0)\cdot \sub02\y(0)$.  And
$x'\leq\sub03\sub14\bb(i')$ gives $x''\leq\sub43\sub32\cyl2\sub03\sub14\bb(i')\leq\sub02\sub13\bb(i')$.  This proves property~\ref{prop:others}
and we are done.
\end{proof}
\section{Complete representations and non-elementary classes}
In the construction used in Proposition \ref{bsl}, both $\R$ and $\Nr_n\C$ satisfy the Lyndon conditions 
but are not completely representable. Thus:
\begin{corollary}\cite{HH}\label{HH} Let $2<n<\omega$. Then the classes $\sf CRRA$ and ${\sf CRCA}_n$ are not elementary.
\end{corollary}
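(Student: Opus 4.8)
The plan is to derive both non-elementarity statements, uniformly in $n$, from the algebras already constructed in Theorems~\ref{bsl} and~\ref{raa}, exploiting that ${\sf CRCA}_n$ and ${\sf CRRA}$ each lie \emph{strictly} inside their elementary closures, the latter being the respective ``Lyndon condition'' classes, which are elementary by definition.

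First I would treat ${\sf CRCA}_n$. Take the atomless $\C\in\CA_\omega$ supplied by Theorem~\ref{bsl}: for every $2<n<\omega$ the algebra $\Nr_n\C$ is atomic, lies in ${\sf LCA}_n$, yet has no complete representation. Since ${\sf LCA}_n$ is elementary and ${\sf LCA}_n={\bf El}\,{\sf CRCA}_n$ (recorded in the excerpt), $\Nr_n\C$ certifies ${\sf CRCA}_n\subsetneq{\bf El}\,{\sf CRCA}_n$; were ${\sf CRCA}_n$ elementary it would coincide with its elementary closure, hence with ${\sf LCA}_n$, a contradiction. For ${\sf CRRA}$ I would argue in parallel: in the same construction the relation algebra $\A=\Ra\C$ satisfies the Lyndon conditions by \cite[Theorem 33]{r}, while the Erdos--Rado argument in the proof of Theorem~\ref{bsl} shows it has no complete representation. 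The class of $\sf RA$s satisfying all Lyndon conditions is elementary and equals ${\bf El}\,{\sf CRRA}$ (the $\sf RA$ analogue of ${\sf LCA}_n={\bf El}\,{\sf CRCA}_n$, Hirsch--Hodkinson), so the same dichotomy applies and ${\sf CRRA}$ cannot be elementary.

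A second, entirely self-contained route to the ${\sf CRRA}$ case is to read it off Theorem~\ref{raa}: that theorem exhibits $\A\equiv_{\infty,\omega}\B$ with $\B$ completely representable and $\A$ not, and since $\equiv_{\infty,\omega}$ implies ordinary elementary equivalence, ${\sf CRRA}$ is not closed under $\equiv$. I do not expect a real obstacle here: the substantive work --- the simplified rainbow construction, the winning strategy for \pe\ in every finite-round game $G_k$ witnessing the Lyndon conditions, and the Erdos--Rado argument forcing a monochromatic triangle that blocks any complete representation --- has already been done in Theorems~\ref{bsl} and~\ref{raa}. The one point needing a line of care is the bookkeeping fact that ${\bf El}\,{\sf CRRA}$ is exactly the class of relation algebras satisfying the Lyndon conditions, so that ``satisfies the Lyndon conditions but is not completely representable'' genuinely witnesses non-elementarity; for ${\sf CRCA}_n$ this is precisely the equality ${\sf LCA}_n={\bf El}\,{\sf CRCA}_n$ quoted in the excerpt.
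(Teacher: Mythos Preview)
Your proposal is correct and follows essentially the same approach as the paper: the paper's justification is precisely the sentence preceding the corollary, namely that in the construction of Theorem~\ref{bsl} both the relation algebra $\A=\Ra\C$ and the cylindric algebra $\Nrr_n\C$ satisfy the Lyndon conditions yet fail to be completely representable, which together with ${\sf LCA}_n={\bf El}\,{\sf CRCA}_n$ (and its $\sf RA$ analogue) gives the result. Your second route for $\sf CRRA$ via Theorem~\ref{raa} is a legitimate alternative the paper does not invoke at this point, though it uses that theorem later for the stronger $\equiv_{\infty,\omega}$ non-closure.
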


We next strengthen the last theorem. 
We first define  a game $\bold H$ that involves certain {\it hypernetworks}. A  $\lambda$--neat hypernetwork  is roughly 
a network endowed with hyperdeges of length $\neq n$ allowed to get arbitrarily long but are of finite length, and such hyperedges 
get their labels from a non--empty set of labels $\Lambda$; such that all so--called {\it short hyperedges} are constantly labelled by $\lambda\in \Lambda$.
The board of the game consists of  $\lambda$-neat hypernetworks: 

\begin{definition}\label{hypernetwork} For an $n$--dimensional atomic network $N$  on an atomic $\CA_n$ and for  $x,y\in \nodes(N)$, set  $x\sim y$ if
there exists $\bar{z}$ such that $N(x,y,\bar{z})\leq {\sf d}_{01}$.
Define the  equivalence relation $\sim$ over the set of all finite sequences over $\nodes(N)$ by
$\bar x\sim\bar y$ iff $|\bar x|=|\bar y|$ and $x_i\sim y_i$ for all
$i<|\bar x|$. (It can be easily checked that this indeed an equivalence relation).
A \emph{ hypernetwork} $N=(N^a, N^h)$ over an atomic $\CA_n$
consists of an $n$--dimensional  network $N^a$
together with a labelling function for hyperlabels $N^h:\;\;^{<
\omega}\!\nodes(N)\to\Lambda$ (some arbitrary set of hyperlabels $\Lambda$)
such that for $\bar x, \bar y\in\; ^{< \omega}\!\nodes(N)$
if $\bar x\sim\bar y \Rightarrow N^h(\bar x)=N^h(\bar y).$
If $|\bar x|=k\in \N$ and $N^h(\bar x)=\lambda$, then we say that $\lambda$ is
a $k$-ary hyperlabel. $\bar x$ is referred to as a $k$--ary hyperedge, or simply a hyperedge.
A hyperedge $\bar{x}\in {}^{<\omega}\nodes(N)$ is {\it short}, if there are $y_0,\ldots, y_{n-1}$
that are nodes in $N$, such that
$N(x_i, y_0, \bar{z})\leq {\sf d}_{01}$
or $\ldots N(x_i, y_{n-1},\bar{z})\leq {\sf d}_{01}$
for all $i<|x|$, for some (equivalently for all)
$\bar{z}.$ Otherwise, it is called {\it long.}
This game involves, besides the standard 
cylindrifier move,  
two new amalgamation moves.
Concerning his moves, this game with $m$ rounds ($m\leq \omega$), call it  $\bold H_m$, \pa\ can play a cylindrifier move, like before but now played on $\lambda$---
neat hypernetworks ($\lambda$ a constant label).
Also \pa\ can play a \emph{transformation move} by picking a
previously played hypernetwork $N$ and a partial, finite surjection
$\theta:\omega\to\nodes(N)$, this move is denoted $(N, \theta)$.  \pe's
response is mandatory. She must respond with $N\theta$.
Finally, \pa\ can play an
\emph{amalgamation move} by picking previously played hypernetworks
$M, N$ such that
$M\restr {\nodes(M)\cap\nodes(N)}=N\restr {\nodes(M)\cap\nodes(N)},$
and $\nodes(M)\cap\nodes(N)\neq \emptyset$.
This move is denoted $(M,
N).$
To make a legal response, \pe\ must play a $\lambda_0$--neat
hypernetwork $L$ extending $M$ and $N$, where
$\nodes(L)=\nodes(M)\cup\nodes(N)$.
\end{definition}

\begin{theorem}\label{gripneat} Let $\alpha$ be a countable atom structure. If \pe\ has a \ws\ in $\bold H_{\omega}(\alpha)$, 
then there exists a complete $\D\in \RCA_{\omega}$ such that 
$\Cm\alpha\cong \Nrr_n\D$ and $\alpha\cong \At\Nrr_n\D$. 
In particular, $\Cm\alpha\in \Nr_n\CA_{\omega}$ 
and $\alpha\in \At\Nr_{n}\CA_{\omega}$.  
\end{theorem}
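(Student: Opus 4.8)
The plan is to build the dilation $\D$ directly from the winning strategy, using the standard ``networks-as-atoms'' machinery of Hirsch--Hodkinson adapted to the hypernetwork setting. First I would let $\pe$ play out the game $\bold H_\omega(\alpha)$ against a ``persistent'' $\pa$ who systematically schedules every cylindrifier move, every transformation move and every amalgamation move that can legally be demanded; because the atom structure $\alpha$ is countable and the set of finite partial maps $\omega\to\omega$ is countable, such a schedule exists and produces a countable set $H$ of $\lambda_0$-neat hypernetworks that is closed (up to the moves) under cylindrifiers, under the transformations $N\mapsto N\theta$, and under amalgamation of hypernetworks agreeing on their common nodes. This $H$ is then an $\omega$-dimensional hyperbasis for $\Cm\alpha$ in the sense of \cite[Definition 12.26]{HHbook}, and I would set $\D=\Ca H$, the complex algebra of the hyperbasis, which lies in $\CA_\omega$; the standard argument (as in \cite{HHbook2}) that a hyperbasis yields a representation shows $\D\in\RCA_\omega$, and $\D$ is complete because it is a complex algebra of an atom structure.

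Next I would identify $\Nrr_n\D$ with $\Cm\alpha$. For each atom $a\in\alpha$ let $\widehat a\in\D$ be the set of hypernetworks $N\in H$ with $N(0,1,\dots,n-1)=a$ (more precisely, with the $n$-ary edge on the first $n$ nodes labelled $a$), and extend additively to a map $\Cm\alpha\to\D$. One checks this map is a $\CA_n$-embedding onto $\Nrr_n\D$: that its image consists exactly of the elements $x\in\D$ with $\cyl i x=x$ for all $n\le i<\omega$ is the crux, and it follows from the transformation and amalgamation moves exactly as the block analysis in the proof of Theorem~\ref{raa} shows that the $2$-dimensional (there, $\Ra$-) elements are precisely the ``old'' ones --- here the short hyperedges being constantly labelled $\lambda_0$ is what prevents the spare dimensions from creating genuinely new $n$-dimensional elements. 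That the map is an isomorphism of atom structures, so $\alpha\cong\At\Nrr_n\D$, then follows since the singleton-like elements $\widehat a$ are the atoms of $\Nrr_n\D$ and composition/diagonals are respected by construction of $H$. Finally, $\Cm\alpha\cong\Nrr_n\D$ with $\D$ complete gives $\Cm\alpha\in\Nr_n\CA_\omega$ and $\alpha\cong\At\Nrr_n\D\in\At\Nr_n\CA_\omega$, which is the ``in particular'' clause.

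The main obstacle, and the step deserving the most care, is proving that $\Nrr_n\D=\{\widehat a\text{-combinations}\}$ rather than something strictly larger --- i.e. that no element fixed by all $\cyl i$ with $i\ge n$ escapes the image of $\Cm\alpha$. This is where one must use that $\pe$'s responses to transformation moves are \emph{mandatory} (so $H$ is genuinely closed under the maps $\theta$, including those collapsing or permuting nodes above $n$) together with the amalgamation move to ``glue'' witnesses; the argument mirrors the computation ``$\cyl i\widehat M=(M\!\restriction\!{\dom(M)\setminus\{i\}})\widehat{\ }$'' and the ensuing ``$\widehat M\in\Ra\C\Rightarrow\widehat M=\widehat{M\restriction\{0,1\}}$'' in the proof of Theorem~\ref{raa}, but now carried out for $n$-dimensional neat reducts and with hyperlabels, so one must additionally verify that the hyperedge labels of any neat-reduct element are determined by its restriction to $n$ nodes --- which is exactly the content of the short-hyperedge constancy condition in Definition~\ref{hypernetwork}. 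Everything else (completeness of $\D$, the embedding being well-defined and respecting the $\CA_n$ operations, representability via the hyperbasis) is routine once $H$ has been constructed with the required closure properties.
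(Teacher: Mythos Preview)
Your approach differs from the paper's in a fundamental way. The paper does not build $\D$ as $\Ca H$ for a hyperbasis $H$ assembled from a single play. Instead, for each atom $a\in\alpha$ it uses \pe's \ws\ to construct a nested chain $M_0\subseteq M_1\subseteq\cdots$ of $\lambda$--neat hypernetworks (with closure under cylindrifier--witnessing and under extension of partial isomorphisms built in via the amalgamation move), takes the limit $\M_a=\bigcup_i M_i$, and then works semantically in $L_{\infty,\omega}$: the weak set algebra $\D_a$ has universe $\{\phi^{{\cal M}_a}:\phi\in L_{\infty,\omega}\}$, and $\D=\bold P_{a\in\alpha}\D_a$. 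The use of $L_{\infty,\omega}$ rather than first order logic is exactly what makes each $\D_a$, and hence $\D$, \emph{complete}. The identification $\Nrr_n\D\cong\Cm\alpha$ is established by a model--theoretic argument: given nonzero $x\in\Nrr_n\D$ one picks $f$ in some nonzero component $\pi_a(x)$, reads off the atom $b=\M_a(f(0),\ldots,f(n-1))$, and uses the partial--isomorphism extension property of $\M_a$ to show $\iota_a(b(x_0,\ldots,x_{n-1})^{\D_a})\leq x$. The short--hyperedge constancy is used right there, to see that the map $\{(f(i),g(i)):i<n\}$ is already a partial isomorphism of $\M_a$ whenever $f,g$ realise the same atom on $(0,\ldots,n-1)$.

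Your ``block analysis'' plan, imported from Theorems~\ref{bsl} and~\ref{raa}, has a genuine gap in this setting. In those theorems the algebra under analysis was the \emph{subalgebra} $\C$ of $\Ca(S)$ (resp.\ $\Ca(\H)$) generated by the blocks, and the decisive fact was that every element of $\C$ is a \emph{finite} union of blocks; that finiteness is what drives the reduction $\widehat M\in\Ra\C\Rightarrow\widehat M=\widehat{M\restr{\{0,1\}}}$. Your $\D=\Ca H$ is the \emph{full} complex algebra, so an element of $\Nrr_n\D$ is an arbitrary subset of $H$ fixed by all $\cyl i$ with $i\geq n$, and the finite--union calculus no longer applies. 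To conclude that such an element is a join of $\widehat a$'s you need atomicity of $\Nrr_n\Ca H$ with the correct atoms, and for that one is forced back to the partial--isomorphism extension property---i.e.\ to the paper's argument, not the block calculus. There is also a smaller technical issue: the hypernetworks produced in a play of $\bold H_\omega$ have finite node sets, so $H$ is not literally an $\omega$--dimensional hyperbasis in the sense you cite; one must pass to limits (as the paper does) before anything like $\Ca H$ becomes a $\CA_\omega$.
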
 
\begin{proof} 
Fix some $a\in\alpha$. The game $\bold H_{\omega}$ is designed so that using \pe\ s \ws\ in the game $\bold H_{\omega}(\alpha)$ 
one can define a
nested sequence $M_0\subseteq M_1,\ldots$ of $\lambda$--neat hypernetworks
where $M_0$ is \pe's response to the initial \pa-move $a$, such that:
If $M_r$ is in the sequence and $M_r(\bar{x})\leq {\sf c}_ia$ for an atom $a$ and some $i<n$,
then there is $s\geq r$ and $d\in\nodes(M_s)$
such that  $M_s(\bar{y})=a$,  $\bar{y}_i=d$ and $\bar{y}\equiv_i \bar{x}$.
In addition, if $M_r$ is in the sequence and $\theta$ is any partial
isomorphism of $M_r$, then there is $s\geq r$ and a
partial isomorphism $\theta^+$ of $M_s$ extending $\theta$ such that
$\rng(\theta^+)\supseteq\nodes(M_r)$ (This can be done using \pe's responses to amalgamation moves).
Now let $\M_a$ be the limit of this sequence, that is $\M_a=\bigcup M_i$, the labelling of $n-1$ tuples of nodes
by atoms, and hyperedges by hyperlabels done in the obvious way using the fact that the $M_i$s are nested.
Let $L$ be the signature with one $n$-ary relation for
each $b\in\alpha$, and one $k$--ary predicate symbol for
each $k$--ary hyperlabel $\lambda$.
{\it Now we work in $L_{\infty, \omega}.$}
For fixed $f_a\in\;^\omega\!\nodes(\M_a)$, let
$\U_a=\set{f\in\;^\omega\!\nodes(\M_a):\set{i<\omega:g(i)\neq
f_a(i)}\mbox{ is finite}}$.
We  make $\U_a$ into the base of an $L$ relativized structure 
${\cal M}_a$ like in \cite[Theorem 29]{r} except that we allow a clause for infinitary disjunctions.
In more detail,  for $b\in\alpha,\; l_0, \ldots, l_{n-1}, i_0 \ldots, i_{k-1}<\omega$, \/ $k$--ary hyperlabels $\lambda$,
and all $L$-formulas $\phi, \phi_i, \psi$, and $f\in U_a$:
\begin{eqnarray*}
{\cal M}_a, f\models b(x_{l_0}\ldots,  x_{l_{n-1}})&\iff&{\cal M}_a(f(l_0),\ldots,  f(l_{n-1}))=b,\\
{\cal M}_a, f\models\lambda(x_{i_0}, \ldots,x_{i_{k-1}})&\iff&  {\cal M}_a(f(i_0), \ldots,f(i_{k-1}))=\lambda,\\
{\cal M}_a, f\models\neg\phi&\iff&{\cal M}_a, f\not\models\phi,\\
{\cal M}_a, f\models (\bigvee_{i\in I} \phi_i)&\iff&(\exists i\in I)({\cal M}_a,  f\models\phi_i),\\
{\cal M}_a, f\models\exists x_i\phi&\iff& {\cal M}_a, f[i/m]\models\phi, \mbox{ some }m\in\nodes({\cal M}_a).
\end{eqnarray*}
For any such $L$-formula $\phi$, write $\phi^{{\cal M}_a}$ for
$\set{f\in\U_a: {\cal M}_a, f\models\phi}.$
Let
$D_a= \set{\phi^{{\cal M}_a}:\phi\mbox{ is an $L$-formula}}$ and
$\D_a$ be the weak set algebra with universe $D_a$. 
Let $\D=\bold P_{a\in \alpha} \D_a$. Then $\D$ is a  generalized {\it complete} weak set algebra \cite[Definition 3.1.2 (iv)]{HMT2}.
Now we show that $\alpha\cong \At\mathfrak{Nr}_n\D$ and $\Cm\alpha\cong \mathfrak{Nr}_n\D$.
Let $x\in \D$. Then $x=(x_a:a\in\alpha)$, where $x_a\in\D_a$.  For $b\in\alpha$ let
$\pi_b:\D\to \D_b$ be the projection map defined by
$\pi_b(x_a:a\in\alpha) = x_b$.  Conversely, let $\iota_a:\D_a\to \D$
be the embedding defined by $\iota_a(y)=(x_b:b\in\alpha)$, where
$x_a=y$ and $x_b=0$ for $b\neq a$.  
Suppose $x\in\Nrr_n\D\setminus\set0$.  Since $x\neq 0$,
then it has a non-zero component  $\pi_a(x)\in\D_a$, for some $a\in \alpha$.
Assume that $\emptyset\neq\phi(x_{i_0}, \ldots, x_{i_{k-1}})^{\D_a}= \pi_a(x)$, for some $L$-formula $\phi(x_{i_0},\ldots, x_{i_{k-1}})$.  We
have $\phi(x_{i_0},\ldots, x_{i_{k-1}})^{\D_a}\in\Nrr_{n}\D_a$.
Pick
$f\in \phi(x_{i_0},\ldots, x_{i_{k-1}})^{\D_a}$  
and assume that ${\cal M}_a, f\models b(x_0,\ldots x_{n-1})$ for some $b\in \alpha$.
We show that
$b(x_0, x_1, \ldots, x_{n-1})^{\D_a}\subseteq
 \phi(x_{i_0},\ldots, x_{i_{k-1}})^{\D_a}$.  
Take any $g\in
b(x_0, x_1\ldots, x_{n-1})^{\D_a}$,
so that ${\cal M}_a, g\models b(x_0, \ldots x_{n-1})$.  
The map $\{(f(i), g(i)): i<n\}$
is a partial isomorphism of ${\cal M}_a.$ Here that short hyperedges are constantly labelled by $\lambda$ 
is used.
This map extends to a finite partial isomorphism
$\theta$ of $M_a$ whose domain includes $f(i_0), \ldots, f(i_{k-1})$.
Let $g'\in {\cal M}_a$ be defined by
\[ g'(i) =\left\{\begin{array}{ll}\theta(i)&\mbox{if }i\in\dom(\theta)\\
g(i)&\mbox{otherwise}\end{array}\right.\] 
We have ${\cal M}_a,
g'\models\phi(x_{i_0}, \ldots, x_{i_{k-1}})$. But 
$g'(0)=\theta(0)=g(0)$ and similarly $g'(n-1)=g(n-1)$, so $g$ is identical
to $g'$ over $n$ and it differs from $g'$ on only a finite
set.  Since $\phi(x_{i_0}, \ldots, x_{i_{k-1}})^{\D_a}\in\Nrr_{n}\D_a$, we get that
${\cal M}_a, g \models \phi(x_{i_0}, \ldots,
x_{i_k})$, so $g\in\phi(x_{i_0}, \ldots, x_{i_{k-1}})^{\D_a}$ (this can be proved by induction on quantifier depth of formulas).  
This
proves that 
$$b(x_0, x_1\ldots x_{n-1})^{\D_a}\subseteq\phi(x_{i_0},\ldots,
x_{i_k})^{\D_a}=\pi_a(x),$$ and so
$$\iota_a(b(x_0, x_1,\ldots x_{n-1})^{\D_a})\leq
\iota_a(\phi(x_{i_0},\ldots, x_{i_{k-1}})^{\D_a})\leq x\in\D_a\setminus\set0.$$
Now every non--zero element 
$x$ of $\Nrr_{n}\D_a$ is above a non--zero element of the following form 
$\iota_a(b(x_0, x_1,\ldots, x_{n-1})^{\D_a})$
(some $a, b\in \alpha$) and these are the atoms of $\Nrr_{n}\D_a$.  
The map defined  via $b \mapsto (b(x_0, x_1,\dots, x_{n-1})^{\D_a}:a\in \alpha)$ 
is an isomorphism of atom structures, 
so that $\alpha\in \At{\sf Nr}_n\CA_{\omega}$.  
Let  $X\subseteq \mathfrak{Nr}_n\D$. Then by completeness of $\D$, we get that
$d=\sum^{\D}X$ exists.  Assume that  $i\notin n$, then
${\sf c}_id={\sf c}_i\sum X=\sum_{x\in X}{\sf c}_ix=\sum X=d,$
because the ${\sf c}_i$s are completely additive and ${\sf c}_ix=x,$
for all $i\notin n$, since $x\in \mathfrak{Nr}_n\D$.
We conclude that $d\in \mathfrak{Nr}_n\D$, hence $d$ is an upper bound of $X$ in $\mathfrak{Nr}_n\D$. Since 
$d=\sum_{x\in X}^{\D}X$ there can be no $b\in \mathfrak{Nr}_n\D$ $(\subseteq \D)$ with $b<d$ such that $b$ is an upper bound of $X$ for else it will be an upper bound of $X$ in $\D$. 
Thus $\sum_{x\in X}^{\mathfrak{Nr}_n\D}X=d$ 
We have shown that   
$\mathfrak{Nr}_n\D$ is complete. 
Making the legitimate identification  
$\mathfrak{Nr}_n\D\subseteq_d \Cm\alpha$ by density,
we get that  $\mathfrak{Nr}_n\D=\Cm\alpha$ 
(since $\mathfrak{Nr}_n\D$ is complete),  
hence $\Cm\alpha\in {\sf Nr}_n\CA_{\omega}$. 
\end{proof}

If $\B$ is a Boolean algebra and $b\in \B$, 
then $\Rl_b\B$ denotes the Boolean algebra with domain $\{x\in B: x\leq b\}$, top element $b$, and other Boolean operations those of $\B$ relativized to $b$.
\begin{lemma}\label{join} 
In the following $\A$ and $\D$ are Boolean algebras.
\begin{enumerate}
  
\item  If $\A$  is atomic  and $0\neq a\in \A$, then $\Rl_a\A$ is also atomic. 
If $\A\subseteq_d \D$, and $a\in A$, then $\Rl_a\A\subseteq_d \Rl_a\D$,

\item  If $\A\subseteq_d \D$ then $\A\subseteq_c \D$. In particular, for any class $\sf K$ of $\sf BAOs$, ${\sf K}\subseteq \bold S_d{\sf K}\subseteq \bold  S_c{\sf K}$. 
If furthermore $\A$ and $\D$ are atomic,  then $\At\D\subseteq \At\A$. 
\end{enumerate}
\end{lemma}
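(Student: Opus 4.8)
The plan is to dispatch the two items in turn; both are purely Boolean facts, and in the $\sf BAO$ formulation appearing in item (2) the operators play no role beyond the requirement that subalgebras be taken in the full signature.

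For item (1), I first establish atomicity of $\Rl_a\A$ by transporting atoms down the inclusion into the relativization: given $0\neq x\leq a$ in $\A$, atomicity of $\A$ produces an atom $b$ of $\A$ with $b\leq x$; since $b\leq a$ we have $b\in\Rl_a A$, and $b$ remains an atom in $\Rl_a\A$ because the latter carries the restriction of the order of $\A$. For the density clause I would begin by noting that $\Rl_a\A$ is a subalgebra of $\Rl_a\D$---the only closure condition worth a remark is closure under the relative complement $x\mapsto a\cdot -x$, which lands back in $A$ exactly because $a\in A$---and then, given $0\neq d\leq a$ in $\D$, invoke $\A\subseteq_d\D$ to obtain $0\neq b\in A$ with $b\leq d$; as $b\leq d\leq a$ this $b$ lies in $\Rl_a A$ and witnesses density of $\Rl_a\A$ in $\Rl_a\D$.

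The substance of item (2) is the implication $\A\subseteq_d\D\Rightarrow\A\subseteq_c\D$, that is, that a dense subalgebra preserves every supremum which exists in it. Let $X\subseteq A$ be such that $s=\sum^\A X$ exists; then $s$ is an upper bound of $X$ in $\D$, so I need only exclude a strictly smaller one. If $t\in D$ is an upper bound of $X$ with $s\cdot -t\neq 0$, density yields $0\neq b\in A$ with $b\leq s\cdot -t$; then $s\cdot -b\in A$ is still an upper bound of $X$, since every $x\in X$ satisfies $x\leq s$ and $x\leq t\leq -b$, while $s\cdot -b<s$ because $0\neq b\leq s$---contradicting $s=\sum^\A X$. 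Hence $\sum^\D X=s$ and $\A\subseteq_c\D$. Reading this off on Boolean reducts (on which $\subseteq_d$ and $\subseteq_c$ are defined for $\sf BAO$s, subalgebras being full-signature) gives the chain $\K\subseteq\bold S_d\K\subseteq\bold S_c\K$, since every algebra is dense in itself and a dense subalgebra of a member of $\K$ is thereby a member of $\bold S_c\K$. Finally, for $\At\D\subseteq\At\A$: if $d$ is an atom of $\D$, density gives $0\neq b\in A$ with $b\leq d$, which forces $b=d$ by atomicity of $d$, so $d\in A$; and any $0\neq c\in A$ with $c\leq d$ equals $d$ for the same reason, so $d$ is an atom of $\A$.

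I expect no genuine obstacle; the one step calling for a little care is the density-implies-regularity argument in item (2), where one must produce the competing upper bound $s\cdot -b$ inside $\A$ and verify that it sits strictly below $s$. The rest is routine bookkeeping about relativizations and about moving atoms across the inclusion.
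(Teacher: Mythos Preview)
Your proof is correct and follows essentially the same approach as the paper's. The only notable difference is in item (2): the paper argues only the special case where $\sum^{\A}S=1$ is preserved in $\D$ (which is all its definition of $\bold S_c$ requires), whereas you prove the more general regularity statement that an arbitrary supremum $s=\sum^{\A}X$ remains the supremum in $\D$; your device of producing the competing upper bound $s\cdot(-b)\in A$ is the natural generalisation of the paper's argument, and for the atom inclusion your observation that atomicity of $\A$ is not actually needed (atomicity of $d$ in $\D$ alone forces any nonzero $c\in A$ below $d$ to equal $d$) is a slight sharpening of the paper's reasoning.
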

\begin{proof}
(1): Let $b\in \Rl_a\D$ be non--zero. Then $b\leq a$ and $b$ is non-zero in $\D$. By atomicity of $\D$ there is an atom $c$ of $\D$ such that $c\leq b$. 
So $c\leq b\leq a$, thus $c\in \Rl_a\D$. Also $c$ is an atom in $\Rl_a\D$ because if not, then it will not be an atom in $\D$.  
The second part is similar.

(2): Assume that $\sum^{\A}S=1$ and for contradiction that there exists $b'\in \D$, $b'<1$ such that 
$s\leq b'$ for all $s\in S$. Let $b=1-b'$ then $b\neq 0$, hence by assumption (density) there exists a non-zero  
$a\in \A$ such that $a\leq b$, i.e $a\leq (1- b')$. If $a\cdot s\neq 0$ for some $s\in S$, then $a$ is not less than $b'$ which is impossible.
So $a\cdot s=0$  for every $s\in S$, implying that $a=0$, contradiction.
Now we prove the second part. Assume that $\A\subseteq_d \D$ and $\D$ is atomic. Let $b\in \D$ be an atom. We show that $b\in \At\A$.
By density there is a 
non--zero $a'\in \A$, such that $a'\leq b$ in $\D$. Since $\A$ is atomic, there is an atom $a\in \A$ such that $a\leq a'\leq b$. 
But $b$ is an atom of  $\D$,  and $a$ is non--zero in $\D$, too, so it must be the case that $a=b\in \At\A$.
Thus $\At\B\subseteq \At\A$ and we are done.
\end{proof}

\begin{theorem}\label{iii}
Let $2<n<\omega$. Then any class $\bold K$ such that $\Nr_n\CA_{\omega}\cap {\sf CRCA}_n\subseteq {\bold K}\subseteq \bold S_c\Nr_n\CA_{n+3}$, $\bold K$  
is not elementary. 
\end{theorem}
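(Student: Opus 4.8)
The plan is to reduce the statement to exhibiting two atomic algebras $\A,\B\in\CA_n$ with $\A\equiv\B$ (indeed $\A\equiv_{\infty,\omega}\B$) such that $\B\in\Nr_n\CA_\omega\cap\CRCA_n$ while $\A\notin\bold S_c\Nr_n\CA_{n+3}$. Granting this, any $\bold K$ with $\Nr_n\CA_\omega\cap\CRCA_n\subseteq\bold K\subseteq\bold S_c\Nr_n\CA_{n+3}$ has $\B\in\bold K$ and $\A\notin\bold K$, so $\bold K$ is not closed under elementary equivalence and hence not elementary. Following the pattern of the relation algebra construction in Theorem~\ref{raa} and the theorem after it, I would build $\A$ and $\B$ as complex algebras of atom structures obtained from a single finite ``seed'' atom structure by splitting one distinguished atom into $\mu$ congruent copies: $\A$ the $\mu\geq 2^{\aleph_0}$ splitting, and $\B$ the $\mu=\omega$ splitting.

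For $\B\in\Nr_n\CA_\omega\cap\CRCA_n$, I would first show that the $\omega$--split atom structure $\beta$ admits a winning strategy for \pe\ in the hypernetwork game $\bold H_\omega(\beta)$: the labelling room supplied by the $\omega$ fresh copies of the split atom, together with an infinite supply of fresh ``new index'' colours, lets her answer the cylindrifier, transformation and amalgamation moves indefinitely. Theorem~\ref{gripneat} then produces a complete $\D\in\RCA_\omega$ with $\Cm\beta\cong\Nr_n\D$, so $\B:=\Cm\beta\in\Nr_n\CA_\omega$; since $\beta$ is countable, $\B$ is atomic with countably many atoms, and her strategy in particular wins $G_\omega(\At\B)$, so $\B\in\CRCA_n$ by Proposition~\ref{rep}. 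Thus $\B\in\Nr_n\CA_\omega\cap\CRCA_n\subseteq\bold K$.

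For $\A\notin\bold S_c\Nr_n\CA_{n+3}$, I would show that on the $2^{\aleph_0}$--split algebra $\A$ player \pa\ has a winning strategy in the finite--node game $\bold G^{n+3}(\At\A)$; as $\A$ is atomic (being a complex algebra), the contrapositive of Lemma~\ref{n} taken with $m=n+3>n$ yields the required non-membership, so $\A\notin\bold K$. The heart of this step is a ``shrinking index'' strategy for \pa: over $\omega$ rounds he uses the $n+3$ nodes to repeatedly force \pe\ into committing to one of the $2^{\aleph_0}$ many split copies, and a pigeonhole/elementary counting step --- the game analogue of the algebraic computation showing $\R^{\mathfrak n}\notin\Ra\CA_5$, which already drives an index $i<\omega$ down to $0$ in finitely many stages --- extracts from her replies a strictly descending $\omega$--sequence of colour indices lying in $\omega$, a contradiction. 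The constant $n+3$ is forced here: \pa\ needs enough nodes to carry, inside a single network, one split copy together with the two auxiliary colours by which he passes between networks, and $n+3$ is exactly what suffices while still leaving \pe\ a winning strategy in every finite--round game $G_k(\At\A)$, so that the elementarily equivalent $\B$ can nevertheless sit in the small class.

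Finally I would establish $\A\equiv_{\infty,\omega}\B$ by a direct \ef\ game on the pair $(\A,\B)$, exactly as in the closing part of the proof of Theorem~\ref{raa}: \pe\ keeps the pebbles agreeing on all atoms disjoint from the split atom and inducing matching finite partitions of the split atom, with finite blocks paired with finite blocks of equal cardinality and infinite blocks with infinite ones; the only way she could be beaten is if \pa\ named a finite Boolean pattern separating an $\omega$--indexed family of copies from a $2^{\aleph_0}$--indexed one, and no such pattern is nameable by the finitely many Boolean combinations the pebbles express. I expect the main obstacle to be calibrating the seed atom structure so that one and the same splitting behaves in these two opposite ways --- \pa\ winning $\bold G^{n+3}$ on the uncountable split, which is what pins down the bound $n+3$, and \pe\ winning the far richer game $\bold H_\omega$ on the countable split --- together with carrying the back-and-forth invariant for $\A\equiv\B$ through; Theorem~\ref{gripneat}, Proposition~\ref{rep} and Lemma~\ref{n} are used only as black boxes.
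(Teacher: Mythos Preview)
Your plan has a genuine gap at the heart of the argument, namely the claim that \pa\ can win $\bold G^{n+3}(\At\A)$ on the uncountable split while \pe\ wins on the countable split $\At\B$. The game $\bold G^{n+3}$ is played purely on the atom structure, with networks on at most $n+3$ nodes labelled by atoms; in your splitting construction the copies of the distinguished atom are pairwise interchangeable by automorphisms, so any finite configuration of atoms that \pa\ can produce in $\At\A$ already occurs in $\At\B$, and any response available to \pe\ in $\At\B$ is available (with more room to spare) in $\At\A$. Put differently, the back-and-forth system you build to prove $\A\equiv_{\infty,\omega}\B$ transfers winning strategies in $\bold G^{n+3}$ from one atom structure to the other. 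Since $\B\in\Nr_n\CA_\omega\subseteq\bold S_c\Nr_n\CA_{n+3}$, Lemma~\ref{n} gives \pe\ a \ws\ in $\bold G^{n+3}(\At\B)$, and hence in $\bold G^{n+3}(\At\A)$ as well; so the contrapositive of Lemma~\ref{n} cannot be invoked for $\A$. The ``shrinking index'' computation you borrow from the $\RA$ argument after Theorem~\ref{raa} is not a game strategy at all: it is an algebraic calculation inside a putative $\CA_5$-dilation that takes \emph{uncountable} infima $\prod_{0<k<\kappa}(\cdots)$ and uses a pigeonhole on an uncountable preimage --- moves that have no counterpart in a finite-node atomic network game. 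That computation shows $\A\notin\Ra\CA_5$, not $\A\notin\bold S_c\Ra\CA_5$, and it does not convert into a \ws\ for \pa\ in $\bold G^{n+3}$.

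This is exactly why the paper does \emph{not} attempt a single splitting construction. Its proof is in two disjoint pieces. First it works with the rainbow algebra $\C_{\Z,\N}$, where \pa's \ws\ in $\bold G^{n+3}$ comes from forcing a genuinely decreasing sequence in $\N$ (a structural feature of the colours, not a cardinality), and produces a countable $\B\equiv\C_{\Z,\N}$ via an ultrapower followed by an elementary-chain argument; Theorem~\ref{gripneat} then places $\Cm\At\B$ in $\Nr_n\CA_\omega$, so $\B\in\bold S_d\Nr_n\CA_\omega\cap\CRCA_n$ --- but only $\bold S_d$, not $\Nr_n\CA_\omega$ itself. This rules out elementary $\bold K$ between $\bold S_d\Nr_n\CA_\omega\cap\CRCA_n$ and $\bold S_c\Nr_n\CA_{n+3}$. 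Second, to remove the $\bold S_d$, the paper invokes a \emph{separate} cardinality-based splitting construction (in the style of \cite[Theorem~5.1.4]{Sayedneat}) yielding $\A'\in\Nr_n\CA_\omega\cap\CRCA_n$ and $\B'\equiv_{\infty,\omega}\A'$ with $\B'\notin\bold S_d\Nr_n\CA_{n+1}$; combining the two steps gives the theorem. Your proposal collapses these into one, and the collapse fails precisely at the game step.
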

\begin{proof} The proof is divided into two parts:
 
{\bf 1. Any class between $\bold S_d\Nr_n\CA_{\omega}\cap \CRCA_n$ and $\bold S_c\Nr_n\CA_{n+3}$ is not elementary:}
We use the construction in \cite[Theorem 5.12]{mlq}. 
The algebra $\C_{\Z, \N}(\in \RCA_n$) based on $\Z$ (greens) and $\N$ (reds) denotes the rainbow-like algebra used in {\it op.cit}. 
It was shown in \cite{mlq} that \pe\ has a \ws\ in $G_k(\At\C_{\Z, \N})$ for all $k\in \omega$. With some more effort 
it can be shown that \pe\ has a \ws\ in $\bold H_k(\At\C_{\Z, \N} )$ for all $k\in \omega$
dealing with the new amalgamation moves without hyperedges. 
It remains therefore to describe \pe's strategy in dealing with labelling hyperedges in $\lambda$--neat hypernetworks, where $\lambda$ is a constant label kept on short hyperedges.
In a play, \pe\ is required to play $\lambda$--neat hypernetworks, so she has no choice about the
the short edges, these are labelled by $\lambda$. In response to a cylindrifier move by \pa\
extending the current hypernetwork providing a new node $k$,
and a previously played coloured hypernetwork $M$
all long hyperedges not incident with $k$ necessarily keep the hyperlabel they had in $M$.
All long hyperedges incident with $k$ in $M$
are given unique hyperlabels not occurring as the hyperlabel of any other hyperedge in $M$.
In response to an amalgamation move, which involves two hypernetworks required to be amalgamated, say $(M,N)$
all long hyperedges whose range is contained in $\nodes(M)$
have hyperlabel determined by $M$, and those whose range is contained in $\nodes(N)$ have hyperlabels determined
by $N$. If $\bar{x}$ is a long hyperedge of \pe\ s response $L$ where
$\rng(\bar{x})\nsubseteq \nodes(M)$, $\nodes(N)$ then $\bar{x}$
is given
a new hyperlabel, not used in 
any previously played hypernetwork and not used within $L$ as the label of any hyperedge other than $\bar{x}$.
This completes her strategy for labelling hyperedges.
{\bf We first show that  \pe\ has a \ws\ in $G_k(\At\C_{\Z, \N})$ where $0<k<\omega$ is the number of rounds:}
Let $0<k<\omega$. We proceed inductively. Let $M_0, M_1,\ldots, M_r$, $r<k$ be the coloured graphs at the start of a play of $G_k$ just before round $r+1$.
Assume inductively, that \pe\ computes a partial function $\rho_s:\Z\to \N$, for $s\leq r:$
Let $0<k<\omega$. We proceed inductively. Let $M_0, M_1,\ldots, M_r$, $r<k$ be the coloured graphs at the start of a play of $G_k$ just before round $r+1$.
Assume inductively, that \pe\ computes a partial function $\rho_s:\Z\to \N$, for $s\leq r:$
\begin{enumroman}
\item $\rho_0\subseteq \ldots \rho_t\subseteq\ldots\subseteq\ldots  \rho_s$ is (strict) order preserving; if $i<j\in \dom\rho_s$ then $\rho_s(i)-\rho_s(j)\geq  3^{k-r}$, where $k-r$
is the number of rounds remaining in the game,
and 
$$\dom(\rho_s)=\{i\in \Z: \exists t\leq s, \text { $M_t$ contains an $i$--cone as a subgraph}\},$$

\item for $u,v,x_0\in \nodes(M_s)$, if $M_s(u,v)=\r_{\mu,k}$, $\mu, k\in \N$, $M_s(x_0,u)=\g_0^i$, $M_s(x_0,v)=\g_0^j$,
where $i,j\in \Z$ are tints of two cones, with base $F$ such that $x_0$ is the first element in $F$ under the induced linear order,
then $\rho_s(i)=\mu$ and $\rho_s(j)=k$.
\end{enumroman} 
For the base of the induction \pe\ takes $M_0=\rho_0=\emptyset.$ 
Assume that $M_r$, $r<k$  ($k$ the number of rounds) is the current coloured graph and that \pe\ has constructed $\rho_r:\Z\to \N$ to be a finite order preserving partial map
such conditions (i) and (ii) hold. We show that (i) and (ii) can be maintained in a 
further round.
We check the most difficult case. Assume that $\beta\in \nodes(M_r)$, $\delta\notin \nodes(M_r)$ is chosen by \pa\ in his cylindrifier move,
such that $\beta$ and $\delta$ are apprexes of two cones having
same base and green tints $p\neq  q\in \Z$. 
Now \pe\ adds $q$ to $\dom(\rho_r)$ forming $\rho_{r+1}$ by defining the value $\rho_{r+1}(p)\in \N$ 
in such a way to preserve the (natural) order on $\dom(\rho_r)\cup \{q\}$, that is maintaining property (i).
Inductively, $\rho_r$ is order preserving and `widely spaced' meaning that the gap between its elements is
at least $3^{k-r}$, so this can be maintained in a further round.
Now \pe\  has to define a (complete) coloured graph 
$M_{r+1}$ such that $\nodes(M_{r+1})=\nodes(M_r)\cup \{\delta\}.$ 
In particular, she has to find a suitable 
red label for the edge $(\beta, \delta).$
Having $\rho_{r+1}$ at hand she proceeds as follows. Now that $p, q\in \dom(\rho_{r+1})$, 
she lets $\mu=\rho_{r+1}(p)$, $b=\rho_{r+1}(q)$. The red label she chooses for the edge $(\beta, \delta)$ is: (*)\ \  $M_{r+1}(\beta, \delta)=\r_{\mu,b}$.
This way she maintains property (ii) for $\rho_{r+1}.$  Next we show that this is a \ws\ for \pe. 

We check consistency of newly created triangles proving that $M_{r+1}$ is a coloured graph completing the induction. 
Since $\rho_{r+1}$ is chosen to preserve order, no new forbidden triple (involving two greens and one red) will be created.
Now we check red triangles only of the form $(\beta, y, \delta)$ in $M_{r+1}$ $(y\in \nodes(M_r)$). 
We can assume that  $y$ is the apex of a cone with base $F$ in $M_r$ and green tint $t$, say,
and that $\beta$ is the appex of the $p$--cone having the same base. 
Then inductively by condition (ii), taking $x_0$ to be the first element of $F$, and taking  
the nodes $\beta, y$, and the tints $p, t$, for $u, v, i, j$,  respectively, we have by observing that 
$\beta, y\in \nodes(M_r)$, $\beta, y\in \dom(\rho_r)$ and $\rho_r\subseteq \rho_{r+1}$, 
the following:  
$M_{r+1}(\beta,y)=M_{r}(\beta, y)=\r_{\rho_{r}(p), \rho_{r}(t)}=r_{\rho_{r+1}(p), \rho_{r+1}(t)}.$
By  her strategy, we have  $M_{r+1}(y,\delta)=\r_{\rho_{r+1}(t), \rho_{r+1}(q)}$ 
and we know by (*) that $M_{r+1}(\beta, \delta)=\r_{\rho_{r+1}(p), \rho_{r+1}(q)}$. 
The triple $(\r_{\rho_{r+1}(p), \rho_{r+1}(t)}, \r_{\rho_{r+1}(t), \rho_{r+1}(q)}, \r_{\rho_{r+1}(p), \rho_{r+1}(q)})$
of reds is consistent and we are done with this case. 
All other edge labelling and colouring $n-1$ tuples in $M_{r+1}$ 
by yellow shades are  exactly like in \cite{HH}.  
But we can go further.\\ 
{\bf We show that \pe\ has  a \ws\ in the stronger game $H_k(\At\C)$ for all $k\in \omega$:}.
\pe's strategy dealing with $\lambda$--neat hypernetworks, where $\lambda$ is a constant label kept on short hyperedges is already dealt with.
Now we change the board of play but only formally. We play on {\it $\lambda$--neat hypergraphs}.  
Given a rainbow algebra $\A$, there is a one to one correspondence between coloured graphs on $\At\A$ and networks on $\At\A$ \cite[Half of p. 76]{HHbook2}
denote this correspondence, expressed by a bijection from coloured graphs to networks by  (*): 
$$\Gamma\mapsto N_{\Gamma}, \ \ \nodes(\Gamma)=\nodes(N_{\Gamma}).$$
Now the game $H$ can be re-formulated to be played on {\it $\lambda$--neat hypergraphs} on a rainbow algebra $\A$; these are of the  form
$(\Delta, N^h)$, where $\Delta$ is a coloured graph on $\At\A$, $\lambda$ is a hyperlabel, and 
$N^h$ is as before, $N^h: ^{<\omega}\nodes(\Delta)\to \Lambda$,
such that for $\bar x, \bar y\in\; ^{< \omega}\!\nodes(\Delta)$,
if $\bar x\sim\bar y \Rightarrow N^h(\bar x)=N^h(\bar y).$ Here $\bar{x}\sim \bar{y}$, making the obvious translation,
 is the equivalence relation defined by:  $x\sim y\iff$ $|x|=|y|$ and $N_{\Delta}(x_i, y_i, \bar{z})\leq {\sf d}_{01}$ for all $i<|x|$ and some 
$\bar{z}\in {}^{n-2}\nodes(\Delta)$.
All notions earlier defined for hypernetworks, in particular, $\lambda$--neat ones,  translate to 
$\lambda$--neat hypergraphs, using (*),
like short hyperdges, long hypedges, $\lambda$--neat hypergraphs, etc.
The game is played now on $\lambda$--neat hypergraphs on which the constant label
$\lambda$ is kept on the short hyperedges in $^{<\omega}\nodes(\Delta)$.
We have already dealt with the `graph part' of the game.
We turn to the remaining amalgamation moves. We need some notation and terminology.
Every edge of any hypergraph (edge of its graph part) has an {\it owner \pa\ or \pe}, namely, the one who coloured this edge.
We call such edges \pa\ edges or \pe\ edges. Each long hyperedge $\bar{x}$ in $N^h$ of a hypergraph $N$
occurring in the play has {\it an envelope} $v_N(\bar{x})$ to be defined shortly.\\
In the initial round,  \pa\ plays $a\in \alpha$ and \pe\ plays $N_0$
then all edges of $N_0$ belongs to \pa.
There are no long hyperedges in $N_0$.
If \pa\ plays a cylindrifier move requiring a new node $k$ and \pe\ responds with $M$ then the owner
in $M$ of an edge not incident with $k$ is the same as it was in $N$
and the envelope in $M$ of a long hyperedge not incident with $k$ is the same as that it was in $N$.
All  edges $(l,k)$
for $l\in \nodes(N)\sim \{k\}$ belong to \pe\ in $M$.
if $\bar{x}$ is any long hyperedge of $M$ with $k\in \rng(\bar{x})$, then $v_M(\bar{x})=\nodes(M)$.\\
If \pa\ plays the amalgamation move $(M,N)$ (of two $\lambda$--neat hypergraphs) and \pe\ responds with $L$
then for $m\neq n\in \nodes(L)$ the owner in $L$ of a edge $(m,n)$ is \pa\ if it belongs to
\pa\ in either $M$ or $N$, in all other cases it belongs to \pe\ in $L$.
If $\bar{x}$ is a long hyperedge of $L$
then $v_L(\bar{x})=v_M(\bar{x})$ if $\rng(\bar{x})\subseteq \nodes(M)$, $v_L(\bar{x})=v_N(\bar{x})$ and  $v_L(\bar{x})=\nodes(M)$ otherwise. If in a later move,
\pa\ plays the transformation move $(N,\theta)$
and \pe\ responds with $N\theta$, then owners and envelopes are inherited in the obvious way.
This completes the definition of owners and envelopes.
The next claim, basically, reduces amalgamation moves to cylindrifier moves.
By induction on the number of rounds one can show:

{\bf Claim}:\label{r}  
Let $M, N$ occur in a play of $H_m$, $0<m\in \omega.$ in which \pe\ uses the above labelling
for hyperedges. Let $\bar{x}$ be a long hyperedge of $M$ and let $\bar{y}$ be a long hyperedge of $N$.
Then for any hyperedge $\bar{x}'$ with $\rng(\bar{x}')\subseteq v_M(\bar{x})$, if $M(\bar{x}')=M(\bar{x})$
then $\bar{x}'=\bar{x}$. 
If $\bar{x}$ is a long hyperedge of $M$ and $\bar{y}$ is a long hyperedge of $N$, and $M(\bar{x})=N(\bar{y}),$
then there is a local isomorphism $\theta: v_M(\bar{x})\to v_N(\bar{y})$ such that
$\theta(x_i)=y_i$ for all $i<|x|$. For any $x\in \nodes(M)\sim v_M(\bar{x})$ and $S\subseteq v_M(\bar{x})$, if $(x,s)$ belong to \pa\ in $M$
for all $s\in S$, then $|S|\leq 2$.

Next,  we proceed inductively with the inductive hypothesis exactly as before, except that now each $N_r$ is a
$\lambda$--neat hypergraph.
All what remains is the amalgamation move. With the above claim at hand,
this turns out an easy task to implement guided by \pe\ s
\ws\ in the graph part.\\ 
We consider an amalgamation move at round $0<r$, $(N_s,N_t)$ chosen by 
\pa\ in round $r+1$, \pe\ has to deliver an amalgam $N_{r+1}$.
 \pe\ lets $\nodes(N_{r+1})=\nodes(N_s)\cup \nodes (N_t)$, then she, for a start, 
has to choose a colour for each edge $(i,j)$ where $i\in \nodes(N_s)\sim \nodes(N_t)$ and $j\in \nodes(N_t)\sim \nodes(N_s)$.
Let $\bar{x}$ enumerate $\nodes(N_s)\cap \nodes(N_t).$
If $\bar{x}$ is short, then there are at most two nodes in the intersection
and this case is identical to the cylindrifier move.
If not, that is if $\bar{x}$ is long in $N_s$, then by the claim
there is a partial isomorphism $\theta: v_{N_s}(\bar{x})\to v_{N_t}(\bar{x})$ fixing
$\bar{x}$. We can assume that
$v_{N_s}(\bar{x})=\nodes(N_s)\cap \nodes (N_t)=\rng(\bar{x})=v_{N_t}(\bar{x}).$
It remains to label the edges $(i,j)\in N_{r+1}$ where $i\in \nodes(N_s)\sim \nodes (N_t)$ and $j\in \nodes(N_t)\sim \nodes(N_s)$.
Her strategy is now again similar to the cylindrifier move. If $i$ and $j$ are 
tints of the same cone she chooses a red using $\rho_{r+1}$ (constructed inductively like in the above proof),
if not she  chooses  a white. She never chooses a green.
Concerning $n-1$ tuples 
she  needs to label $n-1$ hyperedges by shades of yellow.
For each tuple $\bar{a}=a_0,\ldots a_{n-2}\in N_{r+1}$,   with no edge
$(a_i, a_j)$ coloured green (we have already labelled edges), then  \pe\ colours $\bar{a}$ by $\y_S$, where
$$S=\{i\in \Z: \text { there is an $i$ cone in $N_{r+1}$ with base $\bar{a}$}\}.$$
We have shown  
that  \pe\ has a \ws\ in $\bold H_{k}(\At\C)$ for each finite $k$.

Using ultrapowers and an elementary chain argument as in \cite[Theorem 3.3.5]{HHbook2}, 
one gets a countable and atomic $\B\in \CA_n$ such \pe\ has a \ws\ in $\bold H_{\omega}(\At(\B)$),
$\B\equiv \C_{\Z, \N} $ and by Lemma \ref{gripneat} $\Cm\At\B\in \Nr_n\CA_{\omega}$ and $\At\B\in \At\Nr_n\CA_{\omega}$ 
(however, we do not guarantee that $\B$ itself is in $\Nr_n\CA_{\omega}$) .  
Since $\B\subseteq_d \Cm\At\B$,
$\B\in \bold S_d\Nr_n\CA_{\omega}$, so $\B\in \bold S_c\Nr_n\CA_{\omega}$.  Being countable, it follows by \cite[Theorem 5.3.6]{Sayedneat} that $\B\in \CRCA_n$. 
We show that 
 \pa\ has a \ws\ in $\bold G^{n+3}(\At\C_{\Z, \N} )$. For the reader's conveniance we include this short part of the proof. 
In the initial round \pa\ plays a graph $M$ with nodes $0,1,\ldots, n-1$ such that $M(i,j)=\w_0$
for $i<j<n-1$
and $M(i, n-1)=\g_i$
$(i=1, \ldots, n-2)$, $M(0, n-1)=\g_0^0$ and $M(0,1,\ldots, n-2)=\y_{\Z}$. This is a $0$ cone.
In the following move \pa\ chooses the base  of the cone $(0,\ldots, n-2)$ and demands a node $n$
with $M_2(i,n)=\g_i$ $(i=1,\ldots, n-2)$, and $M_2(0,n)=\g_0^{-1}.$
\pe\ must choose a label for the edge $(n+1,n)$ of $M_2$. It must be a red atom $r_{mk}$, $m, k\in \N$. Since $-1<0$, then by the `order preserving' condition
we have $m<k$.
In the next move \pa\ plays the face $(0, \ldots, n-2)$ and demands a node $n+1$, with $M_3(i,n)=\g_i$ $(i=1,\ldots, n-2)$,
such that  $M_3(0, n+2)=\g_0^{-2}$.
Then $M_3(n+1,n)$ and $M_3(n+1, n-1)$ both being red, the indices must match.
$M_3(n+1,n)=r_{lk}$ and $M_3(n+1, r-1)=r_{km}$ with $l<m\in \N$.
In the next round \pa\ plays $(0,1,\ldots n-2)$ and re-uses the node $2$ such that $M_4(0,2)=\g_0^{-3}$.
This time we have $M_4(n,n-1)=\r_{jl}$ for some $j<l<m\in \N$.
Continuing in this manner leads to a decreasing
sequence in $\N$. We have proved the required.
 By 
Lemma \ref{n}, $\C_{\Z, \N} \notin \bold S_c\Nr_n\CA_{n+3}$.  
Let $\bold K$ be a class between $\bold S_d\Nr_n\CA_{\omega}\cap {\sf CRCA}_n$ 
and $\bold S_c\Nr_n\CA_{n+3}$. 
Then  $\bold K$ is not elementary, because $\C_{\Z, \N} \notin \bold S_c\Nr_n\CA_{n+3}(\supseteq \bold K)$, 
$\B\in \bold S_d\Nr_n\CA_{\omega}\cap {\sf CRCA}_n(\subseteq \bold K)$,
and $\C_{\Z, \N}\equiv \B$. We are not there yet, for $\B$ might still be outside $\Nr_n\CA_{\omega}$ (like the $\E$ used in item (1) of Theoremref{square}). 


{\bf 2. Proving the required; removing the $\bold S_d$:} To get the required we resort to an auxiliary construction. 
We slighty modify the construction in \cite[Lemma 5.1.3, Theorem 5.1.4]{Sayedneat}. Using the same notation, the algebras $\A$ and $\B$ constructed in {\it op.cit} satisfy 
$\A\in {\sf Nr}_n\CA_{\omega}$, $\B\notin {\sf Nr}_n\CA_{n+1}$ and $\A\equiv \B$.
As they stand, $\A$ and $\B$ are not atomic, but it 
can be  fixed that they are atomic, giving the same result with the rest of the proof unaltered. This is done by interpreting the uncountably many tenary relations in the signature of 
$\Mo$ defined in \cite[Lemma 5.1.3]{Sayedneat}, which is the base of $\A$ and $\B$ 
to be {\it disjoint} in $\Mo$, not just distinct.  The construction is presented this way in \cite{IGPL}, where (the equivalent of) 
$\Mo$ is built in a 
more basic step-by-step fashon.
We work with $2<n<\omega$ instead of only $n=3$. The proof presented in {\it op.cit} lift verbatim to any such $n$.
Let $u\in {}^nn$. Write $\bold 1_u$ for $\chi_u^{\Mo}$ (denoted by $1_u$ (for $n=3$) in \cite[Theorem 5.1.4]{Sayedneat}.) 
We denote by $\A_u$ the Boolean algebra $\Rl_{\bold 1_u}\A=\{x\in \A: x\leq \bold 1_u\}$ 
and similarly  for $\B$, writing $\B_u$ short hand  for the Boolean algebra $\Rl_{\bold 1_u}\B=\{x\in \B: x\leq \bold 1_u\}.$
Then exactly like in \cite{Sayedneat}, it can be proved that $\A\equiv \B$.
Using that $\Mo$ has quantifier elimination we get, using the same argument in {\it op.cit} 
that $\A\in \Nr_n\CA_{\omega}$.  The property that $\B\notin \Nr_n\CA_{n+1}$ is also still maintained.
To see why, consider the substitution operator $_{n}{\sf s}(0, 1)$ (using one spare dimension) as defined in the proof of \cite[Theorem 5.1.4]{Sayedneat}.
Assume for contradiction that 
$\B=\Nr_{n}\C$, with $\C\in \CA_{n+1}.$ Let $u=(1, 0, 2,\ldots n-1)$. Then $\A_u=\B_u$
and so $|\B_u|>\omega$. The term  $_{n}{\sf s}(0, 1)$ acts like a substitution operator corresponding
to the transposition $[0, 1]$; it `swaps' the first two co--ordinates.
Now one can show that $_{n}{\sf s(0,1)}^{\C}\B_u\subseteq \B_{[0,1]\circ u}=\B_{Id},$ 
so $|_{n}{\sf s}(0,1)^{\C}\B_u|$ is countable because $\B_{Id}$ was forced by construction to be 
countable. But $_{n}{\sf s}(0,1)$ is a Boolean automorpism with inverse
$_{n}{\sf s}(1,0)$, 
so that $|\B_u|=|_{n}{\sf s(0,1)}^{\C}\B_u|>\omega$, contradiction.

It can be proved exactly like in \cite{Sayedneat} that the property $\A\equiv \B$ is also still maintained after making the atoms disjoint. In fact, this change offers more for it can be proved 
that $\A\equiv_{\infty,\omega}\B.$
We show that \pe\ has a \ws\ in an \ef-game over $(\A, \B)$ concluding that $\A\equiv_{\infty}\B$.
At any stage of the game,
if \pa\ places a pebble on one of
$\A$ or $\B$, \pe\ must place a matching pebble,  on the other
algebra.  Let $\b a = \la{a_0, a_1, \ldots, a_{n-1}}$ be the position
of the pebbles played so far (by either player) on $\A$ and let $\b
b = \la{b_0, \ldots, b_{n-1}}$ be the the position of the pebbles played
on $\B$.  \pe\ maintains the following properties throughout the game:
For any atom $x$ (of either algebra) with
$x\cdot \bold 1_{Id}=0$ then $x \in a_i\iff x\in b_i$ and $\b a$ induces a finite partion of $\bold 1_{Id}$ in $\A$ of $2^n$
(possibly empty) parts $p_i:i<2^n$ and $\b b$ induces a partion of
$\bold 1_{Id}$ in $\B$ of parts $q_i:i<2^n$.  
Furthermore, $p_i$ is finite $\iff$ $q_i$ is
finite and, in this case, $|p_i|=|q_i|$.
That such properties can be maintained is fairly easy to show.
Now because $\A\in \Nr_n\CA_{\omega}\cap {\sf CRCA}_n$, it suffices to show  
(since $\B$ is atomic) that 
$\B$  is in fact outside $\bold S_d\Nr_n\CA_{n+1}\cap \bf At$.  
Take $\kappa$ the signature of $\Mo$; more specifically,  the number of $n$-ary relation symbols to be $2^{2^{\omega}}$, and assume for contradiction that  
$\B\in \bold S_d\Nr_n\CA_{n+1}\cap \bf At$. 
Then $\B\subseteq_d \mathfrak{Nr}_n\D$, for some $\D\in \CA_{n+1}$ and $\mathfrak{Nr}_n\D$ is atomic. For brevity, 
let $\C=\mathfrak{Nr}_n\D$. Then by item (1) of Lemma \ref{join} $\Rl_{Id}\B\subseteq_d \Rl_{Id}\C$.
Since $\C$ is atomic,  then by item (1) of the same Lemma $\Rl_{Id}\C$ is also atomic.  Using the same reasoning as above, we get that $|\Rl_{Id}\C|>2^{\omega}$ (since $\C\in \Nr_n\CA_{n+1}$.) 
By the choice of $\kappa$, we get that $|\At\Rl_{Id}\C|>\omega$. 
By density, we get from item (2) of Lemma \ref{join}, that $\At\Rl_{Id}\C\subseteq \At\Rl_{Id}\B$. 
Hence $|\At\Rl_{Id}\B|\geq |\At\Rl_{Id}\C|>\omega$.   
But by the construction of $\B$, $|\Rl_{Id}\B|=|\At\Rl_{Id}\B|=\omega$,   which is a  contradiction and we are done.
Since ${\bf El}(\Nr_n\CA_{\omega}\cap {{\sf CRCA}_n})\nsubseteq \bold S_d\Nr_n\CA_{\omega}\cap \CRCA_n$,
there can be no elementary class  
between $\Nr_n\CA_{\omega}\cap {{\sf CRCA}_n}$ and $\bold S_d\Nr_n\CA_{\omega}\cap \CRCA_n$. Having already 
eliminated elementary classes between $\bold S_d\Nr_n\CA_{\omega}\cap {\sf CRCA}_n$ and $\bold S_c\Nr_n\CA_{n+3}$, we are done.

\end{proof}

Fix finite $k>2$.  Then $\V_k={\sf Str}(\bold S\Nr_n\CA_{n+k})$ is not elementary 
$\implies   \V_k$ is not-atom canonical.  But the converse implication {\it does not hold} because (argueing contrapositively) in the case of atom--canonicity, 
we get that ${\sf Str}(\bold S\Nr_n\CA_{n+k})={\sf At}(\bold S\Nr_n\CA_{n+k})$, and the last class is elementary \cite[Theorem 2.84]{HHbook}.  
In particular, we do not know whether ${\sf Str}(\bold S\Nr_n\CA_{n+k})$, for a particular finite $k\geq 3$, is elementary or not.
Nevertheless, it is easy to show that {\it there has to be a finite $k<\omega$ such that $\V_j$ is not elementary for all $j\geq k$}:
\begin{theorem}\label{truncate} \begin{enumerate} 
\item There is a finite $k\geq 2$, such that for all $m\geq n+k$ the class of frames ${\sf Str}(\bold S\Nr_n\CA_{m})=\{\F: \Cm\F\in \bold S\Nr_n\CA_{m}\}$ is not elementary. 
An entirely analogous result holds for $\RA$s,
\item  Let $\bold O\in \{\bold S_c, \bold S_d, \bold I\}$ and $k\geq 3$. 
Then the class of frames ${\sf K}_k=\{\F: \Cm\F\in \bold O\Nr_n\CA_{n+k}\}$ 
is not elementary. 

\end{enumerate}
\end{theorem}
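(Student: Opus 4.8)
The plan is to derive both parts from material already in place: the chain equalities $\RCA_n=\bigcap_{k<\omega}\bold S\Nr_n\CA_{n+k}$ and $\bold S\Ra\CA_{\omega}=\bigcap_{k<\omega}\bold S\Ra\CA_{3+k}$, the rainbow construction behind Theorem \ref{iii} together with Lemma \ref{n} and Theorem \ref{gripneat}, and the Hirsch--Hodkinson theorem (\cite{HHbook, HHbook2}) that the class of strongly representable atom structures --- $\Str(\RCA_n)$, and its relation algebra analogue $\Str(\bold S\Ra\CA_{\omega})$ --- is not elementary.

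For part (1) the argument is soft. A class closed under isomorphism is elementary precisely when it is closed both under ultraproducts and under elementary equivalence, so the non-elementarity of $\Str(\RCA_n)$ produces either atom structures $\F\equiv\F'$ with $\F\in\Str(\RCA_n)$ and $\F'\notin\Str(\RCA_n)$, or a family $(\F_r)_{r<\omega}$ inside $\Str(\RCA_n)$ whose ultraproduct $\F'=\prod_{r/U}\F_r$ is not in $\Str(\RCA_n)$. In either case $\Cm\F'\notin\RCA_n=\bigcap_k\bold S\Nr_n\CA_{n+k}$, so there is a least finite $k_0$ with $\Cm\F'\notin\bold S\Nr_n\CA_{n+k_0}$; since the chain $(\bold S\Nr_n\CA_{n+k})_k$ is decreasing, $\Cm\F'\notin\bold S\Nr_n\CA_m$ for every $m\geq n+k_0$, whereas $\Cm\F$ (respectively each $\Cm\F_r$) lies in $\RCA_n\subseteq\bold S\Nr_n\CA_m$. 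Hence for all $m\geq n+\max(k_0,2)$ the class $\Str(\bold S\Nr_n\CA_m)$ contains $\F$ (respectively every $\F_r$) but not $\F'$, so it fails to be closed under elementary equivalence (respectively under ultraproducts) and is therefore not elementary; take $k=\max(k_0,2)$. The relation algebra statement is proved the same way, with $\bold S\Ra\CA_{\omega}=\bigcap_k\bold S\Ra\CA_{3+k}$ in place of $\RCA_n$.

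For part (2) I would reuse the rainbow atom structure $\alpha=\At\C_{\Z,\N}$ and the two facts established inside the proof of Theorem \ref{iii}: \pa\ has a \ws\ in $\bold G^{n+3}(\alpha)$, and there is a countable atomic algebra $\B\equiv\C_{\Z,\N}$ with $\Cm\At\B\in\Nr_n\CA_\omega$ --- the latter obtained from \pe's \ws's in the finite games $\bold H_\ell(\alpha)$ by an ultrapower and elementary-chain argument followed by Theorem \ref{gripneat}. Fix $k\geq 3$ and $\bold O\in\{\bold S_c,\bold S_d,\bold I\}$. Applying Lemma \ref{n} to the atomic algebra $\Cm\alpha$ (whose atom structure is $\alpha$), the first fact gives $\Cm\alpha\notin\bold S_c\Nr_n\CA_{n+3}$; since $\bold S_c\Nr_n\CA_{n+k}\subseteq\bold S_c\Nr_n\CA_{n+3}$ for $k\geq 3$, and $\bold O\Nr_n\CA_{n+k}\subseteq\bold S_c\Nr_n\CA_{n+k}$ because $\bold I\subseteq\bold S_d\subseteq\bold S_c$, it follows that $\Cm\alpha\notin\bold O\Nr_n\CA_{n+k}$, i.e.\ $\alpha\notin{\sf K}_k$. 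On the other hand, `being an atom' and the relations of the atom structure are first-order expressible inside the algebra, so $\B\equiv\C_{\Z,\N}$ yields $\At\B\equiv\At\C_{\Z,\N}=\alpha$; and $\Cm\At\B\in\Nr_n\CA_\omega\subseteq\Nr_n\CA_{n+k}\subseteq\bold O\Nr_n\CA_{n+k}$, so $\At\B\in{\sf K}_k$. Thus ${\sf K}_k$ contains $\At\B$ but not the elementarily equivalent $\alpha$, so it is not elementary.

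The real work sits in the input imported from the proof of Theorem \ref{iii}: fusing \pe's winning strategies in all the finite games $\bold H_\ell(\alpha)$ into a winning strategy in $\bold H_\omega$ over a \emph{countable} structure --- the elementary-chain argument of \cite[Theorem 3.3.5]{HHbook2} --- and then invoking Theorem \ref{gripneat} to conclude $\Cm\At\B\in\Nr_n\CA_\omega$. Granted that, part (2) needs only the routine observation that elementary equivalence passes between an atomic algebra and its atom structure, and part (1) is entirely formal once the Hirsch--Hodkinson non-elementarity of strongly representable atom structures is in hand. The only point requiring care is that the operator $\Cm(\cdot)$ occurring in the definitions of $\Str(\bold S\Nr_n\CA_m)$ and of ${\sf K}_k$ must be applied to the atom structure of the relevant rainbow algebra rather than to the rainbow algebra itself; this is harmless, since $\Cm\alpha$ and $\C_{\Z,\N}$ share the atom structure $\alpha$ and Lemma \ref{n} applies to the atomic algebra $\Cm\alpha$.
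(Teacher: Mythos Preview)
Your proof is correct and follows the same route as the paper: for part (1) both arguments pick a witness to the non-elementarity of $\Str(\RCA_n)$ and locate the first level $n+k_0$ of the neat-embedding chain at which the complex algebra of the bad frame fails, and for part (2) both reuse the rainbow pair $\C_{\Z,\N}$ and its countable elementary companion produced in the proof of Theorem \ref{iii} together with Lemma \ref{n} and Theorem \ref{gripneat}. The only cosmetic difference is that in part (2) the paper first passes to the algebraic class $\bold K_k=\{\A\in\CA_n\cap{\bf At}:\Cm\At\A\in\bold O\Nr_n\CA_{n+k}\}$ and exhibits $\Tm\alpha\equiv\C_{\Z,\N}$ (with $\alpha$ the countable atom structure from the elementary chain) as the separating pair of algebras, whereas you stay at the level of frames and transfer $\B\equiv\C_{\Z,\N}$ to $\At\B\equiv\At\C_{\Z,\N}$ via first-order interpretability of the atom structure.
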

\begin{proof}
(1): 
We show that ${\sf Str}(\bold S\Nr_n\CA_{m})$ is not elementary for some finite $m\geq n+2$. By \cite{a} $m$ cannot be equal to $n+1$. 
Let $(\A_i: i\in \omega)$ be a sequence of (strongly) representable $\CA_n$s with $\Cm\At\A_i=\A_i$
and $\A=\Pi_{i/U}\A_i$ is not strongly representable with respect to any non-principal ultrafilter $U$ on $\omega$.
Such algebras exist \cite{HHbook2}. Hence $\Cm\At\A\notin \bold S\Nr_n\CA_{\omega}=\bigcap_{i\in \omega}\bold S\Nr_n\CA_{n+i}$, 
so $\Cm\At\A\notin \bold S\Nr_n\CA_{l}$ for all $l>m$, for some $m\in \omega$, $m\geq n+2$. 
But for each such $l$, $\A_i\in \bold S\Nr_n\CA_l(\subseteq {\sf RCA}_n)$, 
so $(\A_i:i\in \omega)$ is a sequence of algebras such that $\Cm\At(\A_i)\in \bold S\Nr_n\CA_{l}$ $(i\in I)$, but 
$\Cm(\At(\Pi_{i/U}\A_i))=\Cm\At(\A)\notin \bold S\Nr_n\CA_l$, for all $l\geq m$.\\
(2): We use the same construction (and notation) in the last item of Theorem \ref{iii}. 
It suffices to show that the class of algebras $\bold K_k=\{\A\in \CA_n\cap {\bf At}: \Cm\At\A\in \bold O\Nr_n\CA_k\}$ is not elementary. 
\pe\ has a \ws\ in $\bold H_{\omega}(\alpha)$ for some countable atom structure $\alpha$, 
$\Tm\alpha\subseteq_d \Cm\alpha\in \bold {\sf Nr}_n\CA_{\omega}$ and $\Tm\alpha\in {\sf CRCA}_n$.
Since $\C_{\Z, \N}\notin \bold S_c{\sf Nr}_n\CA_{n+3}$,  then 
$\C_{\Z, \N}=\Cm\At\C_{\Z, \N}\notin {\bold K}_k$, $\C_{\Z, \N}\equiv \Tm\alpha$ 
and $\Tm\alpha\in {\bold K}_k$ 
because $\Cm\alpha\in \Nr_n\CA_{\omega}\subseteq \bold S_d\Nr_n\CA_{\omega}\subseteq \bold S_c\Nr_n\CA_{\omega}$.
We have shown that $\C_{\Z, \N}\in {\bf El}{\bold K}_k\sim {\bold K}_k$, proving the required. 


\end{proof}

To obtain the $\sf RA$ analogue of item (2) of  Theorem \ref{truncate}, we need to strengthen \cite[Theorem 39]{r}. 
We prove more by allowing infinite conjunctions in constructing a certain 
model (denoted by ${\cal M}_a$) as clarified below. 
The $k$ rounded game $H_k$ $(k\leq \omega)$ is defined for relation algebras in \cite[Definition 28]{r}. 
$\sf Gws_{\beta}$ denotes the class of {\it generalized weak set algebras of dimension $\beta$} in the sense of \cite[Definition 3.1.2]{HMT2}.
\begin{theorem}\label{gripneat} Let $\alpha$ be a countable atom structure. If \pe\ has a \ws\ in $H_{\omega}(\alpha)$, then
there exists a complete $\D\in \RCA_{\omega}$ such that 
$\Cm\alpha\cong \mathfrak{Ra}\D$ and $\alpha\cong \At\mathfrak{Ra}\D$. In particular, $\Cm\alpha\in \Ra\CA_{\omega}$, 
$\alpha\in \At\Ra\CA_{\omega}$ and $\alpha$ is completely representable.  
\end{theorem}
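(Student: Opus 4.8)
The plan is to follow the proof of the $\CA$ analogue of this theorem proved above, replacing the neat--reduct operator $\mathfrak{Nr}_n$ by the relation--algebra reduct $\mathfrak{Ra}=\mathfrak{Nr}_2$ (with composition $x;y={\sf c}_2({\sf s}_2^1 x\cdot {\sf s}_2^0 y)$ and converse ${}_2{\sf s}(0,1)$ read off inside the dilation), and using \cite[Definition 28]{r} for the relational game $H_k$. Fix an atom $a\in\alpha$. Using \pe's \ws\ in $H_{\omega}(\alpha)$ I would first build a nested chain $M_0\subseteq M_1\subseteq\cdots$ of $\lambda$--neat hypernetworks over $\alpha$, with $M_0$ \pe's response to the opening move $a$, such that: (i) whenever $M_r(\bar x)\leq {\sf c}_i b$ for some atom $b$ and $i<n$, there are $s\geq r$ and a node $d\in\nodes(M_s)$ realising the required witness $\bar y\equiv_i\bar x$, $\bar y_i=d$, $M_s(\bar y)=b$; and (ii) every finite partial isomorphism of any $M_r$ extends, in some later $M_s$, to a partial isomorphism whose range contains $\nodes(M_r)$. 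Property (ii) is where \pe's responses to the transformation and amalgamation moves are used, together with the control on owners and envelopes of long hyperedges (as developed for relation algebras in \cite{r}). Let $\M_a=\bigcup_i M_i$.

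Next I would set up the logic. Let $L$ have a binary relation symbol for each $b\in\alpha$ and a $k$--ary symbol for each $k$--ary hyperlabel, and work in $L_{\infty,\omega}$, \emph{allowing infinitary disjunctions}; this is the point where we prove more than \cite[Theorem 39]{r}, and it is precisely what will force completeness of the resulting algebra. For fixed $f_a\in{}^{\omega}\nodes(\M_a)$ put $\U_a=\{f\in{}^{\omega}\nodes(\M_a):\{i<\omega:f(i)\neq f_a(i)\}\text{ is finite}\}$, and define a relativised $L$--structure ${\cal M}_a$ on base $\U_a$ with the usual Tarski clauses for atomic formulas, $\neg$, $\exists$, together with ${\cal M}_a,f\models\bigvee_{i\in I}\phi_i\iff(\exists i\in I)\,{\cal M}_a,f\models\phi_i$. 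Let $D_a=\{\phi^{{\cal M}_a}:\phi\text{ an }L\text{--formula}\}$, let $\D_a$ be the corresponding weak set algebra, and set $\D=\bold P_{a\in\alpha}\D_a$; then $\D$ is a generalised \emph{complete} weak set algebra of dimension $\omega$, so $\D\in\RCA_{\omega}$ and $\D$ is complete.

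The core is the identification $\alpha\cong\At\mathfrak{Ra}\D$ and $\Cm\alpha\cong\mathfrak{Ra}\D$. Given $0\neq x\in\mathfrak{Ra}\D$, it has a nonzero component $\pi_a(x)=\phi(x_{i_0},\dots,x_{i_{k-1}})^{{\cal M}_a}\in\mathfrak{Nr}_2\D_a$; picking $f$ in it with ${\cal M}_a,f\models b(x_0,x_1)$ for some atom $b$, I would prove $b(x_0,x_1)^{{\cal M}_a}\subseteq\phi^{{\cal M}_a}$: if ${\cal M}_a,g\models b(x_0,x_1)$ then $\{(f(0),g(0)),(f(1),g(1))\}$ is a partial isomorphism of $\M_a$ --- here one uses that short hyperedges are constantly labelled by $\lambda$ --- which by (ii) extends to a finite partial isomorphism $\theta$ of $\M_a$ whose domain covers $f(i_0),\dots,f(i_{k-1})$; transporting $g$ by $\theta$ off its support and using that $\phi^{{\cal M}_a}\in\mathfrak{Nr}_2\D_a$, an induction on quantifier rank (the infinitary disjunctions being handled clause--by--clause) gives $g\in\phi^{{\cal M}_a}$. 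Hence the nonzero elements of $\mathfrak{Ra}\D$ sit above the elements $\iota_a(b(x_0,x_1)^{{\cal M}_a})$, these are exactly its atoms, and $b\mapsto(b(x_0,x_1)^{{\cal M}_a}:a\in\alpha)$ is an isomorphism of relation--algebra atom structures onto $\alpha$; that it respects composition and converse follows from reading the definitions $x;y={\sf c}_2({\sf s}_2^1 x\cdot{\sf s}_2^0 y)$ and ${}_2{\sf s}(0,1)$ inside ${\cal M}_a$ together with the triangle--consistency built into hypernetworks. Finally, completeness of $\D$ and complete additivity of the operations give that $\mathfrak{Ra}\D$ is complete, so the dense canonical embedding $\mathfrak{Ra}\D\hookrightarrow\Cm\alpha$ must be onto, i.e.\ $\Cm\alpha\cong\mathfrak{Ra}\D\in\Ra\CA_{\omega}$. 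The remaining claims are then immediate: $\alpha\in\At\Ra\CA_{\omega}$, and since $\D$ is a generalised complete weak set algebra, $\mathfrak{Ra}\D\cong\Cm\alpha$ is completely representable, whence $\alpha$ is completely representable.

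The step I expect to be the main obstacle is the inclusion $b(x_0,x_1)^{{\cal M}_a}\subseteq\phi^{{\cal M}_a}$ together with checking that the atom--structure isomorphism respects composition and converse. This is where the whole game machinery must be deployed carefully --- owners and envelopes of long hyperedges, the reduction of amalgamation moves to cylindrifier moves, the constant labelling of short hyperedges --- and where the passage from $n$--dimensional hypernetworks to the relation--algebra reduct (whose ``dimensions'' are only $0,1$, but whose composition covertly uses a third node) must be matched against the definition of $;$ via ${\sf c}_2$. Everything else is bookkeeping parallel to the cylindric case.
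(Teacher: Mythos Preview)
Your approach is essentially the paper's: build the nested chain of $\lambda$--neat hypernetworks from \pe's \ws, pass to $L_{\infty,\omega}$ to force completeness of the product $\D=\bold P_{a\in\alpha}\D_a$, identify $\alpha\cong\At\mathfrak{Ra}\D$ via the atom argument, and then use completeness of $\mathfrak{Ra}\D$ together with density in $\Cm\alpha$ to get $\Cm\alpha\cong\mathfrak{Ra}\D$.

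Two minor corrections. First, your witnessing condition (i) is copied verbatim from the $\CA$ case (``$M_r(\bar x)\leq{\sf c}_ib$ for $i<n$''), but there is no ambient $n$ here; in the relation--algebra game $H_\omega$ of \cite[Definition 28]{r} the correct formulation is: if $M_r(x,y)\leq a;b$ for atoms $a,b$, then there are $s\geq r$ and $z\in\nodes(M_s)$ with $M_s(x,z)=a$ and $M_s(z,y)=b$. Second, the owners/envelopes machinery and the ``reduction of amalgamation to cylindrifier moves'' that you invoke are not part of this proof at all --- those devices are used elsewhere in the paper to \emph{establish} that \pe\ has a \ws\ in specific instances (e.g.\ for the rainbow algebras), whereas here the \ws\ is a hypothesis and the nested chain is obtained simply by scheduling all of \pa's possible moves. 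With these adjustments your argument matches the paper's.
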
 
\begin{proof}
Fix some $a\in\alpha$. As shown in \cite{r}, the game $H_{\omega}$ is designed so that using \pe\ s \ws\ in the game $H_{\omega}(\alpha)$ 
one can define a
nested sequence $M_0\subseteq M_1,\ldots$ of $\lambda$--neat networks
where $M_0$ is \pe's response to the initial \pa-move $a$ such that: 
If $M_r$ is in the sequence and $M_r(x,y)\leq a;b$ for an atoms $a$ and $b$
then there is $s\geq r$ and a witness $z\in\nodes(M_s)$
such that  $M_s(x,z)=a$ and $M_s(z, y)=b$.
In addition, if $M_r$ is in the sequence and $\theta$ is any partial
isomorphism of $M_r$, then there is $s\geq r$ and a
partial isomorphism $\theta^+$ of $M_s$ extending $\theta$ such that
$\rng(\theta^+)\supseteq\nodes(M_r)$. 
Now let $M_a$ be the limit of this sequence as defined in \cite{r}.
Let $L$ be the signature with one binary relation for
each $b\in\alpha$, and one $k$--ary predicate symbol for
each $k$--ary hyperlabel $\lambda$.
{\it We work in $L_{\infty, \omega}.$}
For fixed $f_a\in\;^\omega\!\nodes(M_a)$, let
$\U_a=\set{f\in\;^\omega\!\nodes(M_a):\set{i<\omega:g(i)\neq
f_a(i)}\mbox{ is finite}}$.
One makes $\U_a$ into the base of an $L$ relativized structure 
${M}_a$ like in \cite[Theorem 29]{r} except that we allow a clause for infinitary disjunctions.
We are now
working with (weak) set algebras  whose semantics are induced by $L_{\infty, \omega}$ formulas in the signature $L$,
instead of first order ones.
For any such $L$-formula $\phi$, write $\phi^{{M}_a}$ for
$\set{f\in\U_a: {M}_a, f\models\phi}.$
Let
$D_a= \set{\phi^{{M}_a}:\phi\mbox{ is an $L$-formula}}$ and
$\D_a$ be the weak set algebra with universe $D_a$. 
Let $\D=\bold P_{a\in \alpha} \D_a$. Then $\D\in \sf Gws_{\omega}$ and furthermore $\D$ is complete.
Suprema exists in $\D$  because we chose to work with $L_{\infty, \omega}$ while forming the dilations $\D_a$ $(a\in \alpha)$. 
Each $\D_a$ is complete, hence so is 
their product $\D$.
Now $\alpha\cong \At\mathfrak{Ra}\D$ as proved in \cite[Theorem 39]{r}. We show that $\Cm\alpha\cong \mathfrak{Ra}\D$.
Since $\D$ is complete,  then 
$\mathfrak{Ra}\D$ is complete.
Making the legitamite identification  $\mathfrak{Ra}\D\subseteq_d \Cm\alpha$, by density
we get that  $\mathfrak{Ra}\D=\Cm\alpha$ because $\mathfrak{Ra}\D$ is complete,  
so $\Cm\alpha\in \Ra\CA_{\omega}$. 
Now $\Cm\alpha\in \bold S_d\Ra\CA_{\omega}(\subseteq \bold S_c\Ra\CA_{\omega}$)
and $\alpha$ is countable, so $\alpha$ is completely representable. 
\end{proof}
\begin{corollary}\label{last}  Let $\bold O\in \{\bold S_c, \bold S_d, \bold I\}$ and $m\geq 5$. 
Then the class of frames ${\sf L}_m=\{\F: \Cm\F\in \bold O\Ra\CA_m\}$ is not elementary.  Furthermore, any class $\sf K$ such that 
$\bold S_d\Ra\CA_{\omega}\cap {\sf CRRA}\subseteq {\sf  K}\subseteq \bold S_c\Ra\CA_{6}$, $\sf K$ is not elementary.
\end{corollary}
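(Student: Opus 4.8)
The plan is to run the relation-algebra analogue of the argument of Theorem~\ref{iii} and of item~(2) of Theorem~\ref{truncate}, now using the relation-algebra Theorem~\ref{gripneat} stated just above (the one producing a \emph{complete} $\D\in\RCA_\omega$ with $\Ra\D\cong\Cm\alpha$) in place of the $\CA$ gripping lemma. Fix the relation-algebra rainbow $\R_{\Z,\N}$ with greens indexed by $\Z$ and reds indexed by $\N$, the $\RA$ counterpart of the algebra $\C_{\Z,\N}$ of \cite{mlq} used in Theorem~\ref{iii}, taken (like $\C_{\Z,\N}$) to be the full complex algebra of its atom structure, so $\R_{\Z,\N}=\Cm\At\R_{\Z,\N}$; see \cite{r}. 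The two facts needed about it are: (a)~\pe\ has a \ws\ in the relation-algebra hyperedge game $H_k(\At\R_{\Z,\N})$ of \cite[Definition~28]{r} for every finite $k$; and (b)~\pa\ has a \ws\ in the finite-node atomic game on $\At\R_{\Z,\N}$ which, by the relation-algebra counterpart of Lemma~\ref{n}, witnesses $\R_{\Z,\N}\notin\bold S_c\Ra\CA_5$, hence $\R_{\Z,\N}\notin\bold O\Ra\CA_m$ for every $\bold O\in\{\bold S_c,\bold S_d,\bold I\}$ and every $m\ge 5$ (in particular $\R_{\Z,\N}\notin\bold S_c\Ra\CA_6$).

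For~(a) one copies \pe's strategy from the proof of Theorem~\ref{iii}: she maintains a finite, strictly order-preserving, widely spaced partial map $\rho_s:\Z\to\N$ (consecutive values at gap $\ge 3^{k-r}$ after $r$ of $k$ rounds), reads the red label of an edge between two cone apices over a common base off $\rho_s$, uses whites and shades of yellow elsewhere, keeps the constant hyperlabel on short hyperedges, gives long hyperedges incident with a new node fresh hyperlabels, and handles \pa's amalgamation and transformation moves via the owner/envelope bookkeeping of the \textbf{Claim} in the proof of Theorem~\ref{iii} (which reduces amalgamation moves to cylindrifier moves) --- the only change being the passage from the $\CA$ cylindrifier move to the $\RA$ composition move; order-preservation of $\rho_s$ ensures that no monochromatic red triangle and no forbidden green--green--red triple is created, so every newly completed triangle is consistent. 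Then, exactly as in Theorem~\ref{iii}, via ultrapowers and an elementary chain argument as in \cite[Theorem~3.3.5]{HHbook2}, one obtains a \emph{countable} atomic $\B\in\RA$ with $\B\equiv\R_{\Z,\N}$ and with \pe\ winning $H_\omega(\At\B)$.

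Now the relation-algebra Theorem~\ref{gripneat} above applies to the countable atom structure $\alpha=\At\B$: there is a complete $\D\in\RCA_\omega$ with $\Cm\alpha\cong\Ra\D$ and $\alpha\cong\At\Ra\D$, so $\Cm\At\B\in\Ra\CA_\omega$ and $\At\B$ is a completely representable atom structure. Since $\B\subseteq_d\Cm\At\B$ we get $\B\in\bold S_d\Ra\CA_\omega$, and since $\B$ is atomic with completely representable atom structure (equivalently, since $\B$ is a countable atomic member of $\bold S_c\Ra\CA_\omega$), $\B\in\sf CRRA$. Thus $\B\in\bold S_d\Ra\CA_\omega\cap\sf CRRA$. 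Also, using that $\Ra\Nr_m\D=\Ra\D$, $\Cm\At\B\in\Ra\CA_\omega\subseteq\Ra\CA_m\subseteq\bold O\Ra\CA_m$ for every $\bold O$ and every $m$.

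It remains to read off the two conclusions. For the ``furthermore'' clause, let $\sf K$ satisfy $\bold S_d\Ra\CA_\omega\cap\sf CRRA\subseteq\sf K\subseteq\bold S_c\Ra\CA_6$; then $\B\in\sf K$ while $\R_{\Z,\N}\notin\bold S_c\Ra\CA_6\supseteq\sf K$ by~(b), and $\B\equiv\R_{\Z,\N}$, so $\sf K$ is not elementary. For ${\sf L}_m$ with $\bold O$ fixed and $m\ge 5$: the frame $\At\B$ lies in ${\sf L}_m$ because $\Cm\At\B\in\bold O\Ra\CA_m$, whereas $\At\R_{\Z,\N}$ does not, because $\Cm\At\R_{\Z,\N}=\R_{\Z,\N}\notin\bold O\Ra\CA_m$ by~(b); and since $\B\equiv\R_{\Z,\N}$ and the atom structure of an atomic relation algebra is first-order interpretable in the algebra, $\At\B\equiv\At\R_{\Z,\N}$. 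Hence ${\sf L}_m$ is not elementary. The step I expect to be the main obstacle is~(a): transporting the hyperedge-game strategy, and in particular the owner/envelope bookkeeping that tames the amalgamation moves, from the $\CA$ game of Theorem~\ref{iii} to the $\RA$ game $H_k$ of \cite{r}, together with pinning down the precise node count in~(b) needed to reach $\bold S_c\Ra\CA_5$ rather than only $\bold S_c\Ra\CA_6$; both are delicate but routine variants of material already present in this paper and in \cite{r,mlq}.
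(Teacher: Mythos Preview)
Your argument is correct and follows essentially the same route as the paper's proof. The paper's proof is terser: instead of re-deriving (a) by porting \pe's strategy from Theorem~\ref{iii}, it simply cites \cite{r} for both (a) and (b) (specifically that \pe\ wins $H_k(\At\A_{\Z,\N})$ for all finite $k$ and \pa\ wins $F^5$, invoking \cite[Lemma~2.6]{r} for $\A_{\Z,\N}\notin\bold S_c\Ra\CA_5$), then runs exactly your ultrapower/elementary-chain step to obtain a countable $\alpha$ with $\Tm\alpha\equiv\A_{\Z,\N}$, applies the relation-algebra Theorem~\ref{gripneat} to get $\Cm\alpha\in\Ra\CA_\omega$ and $\Tm\alpha\in\bold S_d\Ra\CA_\omega\cap{\sf CRRA}$, and reads off both conclusions from the pair $\Tm\alpha$, $\A_{\Z,\N}$; the paper handles ${\sf L}_m$ by passing through the algebra class $\bold L_m=\{\A\in\RA\cap{\bf At}:\Cm\At\A\in\bold O\Ra\CA_m\}$ rather than your explicit interpretability remark, but this is only a cosmetic difference.
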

\begin{proof} Let $\bold L_m=\{\A\in \RA_n\cap {\bf At}: \Cm\At\A\in \bold O\Ra\CA_m\}$. 
Take the relation algebra atom structure $\beta$ based on $\N$ and $\Z$ as defined in \cite{r}, 
for  which \pe\ has a \ws\ in $H_k(\At\A_{\Z, \N})$, where $\A_{\Z, \N}=\Cm\beta$ for all $k<\omega$, and \pa\ has a \ws\ in $F^5(\beta)$ with $F^5$ as in \cite[Definition 28]{r}.
By \cite[Lemma 2.6]{r}, we get that $\A_{\Z, \N}\notin \bold S_c\Ra\CA_5$. 
The usual argument of taking ultrapowers followed by an elementary chain argument, one gets 
a countable atom structure $\alpha$, such that $\Tm\alpha\equiv \A_{\Z, \N}$ and  
\pe\ has a \ws\ in $H(\alpha)$.
By Theorem \ref{gripneat}, there is 
$\D\in \CA_{\omega}$, such that $\alpha\equiv \beta$, $\alpha\cong \At\D$ and $\Tm\alpha\subseteq_d \mathfrak{Ra}\D$, so that  
$\Tm\alpha\in \bold S_d\Ra\CA_{\omega}$. 
Since $\A_{\Z, \N}\notin \bold S_c\Ra\CA_{5}$,  then 
$\A_{\Z, \N}=\Cm\At\C_{\Z, \N}\notin {\bold L}_m$, $\A_{\Z, \N}\equiv \Tm\alpha$ 
and $\Tm\alpha\in {\bold L}_m$ 
because $\Cm\alpha\in \Ra\CA_{\omega}$. For the second required one 
uses the same elementary equivalent algebras $\Tm\alpha\in \bold S_d\Ra\CA_{\omega}$ and $\A_{\Z,\N}\notin \bold S_c\Ra\CA_5$. 
\end{proof}
In the next table we summarize the results hitherto obtained on first order definability: 
\vskip1mm
\begin{tabular}{|l|c|c|c|c|c|c|}    \hline
					Algebras                    &Elementary        &Citation                              \\

                                                               \hline
                                                                          $\Nr_n\CA_{\omega}\cap {\sf CRCA}_n\subseteq \bold K\subseteq \bold S_d\Nr_n\CA_{n+1}$&no& Last item of Thm \ref{iii} \\

                                                               \hline
                                                                          $\Nr_n\CA_{\omega}\cap {\sf CRCA}_n\subseteq \bold K\subseteq \bold S_c\Nr_n\CA_{n+3}$&no& Last item  of Thm \ref{iii}\\

                                                                           \hline
                                                                          $\At(\bold \Nr_n\CA_{\omega}\cap {\sf CRCA}_n)\subseteq \bold K\subseteq \bold \At \bold S_c\Nr_n\CA_{n+3}$  &no& From result in previous row\\

                                                                           \hline
                                                                          $\Nr_n\CA_{\omega}\subseteq \bold K\subseteq \bold \Nr_n\CA_{n+1}$   &no &\cite{IGPL},  \cite[Theorem 5.4.2]{Sayedneat}\\

                                                               \hline
                                                                          $\bold S_c\Ra\CA_{\omega}\cap {\sf CRRA}\subseteq \bold K\subseteq \bold S_c\Ra\CA_{6}$&no& \cite{r2}\\

                                                               \hline
                                                                          $\bold S_d\Ra\CA_{\omega}\cap {\sf CRRA}\subseteq \bold K\subseteq \bold S_c\Ra\CA_{6}$&no& Corollary \ref{last} \\

                                                               \hline
                                                                          $\Ra\CA_{\omega}\cap {\sf CRRA}_n\subseteq \bold K\subseteq \bold S_c\Ra\CA_{6}$&?& \\

                                                                           \hline
                                                                          $\At(\Ra\CA_{\omega}\cap {\sf CRRA})\subseteq \bold K\subseteq \At \bold S_c\Ra\CA_{6}$  &no&  Corollary \ref{last}, cf.\cite{r}\\

\hline

\end{tabular}
\vskip2mm
The result in row seven is stronger than the result proved in \cite{r2} because 
$\bold S_d\leq \bold S_c$.
Neither of the results in the second and third row is superfluous 
for the two classes $\bold S_c\Nr_n\CA_{n+3}$ and $\bold S_d\Nr_n\CA_{n+1}$ are mutually distinct: 
The algebra $\B$ in the last item of Theorem \ref{iii} is completely representable, so 
$\B\in \bold S_c\Nr_n\CA_{n+3}\sim \bold S_d\Nr_n\CA_{n+1}$.
Conversely, in \cite[\S2]{t}, a finite algebra $\A\in \Nr_n\CA_{n+1}\sim \bold S\Nr_n\CA_{n+2}(\subseteq \bold S_d\Nr_n\CA_{n+1}\sim \bold S_c\Nr_n\CA_{n+3}$) is constructed. 
From the no's in the sixth and last row we cannot answer (negatively) the only question mark in the table. This would be a far too hasty decision (taken mistakenly in \cite{r} and corrected in \cite{r2}). 
The implication $\At\R\in \At\Ra\CA_{\omega}$ and $\Cm\At\R\in \Ra\CA_{\omega}\implies \R\in \Ra\CA_{\omega}$ may not be valid.  
Indeed, the algebra $\B$ used in item (1) of Theorem \ref{square}
confirms our doubts, 
since $\At\B\in \At\Nr_n\CA_{\omega}$, $\Cm\At\B\in \Nr_n\CA_{\omega}$, but $\B\notin \Nr_n\CA_{\omega}$. 
 
Unless otherwise explicitly indicated, fix $2<n<\omega$:
The last example motivates:
\begin{definition}\label{grip} 
\begin{enumerate}
\item A $\CA_n$ atom structure $\bf At$ is {\it weakly representable} if there is an $\A\in \RCA_n$ such that $\At\A=\bf At$.  
The $\CA_n$ atom structure $\bf At$ is {\it strongly representable} if every $\A\in \RCA_n$ such that $\At\A={\bf At}\implies \A\in \RCA_n$. 
\item The  class $\bold K$ is {\it gripped by its atom structures} or simply {\it gripped,}
if for $\A\in \CA_n$, whenever $\At\A\in \At \bold K$, then $\A\in \bold K$.
An $\omega$--rounded game $\bold H$ {\it grips} $\bold K$, if whenever  $\A\in\CA_n$ is atomic with countably many atoms and \pe\ has a \ws\ in $\bold H(\A)$,
then $\A\in \bold K$. The game $\bold H$ {\it weakly grips} $\bold K$, if whenever  $\A\in\CA_n$ is atomic with countably many atoms and
\pe\ has a \ws\ in $\bold H(\At\A)$, then $\At\A\in \At\bold K$. The game $\bold H$ {\it densely grips} $\bold K$,  if whenever  $\A\in\CA_n$ is atomic with countably many atoms and
\pe\ has a \ws\ in $\bold H(\At\A)$, then $\At\A\in \At\bold K$ and $\Cm\At\A\in \bold K$.

\end{enumerate}
\end{definition}
Let $2<n<m\leq \omega$. It is is known that a $\CA_n$ atom structure $\bf At$ is weakly representable $\iff$ $\Tm\bf At$ (the term algebra which is the subalgebra of the complex algebra 
$\Cm\bf At$ generated by the atoms)
is representable; 
it is strongly representable $\iff$ $\Cm\bf At$ is representable. 
\cite{mlq, Hodkinson}. 
The classes $\RCA_n$ and $\Nr_n\CA_m$ are not gripped, by \cite{Hodkinson} and first item of Theorem \ref{square}. 
In \cite{Hodkinson} a weakly representable $\CA_n$ atom structure that is not strongly representable is constructed, showing that $\RCA_n$ is 
not closed under \de\ completions, because $\Cm\bf At$ is the \de\ completion of 
$\Tm\bf At$ in the case of completely additive varieties of $\sf BAO$s. But we can go even further:   

\begin{theorem} For $m\geq m+3$, the variety $\bold S\Nr_n\CA_m$ is not gripped.
\end{theorem}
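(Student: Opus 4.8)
The plan is to reduce the assertion to the failure of atom--canonicity of $\bold S\Nr_n\CA_{n+3}$ and then to obtain that failure by a blow up and blur construction of the same kind as the term--algebra constructions in Theorems~\ref{bsl}, \ref{raa} and~\ref{iii}. For the reduction, I would first observe that any class $\bold L$ of $\CA_n$s closed under $\bold S$ --- and $\bold S\Nr_n\CA_m$ is one by construction --- is gripped if and only if it is \emph{atom--canonical}, in the sense that $\Tm\alpha\in\bold L$ implies $\Cm\alpha\in\bold L$ for every $\CA_n$ atom structure $\alpha$: if $\bold L$ is gripped and $\Tm\alpha\in\bold L$, then $\At\Cm\alpha=\alpha=\At\Tm\alpha\in\At\bold L$, so $\Cm\alpha\in\bold L$; conversely, for atomic $\A\in\CA_n$ with $\At\A\in\At\bold L$, fix $\B\in\bold L$ with $\At\B=\At\A$, note $\Tm\At\A=\Tm\At\B\subseteq\B$ so that $\Tm\At\A\in\bold L$, conclude $\Cm\At\A\in\bold L$ by atom--canonicity, and finally $\A\in\bold S\{\Cm\At\A\}\subseteq\bold L$ because $\A$ embeds into $\Cm\At\A$. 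Since $\Nr_n\CA_m\subseteq\Nr_n\CA_{n+3}$, hence $\bold S\Nr_n\CA_m\subseteq\bold S\Nr_n\CA_{n+3}$, for every $m\ge n+3$, it is then enough to produce a single countable $\CA_n$ atom structure $\alpha$ with $\Tm\alpha\in\RCA_n$ (whence $\Tm\alpha\in\bold S\Nr_n\CA_m$ for all $m$) and $\Cm\alpha\notin\bold S\Nr_n\CA_{n+3}$ (whence $\Cm\alpha\notin\bold S\Nr_n\CA_m$ for all $m\ge n+3$); the atomic algebra $\Cm\alpha$ then witnesses non--grippedness of every such $\bold S\Nr_n\CA_m$ at once, and this is exactly the strengthening of the $\RCA_n=\bold S\Nr_n\CA_\omega$ case alluded to just before the statement.

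For the construction I would fix a \emph{finite} $\F\in\CA_n$ with $\F\notin\bold S\Nr_n\CA_{n+3}$ that admits a blur --- such algebras are what underlie the known failure of atom--canonicity of $\bold S\Nr_n\CA_{n+3}$; a concrete candidate is (an adaptation of) the finite algebra of \cite[\S2]{t}, which even lies outside $\bold S\Nr_n\CA_{n+2}$, or a finite rainbow $\CA_n$ over small ordinals. Blow $\F$ up by splitting each atom into countably many copies, adjoin the blurring reds, and let the $\CA_n$ accessibility relations on the new atoms be inherited from those of $\F$, obtaining a countable atom structure $\alpha$ as in the proofs of Theorems~\ref{bsl} and~\ref{iii}. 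The abundance of reds allows \pe\ to answer any cylindrifier move without forming a monochromatic triangle, so $\alpha$ satisfies the Lyndon conditions and the finite--support algebra $\Tm\alpha$ is representable --- the blur is precisely what conceals the finite obstruction from $\Tm\alpha$, just as in those proofs. In $\Cm\alpha$, on the other hand, the joins $\widehat a=\sum\{a^i:i<\omega\}$ of the copies of each atom $a$ of $\F$ exist, and by the design of the blow up $a\mapsto\widehat a$ extends to an embedding of $\Cm\At\F\cong\F$ into $\Cm\alpha$; as $\bold S\Nr_n\CA_{n+3}$ is closed under $\bold S$ and $\F\notin\bold S\Nr_n\CA_{n+3}$, this gives $\Cm\alpha\notin\bold S\Nr_n\CA_{n+3}$, and the reduction finishes the proof.

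The main obstacle is the pairing of $\F$ with a usable blur: $\F$ has to be far enough from $\bold S\Nr_n\CA_{n+3}$ to obstruct neat embeddability of the completion into any $\CA_{n+3}$, while the blown--up structure must still carry a representable term algebra --- the same tension that the rainbow parameters balance throughout this paper. The routine but lengthy parts are checking that \pe's labellings really yield $\CA_n$--networks and that the blur does make $\Tm\alpha$ representable (a back--and--forth / amalgamation argument in the style of Theorem~\ref{bsl}). One could instead avoid the explicit embedding of $\F$ by showing that \pa\ wins the $(n+3)$--node game on $\alpha$ and appealing to the game characterisation of $\bold S\Nr_n\CA_{n+3}$ for algebras with countably many atoms (compare Lemma~\ref{n}), but the embedding argument is the cleaner route.
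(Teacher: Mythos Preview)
Your overall plan coincides with the paper's: the paper too reduces to exhibiting an atom structure $\bf At$ with $\Tm{\bf At}\in\RCA_n$ and $\Cm{\bf At}\notin\bold S\Nr_n\CA_{n+3}$, obtains $\bf At$ by blowing up a finite algebra (the rainbow $\A_{n+1,n}$), and concludes via an explicit embedding of $\A_{n+1,n}$ into $\Cm{\bf At}$ together with a \ws\ for \pa\ in the $(n+3)$--node game on $\At(\A_{n+1,n})$. Your gripped\,$\Leftrightarrow$\,atom--canonical reduction for $\bold S$--closed classes is correct and is left implicit in the paper.

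There is, however, a genuine error in your argument for representability of $\Tm\alpha$. You assert that ``the abundance of reds allows \pe\ to answer any cylindrifier move \ldots\ so $\alpha$ satisfies the Lyndon conditions''. That is impossible: if $\alpha$ satisfied the Lyndon conditions then $\Cm\alpha\in{\sf LCA}_n\subseteq\RCA_n\subseteq\bold S\Nr_n\CA_{n+3}$, contradicting precisely what you need. You are conflating the construction of Theorem~\ref{bsl} (where infinitely many reds \emph{do} let \pe\ win the atomic games, the algebra \emph{does} satisfy the Lyndon conditions, and consequently lies in $\Nr_n\CA_\omega$ --- useless for the present purpose) with Hodkinson's blow--up--and--blur, which is what the paper actually uses. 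In the latter the reds of $\bf At$ are \emph{not} enough for \pe; \pa's cone bombardment still wins on $\bf At$ in finitely many rounds. What makes $\Tm{\bf At}$ representable is an \emph{external} shade of red $\rho$, outside the signature of $\bf At$: one builds an $n$--homogeneous model $\Mo$ in the signature enlarged by $\rho$ (with \pe\ playing $\rho$ when cornered), and then shows via an $n$--back--and--forth system that classical and $W$--relativised semantics agree for first--order rainbow formulas, where $W\subseteq{}^n\Mo$ is the set of $\rho$--free tuples. This yields a set--algebra representation of $\Tm{\bf At}$ with unit $W$. The point is that $\rho$ is invisible to $L_n$ but visible to $L_{\infty,\omega}^n$, which is exactly why $\Tm{\bf At}$ is representable while $\Cm{\bf At}$ is not.

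A secondary issue: your suggested $\F$ from \cite[\S2]{t} is not known to support such a blur; the paper's choice of the rainbow $\A_{n+1,n}$ is doing real work, since the rainbow colour scheme is what allows the $\rho$--trick to go through (cf.\ \cite{Hodkinson}).
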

\begin{proof} It clearly suffices to  construct an atom structure $\bf At$ such that $\Tm{\bf At}\in \RCA_n$ but $\Cm{\bf At}\notin \bold S\Nr_n\CA_{n+3}$.
We divide the proof into four parts:

1: {\bf Blowing up and blurring  $\A_{n+1, n}$ forming a weakly representable atom structure $\bf At$}:
Take the finite rainbow ${\sf CA}_n$,  $\A_{n+1, n}$
where the reds $\sf R$ is the complete irreflexive graph $n$, and the greens
are  $\{\g_i:1\leq i<n-1\}
\cup \{\g_0^{i}: 1\leq i\leq n+1\}$, endowed with the quasi--polyadic equality operations.
Denote the finite atom structure of $\A_{n+1, n}$ by ${\bf At}_f$; 
so that ${\bf At}_f=\At(\A_{n+1, n})$.
One  then replaces the red colours 
of the finite rainbow algebra of $\A_{n+1, n}$ each by  infinitely many reds (getting their superscripts from $\omega$), obtaining this way a weakly representable atom structure $\bf At$.
The resulting atom structure after `splitting the reds', namely, $\bf At$,  is 
like the weakly (but not strongly) representable 
atom structure of the atomic, countable and simple algebra $\A$ as defined in \cite[Definition 4.1]{Hodkinson}; the sole difference is that we have $n+1$ greens
and not $\omega$--many as is the case in \cite{Hodkinson}. 
We denote the resulting term cylindric algebra $\Tm\bf At$ by ${{\mathfrak{Bb}}}(\A_{n+1, n}, \r, \omega)$ short hand for  blowing 
up and blurring $\A_{n+1,n}$ by splitting each {\it red graph (atom)} into $\omega$ many.  
It can be shown exactly like in \cite{Hodkinson} that \pe\ can win the rainbow $\omega$--rounded game
and build an $n$--homogeneous model $\Mo$ by using a shade of red $\rho$ {\it outside} the rainbow signature, when
she is forced a red;  \cite[Proposition 2.6, Lemma 2.7]{Hodkinson}. The $n$-homogeniuty  entails that any subgraph (substructure)
of $\Mo$ of size $\leq n$, is independent of its location in $\Mo$;
it is uniquely determined by its isomorphism type.
One proves like in {\it op.cit} 
that  ${{\mathfrak{Bb}}}(\A_{n+1, n}, \r, \omega)$ is representable as a set algebra having top element $^n\Mo$. 
We give more details. In the present context, after the splitting `the finitely many red colours' replacing each such red colour $\r_{kl}$, $k<l<n$  by $\omega$ many 
$\r_{kl}^i$, $i\in \omega$, the rainbow signature for the resulting rainbow theory as defined in \cite[Definition 3.6.9]{HHbook} call this theory $T_{ra}$,
 consists of $\g_i: 1\leq i<n-1$, $\g_0^i: 1\leq i\leq n+1$,
$\w_i: i<n-1$,  $\r_{kl}^t: k<l< n$, $t\in \omega$,
binary relations, and $n-1$ ary relations $\y_S$, $S\subseteq_{\omega} n+k-2$ or $S=n+1$. 
The set algebra ${\mathfrak{Bb}}(\A_{n+1, n}, \r, \omega)$ of dimension $n$ has
base an $n$--homogeneous  model $\Mo$ of another theory $T$ whose signature expands that of 
$T_{ra}$ by an additional binary relation (a shade of red) $\rho$.  
In this new signature $T$ is obtained from $T_{ra}$ by 
some axioms  (consistency conditions) extending $T_{ra}$. Such axioms (consistency conditions) 
specify consistent triples involving $\rho$. We call the models of $T$ {\it extended} coloured graphs. 
In particular, $\Mo$ is an extended coloured graph.
To build $\Mo$, the class of coloured graphs is considered in
the signature $L\cup \{\rho\}$ like in uual rainbow constructions as given above with the two additional forbidden triples
$(\r, \rho, \rho)$ and $(\r, \r^*, \rho)$, where $\r, \r^*$ are any reds. 
This model $\Mo$ is constructed as a countable limit of finite models of $T$ 
using a game played between \pe\ and \pa.   Here, unlike the extended $L_{\omega_1, \omega}$ theory 
dealt with in \cite{Hodkinson},  $T$ is a {\it first order one}
because the number of greens used are finite.
In the rainbow game \cite{HH}  \S 4.3,
\pa\ challenges \pe\  with  {\it cones} having  green {\it tints $(\g_0^i)$}, 
and \pe\ wins if she can respond to such moves. This is the only way that \pa\ can force a win.  \pe\ 
has to respond by labelling {\it appexes} of two succesive cones, having the {\it same base} played by \pa.
By the rules of the game, she has to use a red label. She resorts to  $\rho$ whenever
she is forced a red while using the rainbow reds will lead to an inconsitent triangle of reds;  \cite[Proposition 2.6, Lemma 2.7]{Hodkinson}. 
The indicative term 'blow up and blur' was introduced in \cite{ANT}. The 'blowing up' refers to `splitting each red atom of $\A_{n+1, n}$ into infinitely many subatoms called its copies 
obtaining the new weakly represenatle atom structure $\b At$. The term `blur' refers to the
fact that the algebraic structure of $\A_{n+1, n}$ is `disorganized' in $\Tm\bf At$; it is completely distorted by redefining the operations to the point that the original opertions of $\A_{n+1, n}$ 
is blurred; it is no longer there. 
Nevertheless, the algebraic structure of $\A_{n+1, n}$ reappears in $\Cm\bf At$, the \de\ conpletion of $\Tm\bf At$, in the sense 
that $\A_{n+1, n}$ embeds ino $\Cm\bf At$ as will be shown in a while. The completion (supremma exists) of $\Cm\bf At$ plays a key role, 
because the embedding is done by mapping every atom to the suprema of its copies. This suprema may, and indeed in some of the cases does not exist in
$\Tm\bf At$.

2. {\bf Representing $\Tm\At\A$ (and its completion) as (generalized) set algebras:} From now on, forget about $\rho$; 
having done its task as a colour  to (weakly) represent $\bf At$, it will play no further role.
Having $\Mo$ at hand, one constructs  two atomic $n$--dimensional set algebras based on $\Mo$, sharing the same atom structure and having 
the same top element.  
The atoms of each will be the set of coloured graphs, seeing as how, quoting Hodkinson \cite{Hodkinson} such coloured graphs are `literally indivisible'. 
Now $L_n$ and $L_{\infty, \omega}^n$ are taken in the rainbow signature (without $\rho$). Continuing like in {\it op.cit}, deleting the one available red shade, set
$W = \{ \bar{a} \in {}^n\Mo : \Mo \models ( \bigwedge_{i < j <n} \neg \rho(x_i, x_j))(\bar{a}) \},$
and for $\phi\in L_{\infty, \omega}^n$, let
$\phi^W=\{s\in W: \Mo\models \phi[s]\}.$
Here $W$ is the set of all $n$--ary assignments in
$^n\Mo$, that have no edge labelled by $\rho$.
Let $\A$  be the relativized set algebra with domain
$\{\varphi^W : \varphi \,\ \textrm {a first-order} \;\ L_n-
\textrm{formula} \}$  and unit $W$, endowed with the
usual concrete quasipolyadic operations read off the connectives.
Classical semantics for {\it $L_n$ rainbow formulas} and their
semantics by relativizing to $W$ coincide \cite[Proposition 3.13]{Hodkinson} {\it but not with respect to 
$L_{\infty,\omega}^n$ rainbow formulas}.
This depends essentially on \cite[Lemma 3.10]{Hodkinson}, which is the heart and soul of the proof in \cite{Hodkinson}, and for what matters this proof.
The referred to lemma says that any permutation $\chi$ of $\omega\cup \{\rho\}$,
$\Theta^{\chi}$ as defined in  \cite[Definitions 3.9, 3.10]{Hodkinson} is an $n$ back--and--forth system
induced by any permutation of $\omega\cup \{\rho\}$. 
Let $\chi$ be such a permutation. The system $\Theta^{\chi}$ consists of isomorphisms between coloured graphs such that 
superscripts of reds are `re-shuffled along $\chi$', in such a way that rainbow red labels are permuted,  
$\rho$ is replaced by a red rainbow label, and all other colours are preserved.
One uses such $n$-back-and-forth systems mapping a 
tuple $\bar{b} \in {}^n \Mo \backslash W$ to a tuple
$\bar{c} \in W$ preserving any formula in $L_{ra}$ containing the non-red symbols that are
`moved' by the system, so if $\bar{b}\in {}^n\Mo$ refutes the $L_n$ rainbow formula  $\phi$, then there is a $\bar{c}$ in $W$ 
refuting $\phi$. 
Hence the set algebra $\A$ is isomorphic to a quasi-polyadic equality  set algebra of dimension $n$ 
having top element $^n\Mo$, so $\A$
is simple, in fact its $\Df$ reduct is simple.
Let $\E=\{\phi^W: \phi\in L_{\infty, \omega}^n\}$
\cite[Definition 4.1]{Hodkinson}
with the operations defined like on $\A$ the usual way. $\Cm\bf At$ is a complete $\QEA_n$ and, so like in \cite[Lemma 5.3]{Hodkinson}
we have an isomorphism from $\Cm\bf At$  to $\E$ defined
via $X\mapsto \bigcup X$.
Since $\At\A=\At\Tm(\At\A)=\bf At$  
and $\Tm\At\A\subseteq \A$, hence $\Tm\At\A$ is representable.
The atoms of $\A$, $\Tm\At\A$ and $\Cm\At\A=\Cm \bf At$ are the coloured graphs whose edges are {\it not labelled} by $\rho$.
These atoms are uniquely determined by the interpretion in $\Mo$ of so-called $\sf MCA$ formulas in the rainbow signature of $\bf At$  as in
\cite[Definition 4.3]{Hodkinson}.

3. {\bf Embedding $\A_{n+1, n}$ into $\Cm(\At({{\mathfrak{Bb}}}(\A_{n+1, n}, \r, \omega)))$:} Let ${\sf CRG}_f$ be the class of coloured graphs on 
${\bf At}_f$ and $\sf CRG$ be the class of coloured graph on $\bf At$. We 
can (and will) assume that  ${\sf CRG}_f\subseteq \sf CRG$.
Write $M_a$ for the atom that is the (equivalence class of the) surjection $a:n\to M$, $M\in \sf CGR$.
Here we identify $a$ with $[a]$; no harm will ensue.
We define the (equivalence) relation $\sim$ on $\bf At$ by
$M_b\sim N_a$, $(M, N\in {\sf CGR}):$
\begin{itemize}
\item $a(i)=a(j)\Longleftrightarrow b(i)=b(j),$

\item $M_a(a(i), a(j))=\r^l\iff N_b(b(i), b(j))=\r^k,  \text { for some $l,k$}\in \omega,$

\item $M_a(a(i), a(j))=N_b(b(i), b(j))$, if they are not red,

\item $M_a(a(k_0),\dots, a(k_{n-2}))=N_b(b(k_0),\ldots, b(k_{n-2}))$, whenever
defined.
\end{itemize}
We say that $M_a$ is a {\it copy of $N_b$} if $M_a\sim N_b$ (by symmetry $N_b$ is a copy of $M_a$.) 
Indeed, the relation `copy of' is an equivalence relation on $\bf At$.  An atom $M_a$ is called a {\it red atom}, if $M_a$ has at least one red edge. 
Any red atom has $\omega$ many copies, that are {\it cylindrically equivalent}, in the sense that, if $N_a\sim M_b$ with one (equivalently both) red,
with $a:n\to N$ and  $b:n\to M$, then we can assume that $\nodes(N) =\nodes(M)$ 
and that for all $i<n$, $a\upharpoonright n\sim\{i\}=b\upharpoonright n\sim \{i\}$.
Any red atom has $\omega$ many copies that are {\it cylindrically equivalent}, in the sense that, if $N_a\sim M_b$ with one (equivalently both) red,
with $a:n\to N$ and  $b:n\to M$, then we can assume that $\nodes(N) =\nodes(M)$ 
and that for all $i<n$, $a\upharpoonright n\sim\{i\}=b\upharpoonright n\sim \{i\}$.
In $\Cm\bf At$, we write $M_a$ for $\{M_a\}$ 
and we denote suprema taken in $\Cm\bf At$, possibly finite, by $\sum$.
Define the map $\Theta$ from $\A_{n+1, n}=\Cm{\bf At}_f$ to $\Cm\bf At$,
by specifing first its values on ${\bf At}_f$,
via $M_a\mapsto \sum_jM_a^{(j)}$ where $M_a^{(j)}$ is a copy of $M_a$. 
So each atom maps to the suprema of its  copies.  
This map is well-defined because $\Cm\bf At$ is complete. 
We check that $\Theta$ is an injective homomorphim. Injectivity follows from $M_a\leq \Theta(M_a)$, hence $\Theta(x)\neq 0$ 
for every atom $x\in \At(\A_{n+1, n})$.
We check preservation of all the $\QEA_n$ operations.  
The Boolean join is obvious.
\begin{itemize}
\item For complementation: It suffices to check preservation of  complementation `at atoms' of ${\bf At}_f$. 
So let $M_a\in {\bf At}_f$ with $a:n\to M$, $M\in {\sf CGR}_f\subseteq \sf CGR$. Then: 
$$\Theta(\sim M_a)=\Theta(\bigcup_{[b]\neq [a]} M_b)
=\bigcup_{[b]\neq [a]} \Theta(M_b)
=\bigcup_{[b]\neq [a]}\sum_j M_b^{(j)}$$
$$=\bigcup_{[b]\neq [a]}\sim \sum_j[\sim (M_a)^{(j)}]
=\bigcup_{[b]\neq [a]}\sim \sum_j[(\sim M_b)^j]
=\bigcup_{[b]\neq [a]}\bigwedge_j M_b^{(j)}$$
$$=\bigwedge_j\bigcup_{[b]\neq [a]}M_b^{(j)}
=\bigwedge_j(\sim M_a)^{j}
=\sim (\sum M_a^j)
=\sim \Theta(a)$$

\item Diagonal elements. Let $l<k<n$. Then:
\begin{align*}
M_x\leq \Theta({\sf d}_{lk}^{\Cm{\bf At}_f})&\iff\ M_x\leq \sum_j\bigcup_{a_l=a_k}M_a^{(j)}\\
&\iff M_x\leq \bigcup_{a_l=a_k}\sum_j M_a^{(j)}\\
&\iff  M_x=M_a^{(j)}  \text { for some $a: n\to M$ such that $a(l)=a(k)$}\\
&\iff M_x\in {\sf d}_{lk}^{\Cm\bf At}.
\end{align*}

\item Cylindrifiers. Let $i<n$. By additivity of cylindrifiers, we restrict our attention to atoms 
$M_a\in {\bf At}_f$ with $a:n\to M$, and $M\in {\sf CRG}_f\subseteq \sf CRG$. Then: 

$$\Theta({\sf c}_i^{\Cm{\bf At}_f}M_a)=f (\bigcup_{[c]\equiv_i[a]} M_c)
=\bigcup_{[c]\equiv_i [a]}\Theta(M_c)$$
$$=\bigcup_{[c]\equiv_i [a]}\sum_j M_c^{(j)}=\sum_j \bigcup_{[c]\equiv_i [a]}M_c^{(j)}
=\sum _j{\sf c}_i^{\Cm\bf At}M_a^{(j)}$$
$$={\sf c}_i^{\Cm\bf At}(\sum_j M_a^{(j)})
={\sf c}_i^{\Cm\bf At}\Theta(M_a).$$

\end{itemize}

4.: {\bf \pa\ has  a  \ws\ in $G^{n+3}\At(\A_{n+1, n})$; and the required result:} It is straightforward to show that 
\pa\ has \ws\ first in the  \ef\ forth  private 
game played between \pe\ and \pa\ on the complete
irreflexive graphs $n+1$ and $n$ in 
$n+1$ rounds
${\sf EF}_{n+1}^{n+1}(n+1, n)$ \cite [Definition 16.2]{HHbook2}
since $n+1$ is `longer' than $n$. 
Here $r$ is the number of rounds and $p$ is the number of pairs of pebbles
on board. Using (any) $p>n$ many pairs of pebbles avalable on the board \pa\ can win this game in $n+1$ many rounds.
In each round $0,1\ldots n$, \pe\ places a new pebble  on  a new element of $n+1$.
The edge relation in $n$ is irreflexive so to avoid losing
\pe\ must respond by placing the other  pebble of the pair on an unused element of $n$.
After $n$ rounds there will be no such element, so she loses in the next round.
 \pa\  lifts his \ws\ from the private \ef\ forth game ${\sf EF}_{n+1}^{n+1}(n+1, n)$ to the graph game on ${\bf At}_f=\At(\A_{n+1,n})$ 
\cite[pp. 841]{HH} forcing a
win using $n+3$ nodes. 
He bombards \pe\ with cones
having  common
base and distinct green  tints until \pe\ is forced to play an inconsistent red triangle (where indicies of reds do not match).
By Lemma \ref{n}, $\A_{n+, n}\notin
\bold S_c{\sf Nr}_n\CA_{n+3}$. Since $\A_{n+1, n}$
is finite, then $\A_{n+1, n}\notin \bold S\Nr_n\CA_{n+3}$, 
because $\A_{n+1}=\A_{n+1, n}$ (its canonical extension)  and $\D\in \bold S\Nr_n\CA_{n+3}\implies \A^+\in \bold S_c\Nr_n\CA_{n+3}$.
But $\A_{n+1,n}$ embeds into $\Cm\At\A$,
hence $\Cm\At\A$
is outside the variety $\bold S{\sf Nr}_n\CA_{n+3}$, as well. By Lemma \ref{n}, the required follows.

\end{proof}

On the other hand, the class $\bold S_c\Nr_n\CA_m$ is gripped.
For any $n<\omega$, the class ${\sf CRCA}_n$, and its elementary closure, namely,
the class of algebras satisfying the Lyndon conditions as defined in 
\cite{HHbook2}   is gripped.
The usual atomic game $G$ weakly grips, densely grips and 
grips ${\sf CRCA}_n$. 

\begin{theorem}\label{grip} The game $\bold H_{\omega}$ densely grips $\Nr_n\CA_{\omega}$. 
but  does not grip $\Nr_n\CA_{\omega}$.
\end{theorem}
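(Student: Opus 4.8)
The plan is to prove two assertions about the $\omega$--rounded hypergame $\bold H_{\omega}$: first that it densely grips the class $\Nr_n\CA_{\omega}$ (in the sense of Definition \ref{grip}), and second that it fails to \emph{grip} this class. The first half is essentially a restatement of Theorem \ref{gripneat}. Suppose $\A\in\CA_n$ is atomic with countably many atoms and \pe\ has a \ws\ in $\bold H_{\omega}(\At\A)$. Applying Theorem \ref{gripneat} to the countable atom structure $\alpha=\At\A$, we obtain a complete $\D\in\RCA_{\omega}$ with $\Cm\At\A=\Cm\alpha\cong\mathfrak{Nr}_n\D$ and $\At\A=\alpha\cong\At\mathfrak{Nr}_n\D$. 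Hence $\At\A\in\At\Nr_n\CA_{\omega}$ and $\Cm\At\A\in\Nr_n\CA_{\omega}$, which is exactly what it means for $\bold H_{\omega}$ to densely grip $\Nr_n\CA_{\omega}$. So this direction requires only that I quote Theorem \ref{gripneat} and read off the two conclusions from its statement.

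The second half --- that $\bold H_{\omega}$ does \emph{not} grip $\Nr_n\CA_{\omega}$ --- is where the real work lies. To grip $\Nr_n\CA_{\omega}$ would mean: whenever $\A\in\CA_n$ is atomic with countably many atoms and \pe\ has a \ws\ in $\bold H_{\omega}(\A)$, then $\A$ itself (not merely $\Cm\At\A$) lies in $\Nr_n\CA_{\omega}$. The plan is to exhibit a counterexample: a countable atomic $\A\in\CA_n$ for which \pe\ wins $\bold H_{\omega}(\A)$ yet $\A\notin\Nr_n\CA_{\omega}$. The natural candidate is the algebra $\B$ produced in part~1 of the proof of Theorem \ref{iii} (equivalently the term algebra $\Tm\alpha$ appearing in the proof of Theorem \ref{truncate}(2)): there we have a countable atomic $\B\equiv\C_{\Z,\N}$ with \pe\ having a \ws\ in $\bold H_{\omega}(\At\B)$, $\Cm\At\B\in\Nr_n\CA_{\omega}$, $\B\subseteq_d\Cm\At\B$, and $\B\in\CRCA_n$ --- but the proof there explicitly flags that ``we do not guarantee that $\B$ itself is in $\Nr_n\CA_{\omega}$.'' Alternatively one can use the algebra $\E=\Sg^{\A}(\{y\}\cup X)$ from item~(1) of Theorem \ref{square}, for which $\Cm\At\E=\A\in\Nr_n\CA_{\omega}$ while $\E\notin\mathbf{El}\,\Nr_n\CA_{n+1}$, hence a fortiori $\E\notin\Nr_n\CA_{\omega}$; and $\E\in\CRCA_n$, so \pe\ wins the atomic game, and one checks she also wins the hypergame $\bold H_{\omega}(\At\E)$ (the hyperlabel moves being handled exactly as in the proof of Theorem \ref{iii}, since $\At\E=\At\A$ and $\A$ is a full set algebra).

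So the concrete plan is: take $\A := \E$ from Theorem \ref{square}(1). First, note $\A$ is countable and atomic with $\Cm\At\A\in\Nr_n\CA_{\omega}$ and $\A\in\CRCA_n$. Second, since $\A$ is completely representable and countable, \pe\ has a \ws\ in $G_{\omega}(\At\A)$ by Proposition \ref{rep}; upgrade this to a \ws\ in $\bold H_{\omega}(\At\A)$ by adding the bookkeeping for hyperlabels and for the transformation and amalgamation moves exactly as in the hyperedge-labelling argument inside the proof of Theorem \ref{iii} (short hyperedges constantly labelled, new long hyperedges given fresh labels, claim reducing amalgamation to cylindrifier moves) --- this works because $\A$ being a genuine (full) set algebra makes the graph/network part of \pe's strategy trivial. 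Third, invoke \cite{SL} (cited in Theorem \ref{square}) to conclude $\A=\E\notin\mathbf{El}\,\Nr_n\CA_{n+1}$, hence $\A\notin\Nr_n\CA_{n+1}\supseteq\Nr_n\CA_{\omega}$. This gives a countable atomic algebra on which \pe\ wins $\bold H_{\omega}$ but which is not a neat reduct, so $\bold H_{\omega}$ does not grip $\Nr_n\CA_{\omega}$, as required. The main obstacle is the second step: verifying carefully that \pe's \ws\ in the plain atomic game genuinely lifts to the hypergame with its extra transformation and amalgamation moves. I expect this to go through by the same envelope-and-claim machinery already developed in the proof of Theorem \ref{iii}, specialized to the easy situation where the underlying atomic network structure is that of an honest set algebra, so I would present it by reference to that argument rather than redoing it in full.
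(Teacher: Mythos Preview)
Your proposal is essentially correct and lands on the same counterexample as the paper: both use the algebra $\E$ from item~(1) of Theorem~\ref{square}, noting that $\At\E\in\At\Nr_n\CA_{\omega}$, $\Cm\At\E\in\Nr_n\CA_{\omega}$, and $\E\notin\Nr_n\CA_{n+1}$. The first half (dense gripping) is also handled identically, by direct appeal to Theorem~\ref{gripneat}.

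The one substantive difference is in how \pe's \ws\ in $\bold H_{\omega}(\At\E)$ is established. You propose to lift her \ws\ in $G_{\omega}$ via the envelope--and--claim machinery of Theorem~\ref{iii}, relying on the set--algebra structure to make the network part ``trivial.'' The paper instead works algebraically: \pe\ maintains the invariant $(N^a)^+\neq 0$ (in the sense of Lemma~\ref{n}), which handles cylindrifier and transformation moves because $\E\in\bold S_c\Nr_n\CA_{\omega}$; and for the amalgamation move she exploits the stronger fact $\At\E=\At\mathfrak{Nr}_n\D$ for some $\D\in\CA_{\omega}$, via the key property~$(*)$ that ${\sf s}_{\bar i}a\cdot y\neq 0\implies {\sf s}_{\bar i}a\leq y$ for atoms $a$ and $y\in\Nr_J\D$. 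This yields ${\sf c}_{(u)}M^+={\sf c}_{(v)}N^+$ directly, whence $M^+\cdot N^+\neq 0$ and an amalgam $L$ exists. Your route would also work, but the envelope bookkeeping by itself only coordinates hyperlabels and reduces amalgamation to an edge--labelling problem; you still owe the reader an explanation of how \pe\ labels the new edges consistently, which in your sketch is absorbed into ``the set--algebra structure makes it trivial.'' That is true (nodes correspond to elements of $\Q$ and amalgamation is literal union of assignments), but it is worth saying explicitly rather than delegating to the rainbow argument, whose edge--labelling step was rainbow--specific. The paper's $N^+$\,/\,$(*)$ argument is a cleaner way to package exactly this point.
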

\begin{proof} The first part follows from Theorem \ref{gripneat}. For the second more tricky part. Take  $\E\in {\sf Cs}_n$ to be the algebra in the first item of Theorem \ref{square}. 
We know that $\At\E\in \At\Nr_n\CA_{\omega}$ and $\Cm\At\E\in \Nr_n\CA_{\omega}$ 
but $\E\notin \Nr_n\CA_{n+1}(\supseteq \Nr_n\CA_{\omega}$). 
We show that \pe\ has a \ws\  in $\bold H_{\omega}(\At\E)$. 
 \pe's strategy dealing with $\lambda$--neat hypernetworks, where $\lambda$ is a constant label kept on short hyperedges is exactly like in the proof of Theorem \ref{gripneat}.
The rest of her \ws\
is to play $\lambda$--neat hypernetworks $(N^a, N^h)$ with $\nodes(N_a)\subseteq \omega$ such that
$(N^a)^+\neq 0$ (recall that $(N^a)^+$ is as defined in the proof of lemma \ref{n}).
In the initial round, let \pa\ play $a\in \At$.
\pe\ plays a network $N$ with $N^a(0, 1, \ldots n-1)=a$. Then $(N^a)^+=a\neq 0$.
The response to the cylindrifier move is exactly like in the first part of Lemma \ref{n} because $\E$ is completely representable
so $\E\in \bold S_c{\sf Nr}_n\CA_{\omega}$ \cite[Theorem 5.3.6]{Sayedneat}.
For transformation moves: if \pa\ plays
$(M, \theta),$ then it is easy to see that we have 
${(M^a\theta)}^+\neq 0$,
so this response is maintained in the next round.
For each $J\subseteq \omega$, $|J|=n$ say, for $\A\in \CA_{\omega}$, let $\Nr_J\A=\{x\in \A: {\sf c}_{l}x=x, \forall l\in \omega\sim  J\}.$ 
We have $\At\E\in \At{\sf Nr}_n\CA_{\omega}$, so assume that $\At\E=\At\mathfrak{Nr}_n\D$ with $\D\in \CA_{\omega}$. 
Then for all $y\in \Nr_J\D$,  where $J=\{i_0, i_1, \ldots, i_{n-1}\}$, $i_0, \ldots, i_{n-1}\in \omega$,  
the following holds for $a\in \At\E$ (*):  
${\sf s}_{i_0i_1\ldots i_{n-1}}a\cdot y\neq 0\implies  {\sf s}_{i_0 i_1\ldots i_{n-1}}a \leq y$.
Now we are ready to describle \pe's strategy in response to 
amalgamation moves. For better readability, we write $\bar{i}$ for $\{i_0, i_1, \ldots, i_{n-1}\}$, 
if it occurs as a set, and we write ${\sf s}_{\bar{i}}$ short for
${\sf s}_{i_0}{\sf s}_{i_1}\ldots {\sf s}_{i_{n-1}}$. 
Also we only deal with the network part of the game.
Now suppose that  \pa\ plays the amalgamation move $(M,N)$ where $\nodes(M)\cap \nodes(N)=\{\bar{i}\}$,
then $M(\bar{i})=N(\bar{i})$.  Let $\mu=\nodes(M)\sim \bar{i}$ and $v=\nodes(N)\sim \bar{i}.$
Then ${\sf c}_{(v)}M^+=M^+$ and
${\sf c}_{(u)}{N^+}={M}^+$.
Hence using (*), we have;
${\sf c}_{(u)}{M}^+={\sf s}_{\bar{i}}M(\bar{i})={\sf s}_{\bar{i}}N(\bar{i})={\sf c}_{(v)}{N}^+$
so ${\sf c}_{(v)}{M}^+={M}^+\leq {\sf c}_{(u)}{M}^+={\sf c}_{(v)}N^+$
and ${M}^+\cdot {N}^+=x\neq 0.$  So there is $L$ with $\nodes(L)=\nodes(M)\cup \nodes(N)$,
and ${L}^+\cdot x\neq 0$, 
thus ${L}^+\cdot {M}^+\neq 0$
and consequently ${L}\restr {\nodes(M)}={M}\restr {\nodes(M)}$,
hence $M\subseteq L$ and similarly $N\subseteq L$, so that $L$ is the required amalgam.\\
\end{proof}

\begin{example} Fix $2<n<\omega$. The game $\bold H$  
weakly and densely grips $\Nr_n\CA_{\omega}$ but $\bold H$ does not grip $\Nr_n\CA_{\omega}$. We devise an $\omega$--rounded {\it non-atomic} game 
$\bold G$ {\it gripping} $\Nr_n\CA_{\omega}$. By non-atomic, 
we mean that arbitray elements of the algebra not necessarily atoms are allowed during the play.
By the example in item 1 of Theorem \ref{square} ,  $\bold G$ is {\it strictly} stronger than $\bold H$. That is to say, 
\pe\ has a \ws\ in $\bold G\implies$ \pe\ has a \ws\ in $\bold H$, but the converse implication is false as, using the notation in {\it op.cit},  \pe\ 
has a \ws\ in $\bold H(\At\E)$ but does not have a \ws\ in $\bold G(\E)$.  

The game $\bold G$ is played on both $\lambda$--neat hypernetworks as defined for $\bold H$,
and complete labelled graphs (possibly by non--atoms)
with no consistency conditions.  The play at a certain point, like in $\bold H$,   will be a
$\lambda$--neat hypernetwork, call its
network part $X$, and we write
$X(\bar{x})$ for the atom the edge $\bar{x}$. By network part we mean forgetting $k$--hypedges getting 
non--atomic labels.
{\it An $n$-- matrix} is a finite complete graph with nodes including $0, \ldots, n-1$
with all edges labelled by {\it arbitrary elements} of $\B$. No consistency properties are assumed.
\pa\ can play an arbitrary $n$--matrix  $N$, \pe\ must replace $N(0, \ldots, n-1),$  by
some element $a\in \B$; this is a non-atomic move.
The final move is that \pa\ can pick a previously played $n$--matrix $N$, and pick any  tuple $\bar{x}=(x_0,\ldots, x_{n-1})$
whose atomic label is below $N(0, \ldots, n-1)$.
\pe\ must respond by extending  $X$ to $X'$ such that there is an embedding $\theta$ of $N$ into $X'$
 such that $\theta(0)=x_0\ldots , \theta(n-1)=x_{n-1}$ and for all $i_0, \ldots i_{n-1} \in N,$ we have
$$X(\theta(i_0)\ldots, \theta(i_{n-1}))\leq N(i_0,\ldots, i_{n-1}).$$
This ensures that in the limit, the constraints in
$N$ really define the element $a$.
Assume that $\B\in \CA_n$ is atomic and has countably many atoms. If \pe\ has a \ws\ in $\bold G(\B)$, 
then the extra move involving non--atoms labelling matrices, ensures that  that every $n$--dimensional element generated by
$\B$ in a  dilation $\D\in \RCA_\omega$ having base $M$, constructed from a \ws\ in $\bold G$ as the 
limit of the  $\lambda$--neat hypernetworks played during the game 
(and further assuming without loss that \pa\ plays every possible move)  
is already an element of $\B$.   For $k<\omega$, let $\bold G_k$ be the game $\bold G$ truncated to $k$ rounds, and let $\bold G^{ra}$ and $\bold G_k^{ra}$
be the relation algebra analogue of the game  obtained 
by adding the non--atomic move replacing $n$-matrices by $2-matrices$, to the game $H$ as defined for relation algebras \cite[Definition 28]{r}. 
\end{example}
Using the argument in the proof of the Theorem \ref{gripneat} replacing $\bold H$ by $\bold G$ we get: 
Assume that $2<n<m<\omega$. If there exists a countable atom structure $\alpha$ such that \pe\ has a \ws\ in $\bold G_k(\Cm\alpha)$
for all $k\in \omega$ and \pa\ has a \ws\ in $F^m$, then any class $\bold K$, such that ${\sf Nr}_n\CA_{\omega}\subseteq \bold K\subseteq \bold S_c{\sf Nr}_n\CA_m$,
is not elementary. We have already  proved the last result. The relation algebra case is more interesting. Undefined notation can be found in \cite{r}; detailed 
citation is given in the proof.
\begin{theorem} Assume that $2<m<\omega$. If there exists a countable atom structure $\alpha$ such that \pe\ has a \ws\ in $\bold G^{ra}_k(\Cm\alpha)$
for all $k\in \omega$ and \pa\ has a \ws\ in $F^m$, then any class $\bold K$, such that $\Ra\CA_{\omega}\subseteq \bold K\subseteq \bold S_c{\sf Ra}\CA_5$,
$\bold K$ is not elementary. 
\end{theorem}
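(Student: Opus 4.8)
The plan is to exhibit two elementarily equivalent relation algebras, one lying in $\Ra\CA_{\omega}$ (hence in every class $\bold K$ as in the statement) and one lying outside $\bold S_c\Ra\CA_m$ (hence outside every such $\bold K$); since an elementary class cannot contain a member without containing everything elementarily equivalent to it, $\bold K$ is not elementary. Throughout, $m$ denotes the index appearing in the hypothesis $F^m$, so the displayed statement is the case $m=5$.

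First I would fix the given countable atom structure $\alpha$, so that \pe\ has a \ws\ in $\bold G^{ra}_k(\Cm\alpha)$ for every finite $k$ and \pa\ has a \ws\ in $F^m$. For finite $k$ the assertion ``\pe\ has a \ws\ in $\bold G^{ra}_k$'' is a first order property of a relation algebra: in $k$ rounds only boundedly many nodes, hyperedges, and (the extra matrix move of $\bold G^{ra}$ notwithstanding, since each such move uses only finitely many elements) boundedly many parameters occur, and the winning condition is quantifier--bounded. Hence this property is preserved under ultrapowers and under passing to elementary extensions. Taking a suitably saturated ultrapower $\prod_{i/U}\Cm\alpha$ over a non--principal $U$ on $\omega$ and then running the elementary chain argument of \cite[Theorem 3.3.5]{HHbook2} --- exactly as in the proof of Corollary \ref{last} --- I obtain a \emph{countable atomic} relation algebra $\mathfrak{B}$ with $\mathfrak{B}\equiv\Cm\alpha$ in which \pe\ has a \ws\ in the full $\omega$--rounded game $\bold G^{ra}_{\omega}(\mathfrak{B})$; set $\beta=\At\mathfrak{B}$.

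Next I would carry out the $\bold G^{ra}$--analogue of Theorem \ref{gripneat}. From \pe's \ws\ in $\bold G^{ra}_{\omega}(\mathfrak{B})$ one builds a nested sequence $M_0\subseteq M_1\subseteq\cdots$ of $\lambda$--neat networks whose limit $M$ yields, for each $a\in\beta$, a complete weak set algebra $\mathfrak{D}_a$ with universe $\{\phi^{M}:\phi\in L_{\infty,\omega}\}$, and then $\mathfrak{D}=\bold P_{a\in\beta}\mathfrak{D}_a\in\RCA_{\omega}$ is complete. As in Theorem \ref{gripneat} this gives $\mathfrak{B}\subseteq_d\Ra\mathfrak{D}$ with $\Ra\mathfrak{D}$ complete. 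The extra \emph{non--atomic matrix move} of $\bold G^{ra}$, absent from the relation--algebra game $H$ of \cite[Definition 28]{r}, is precisely what upgrades this density to equality: if $z$ is a two--dimensional element of $\mathfrak{D}$ generated by $\mathfrak{B}$, then the constraints of the $2$--matrix over $\mathfrak{B}$ that \pa\ was permitted to play for $z$, and that \pe\ was forced to honour, pin $z$ to an element of $\mathfrak{B}$. Hence $\Ra\mathfrak{D}=\mathfrak{B}$, and so $\mathfrak{B}\in\Ra\CA_{\omega}\subseteq\bold K$.

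For the other side, \pa's \ws\ in $F^m$ gives, by the relation--algebra form of \cite[Lemma 2.6]{r} (applied verbatim as in the proof of Corollary \ref{last}), that $\Cm\alpha\notin\bold S_c\Ra\CA_m$, hence $\Cm\alpha\notin\bold K$. Since $\mathfrak{B}\in\bold K$, $\Cm\alpha\notin\bold K$ and $\mathfrak{B}\equiv\Cm\alpha$, the class $\bold K$ is not closed under elementary equivalence; in particular $m=5$ yields the displayed statement. I expect the only real obstacle to be the $\bold G^{ra}$--analogue of Theorem \ref{gripneat}, and within it the verification that the non--atomic move forces $\Ra\mathfrak{D}=\mathfrak{B}$ rather than a proper dense extension --- this requires a careful analysis, in the limit model $M$, of how an $L_{\infty,\omega}$--definable, cylindrically invariant (so two--dimensional) element of $\mathfrak{D}$ is compelled to coincide with the element \pe\ committed to when \pa\ played the matrix defining it. A secondary, routine but delicate, point is checking that the ultrapower and elementary--chain step really produces a \emph{countable atomic} algebra on which \pe\ retains a \ws\ in the unbounded game.
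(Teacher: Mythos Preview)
Your proposal is correct and follows essentially the same route as the paper: take an ultrapower of $\Cm\alpha$ followed by an elementary chain argument to obtain a countable $\mathfrak{B}\equiv\Cm\alpha$ in which \pe\ wins the full $\omega$--rounded game $\bold G^{ra}$, invoke that $\bold G^{ra}$ \emph{grips} $\Ra\CA_{\omega}$ (your explanation of how the non--atomic matrix move upgrades density to equality is exactly the content the paper alludes to when it says ``$\bold G^{ra}$ grips $\Ra\CA_{\omega}$''), and use \pa's \ws\ in $F^m$ together with the $\RA$ analogue of Lemma~\ref{n} to place $\Cm\alpha$ outside $\bold S_c\Ra\CA_m$. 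The only cosmetic difference is that the paper cites \cite[Theorem 33]{r} rather than \cite[Lemma 2.6]{r} for the $F^m$ step, and compresses your second paragraph into the single clause ``because $\bold G^{ra}$ grips $\Ra\CA_{\omega}$''.
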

\begin{proof} The analogous result can be obtained 
for relation algebras for $2<m<\omega$ obtained by replacing ${\sf Nr}_n$ by $\Ra$.
One uses the arguments in \cite[Theorem 39, 45]{r}, but  resorting to the game $\bold G^{ra}_k$ 
in place of $H_k$ $(k<\omega)$, as defined for relation algebras \cite[Definition 28]{r}. 
Now by assumption we have a countable relation algebra atom structure $\alpha$, 
for  which \pe\ has a \ws\ in $\bold G_k^{ra}(\Cm\alpha)$, for all $k<\omega$, and \pa\ has a \ws\ in $F^m(\alpha)$ with $F^m$ as defined  in \cite[Definition 28]{r}.
By the $\RA$ analogue of lemma \ref{n} proved in \cite[Theorem 33]{r}, we get that $\Cm\alpha\notin \bold S_c\Ra\CA_m$. 
The usual argument of taking an ultrapower of $\Cm\alpha$, followed by a downward  elementary chain argument, one gets 
a countable $\B\in \sf RA$, such that $\B\equiv \Cm\alpha$ and  
\pe\ has a \ws\ in $\bold G^{ra}(\B)$, 
so $\B\in \Ra\CA_{\omega}$ because $\bold G^{ra}$ grips $\sf Ra\CA_{\omega}$. Hence for any $\bold K$, such that $\sf Ra\CA_{\omega}\subseteq \bold K\subseteq \bold S_c\sf Ra\CA_5$, we have
$\Cm\alpha\notin \bold K$, $\B\in \bold K$ and $\Cm\alpha\equiv \B$.
\end{proof}

Fix $2<n<\omega$. Now we investigate the analogue of the result proved in Theorem \ref{rainbow} 
for several cylindric--like algebras, like Pinter's substitution algebras ($\Sc$) and 
quasi-polyadic (equality) algebras, denoted by $\sf QA$, ($\QEA$).  
We start with a lemma that generalizes \cite[Theorem 5.1.4]{Sayedneat} to  any class of algebras between $\Sc$ and $\QEA$.
Such a result was proved in several publictions of the author's dealing separately with different cases; relevant references can be found among the references 
in the article \cite{Sayedneat} collected at the end of \cite{1}. These references include (the non exhaustive list) \cite{Fm, note, MLQ, IGPL}.
Following \cite{HMT2} in the next proof ${\sf Pes}_n$ denotes the class of polyadic equality set algebras of dimension $n$. 
For a Boolean algebra with operators $\A\in {\sf BAO}$, $\Bl\A$ denotes its Boolean reduct.

\begin{lemma}\label{d}       
For any ordinal $\alpha>1$, and any uncountable cardinal $\kappa\geq |\alpha|$, there exist completely representable algebras $\A, \B\in \QEA_{\alpha}$, 
that are weak set algebras, such that $|\A|=|\B|=\kappa$, $\A\in {\sf Nr}_{\alpha}\QEA_{\alpha+\omega}$,  $\Rd_{sc}\B\notin {\sf Nr}_{\alpha}\Sc_{\alpha+1}$, 
$\A\equiv_{\infty, \omega}\B$ and $\At\A\equiv_{\omega, \infty}\At\B$.
\end{lemma}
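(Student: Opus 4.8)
The plan is to build $\A$ and $\B$ as weak set algebras over a carefully chosen relational structure $\Mo$ with built-in homogeneity, generalizing the $n=3$ construction of \cite[Theorem 5.1.4]{Sayedneat} (and its more combinatorial presentation in \cite{IGPL}) from cylindric to polyadic equality algebras of arbitrary dimension $\alpha>1$. First I would fix the signature: for the given uncountable $\kappa\geq|\alpha|$, take a first order signature with $\kappa$ many $\alpha$-ary relation symbols, interpreted \emph{disjointly} in a countably saturated (or simply homogeneous with quantifier elimination) model $\Mo$, together with whatever extra structure makes the base $\Mo$ admit the quantifier-elimination argument. The algebra $\A$ is then the subalgebra of the full weak set algebra $\wp^{(\alpha)}(^{\alpha}\Mo)^{(\bar{p})}$ (weak space based at some fixed point $\bar p$) generated by the sets $\chi_u^{\Mo}$ for $u\in{}^{\alpha}\alpha$ together with the $\kappa$ many basic relations; $\B$ is an ``unfolded'' or reindexed variant built so that $\A\equiv_{\infty,\omega}\B$ but the extra polyadic/substitution structure that forces a neat embedding into $\Sc_{\alpha+1}$ is destroyed. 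One then sets $\A_u=\Rl_{\mathbf 1_u}\A$, $\B_u=\Rl_{\mathbf 1_u}\B$ exactly as in the excerpt's Theorem \ref{iii}, arranging that $\A_u=\B_u$ for all $u$ except $u=\mathrm{Id}$, where $\B_{\mathrm{Id}}$ is forced countable while $\A_{\mathrm{Id}}$ has cardinality $\kappa$.

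The key steps, in order, would be: (1) verify that $\A,\B$ are genuine weak set algebras of dimension $\alpha$, that they are atomic with atoms the singletons/MCA-type sets, and that they are completely representable (the representation is the identity inclusion into the full weak set algebra, which is a complete representation because joins of singletons are unions); (2) prove $\A\equiv_{\infty,\omega}\B$ by exhibiting a winning strategy for \pe\ in the Ehrenfeucht--Fra\"iss\'e game over $(\A,\B)$ — here I would reuse verbatim the strategy sketched in the excerpt's proof of Theorem \ref{iii} part 2: \pe\ maintains that for every atom $x$ with $x\cdot\mathbf 1_{\mathrm{Id}}=0$, $x\in a_i\iff x\in b_i$, and that the pebbled tuples induce matching finite partitions of $\mathbf 1_{\mathrm{Id}}$ into $2^{|\bar a|}$ parts with equal (finite) cardinalities; (3) show $\A\in\Nr_{\alpha}\QEA_{\alpha+\omega}$ using quantifier elimination in $\Mo$ — the $\alpha+\omega$-dilation is the analogous weak set algebra in $\alpha+\omega$ variables, and q.e. guarantees the $\alpha$-neat reduct is exactly $\A$; (4) show $\Rd_{sc}\B\notin\Nr_{\alpha}\Sc_{\alpha+1}$ by the cardinality argument: if $\B=\Nr_{\alpha}\C$ with $\C\in\Sc_{\alpha+1}$, apply the substitution term $_{\alpha}\mathsf{s}(0,1)$ realized in $\C$ using the one spare dimension; it is a Boolean automorphism of $\B$ sending $\B_u$ ($u=(1,0,2,\ldots)$) into $\B_{\mathrm{Id}}$, forcing $|\B_u|=|\B_{\mathrm{Id}}|=\omega$, contradicting $|\B_u|=|\A_u|=\kappa$; (5) derive $\At\A\equiv_{\infty,\omega}\At\B$ — since atoms are first-order-definable from the relational structure and \pe's strategy respects the atom structure, the same pebble-matching strategy lifted to the atom structures (finite relational structures) works, or one invokes that $\A\equiv_{\infty,\omega}\B$ between atomic algebras whose atom structures are interpretable in them yields $\At\A\equiv_{\infty,\omega}\At\B$.

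The main obstacle I expect is step (3)–(4) done simultaneously and coherently: one must design the \emph{single} base structure $\Mo$ (and its reduct/variant for $\B$) so that \emph{both} $\A$ genuinely neatly embeds into the $\omega$-extra-dimensional polyadic algebra (which needs full quantifier elimination, hence a rather rigid choice of $\Mo$ — e.g. a disjoint union of ``indicator'' relations over a sufficiently generic/homogeneous base) \emph{and} the twisted copy $\B$ fails to neatly embed even into the $\alpha+1$-dimensional substitution-algebra reduct. The delicate point is that $\Sc$ has the substitution operators but not the cylindrifications-plus-diagonals interaction in the same form, so the term $_{\alpha}\mathsf{s}(0,1)$ used in the cylindric case must be re-derived purely from $\Sc_{\alpha+1}$ operations and shown still to be an automorphism of $\B$ with the required bad behaviour on $\B_u$; checking that this goes through for \emph{every} class between $\Sc$ and $\QEA$ (so the reduct statement is uniform) is the part requiring the most care. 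A secondary, more bookkeeping obstacle is handling arbitrary ordinal $\alpha>1$ rather than finite $n$: weak set algebras rather than genuine set algebras are needed so that the ${}^{\alpha}\Mo$-based constructions have enough room, and one must be careful that ``$u\in{}^{\alpha}\alpha$'', the transposition $[0,1]$, and the partition-of-$\mathbf 1_{\mathrm{Id}}$ argument all still make sense and that the EF strategy survives the passage to infinite dimension; I would remark that the proof of \cite[Theorem 5.1.4]{Sayedneat} and \cite{IGPL} lifts essentially verbatim once the signature is set up with disjoint (rather than merely distinct) interpretations, exactly as the excerpt already notes for the cylindric case.
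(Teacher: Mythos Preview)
Your outline for steps (1), (2), (3) and (5) is essentially correct and tracks the paper closely: disjoint interpretations to get atomicity, quantifier elimination in $\Mo$ for the neat-embedding of $\A$, and the same pebble-matching \ef\ strategy for $\equiv_{\infty,\omega}$. Where you diverge from the paper is step (4), and the obstacle you yourself flag there is exactly the point.

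You propose to reuse the $\CA$-specific argument from Theorem~\ref{iii}: make $\B_{\mathrm{Id}}$ countable and invoke the transposition term $_{\alpha}{\sf s}(0,1)$ in the hypothetical $\Sc_{\alpha+1}$ dilation as a Boolean automorphism carrying an uncountable $\B_u$ into $\B_{\mathrm{Id}}$. You note, correctly, that it is not clear this term behaves as an automorphism in a bare $\Sc_{\alpha+1}$ (no diagonals), and you leave it unresolved. The paper does not try to resolve it either; it takes a different route. The distinguished index is not $\mathrm{Id}$ but the constant-zero map $\bold 0\in{}^nn$, obtained as $\tau(u_1,u_2)$ for $u_1=(0,1,0,\ldots,0)$, $u_2=(1,0,0,\ldots,0)$ and an \emph{approximate witness} $\tau(x,y)={\sf c}_1({\sf c}_0x\cdot{\sf s}_1^0{\sf c}_1y)\cdot{\sf c}_1x\cdot{\sf c}_0y$ already definable in dimension $n$. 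The extra-dimensional term is the \emph{$n$-witness} $\tau_n(x,y)={\sf c}_n({\sf s}_n^1{\sf c}_nx\cdot{\sf s}_n^0{\sf c}_ny)$, built only from ${\sf c}_n$ and replacements ${\sf s}_n^i$, hence available in any $\Sc_{n+1}$; one verifies $\Sc_{n+1}\models\tau_n(x,y)\leq\tau(x,y)$. The signature of $\Mo$ is also richer than you indicate: there are unary predicates $P_0,\ldots,P_{n-1}$ giving the $\chi_u$, the $n$-ary relations are indexed by an abelian group $(I,+)$ with $R_i\circ R_j=R_{i+j}$, and they are required to be \emph{symmetric} (so that $\A$ is genuinely in $\QEA_n$, not just $\CA_n$). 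The contradiction is then $\tau_n^{\D}(R_i^{\Mo}\cdot\chi_{u_1},R_j^{\Mo}\cdot\chi_{u_2})=R_{i+j}^{\Mo}\cdot\chi_{\bold 0}$, forcing uncountably many elements into $\B_{\bold 0}$; no automorphism claim is needed anywhere.

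So the gap is concrete: your step (4) is the cylindric argument, not the substitution-algebra one, and the fix is not to push the transposition through $\Sc$ but to change both the witness term (to $\tau_n$) and the shrunk component (to $\bold 0$), and to build the group structure $R_i\circ R_j=R_{i+j}$ and the symmetry of the relations into $\Mo$ from the start.
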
  
\begin{proof} Here we consider only finite dimensions which is all we need.
Also we count in dimension $2$. Fix $1<n<\omega$. Let $L$ be a signature consisting of the unary relation
symbols $P_0,P_1,\ldots, P_{n-1}$ and
uncountably many $n$--ary predicate symbols. $\Mo$ is as in \cite[Lemma 5.1.3]{Sayedneat}, but the tenary relations are replaced by 
$n$--ary ones, and we require that the interpretations of the $n$--ary relations in $\Mo$ 
are {\it pairwise disjoint} not only distinct. This can be fixed. 
In addition to pairwise
disjointness of $n$--ary relations, we require their symmetry, 
that is, permuting the variables does not change
their semantics. In fact the construction is presented this way in \cite{Fm, note}.
For $u\in {}^n n$, let $\chi_u$
be the formula $\bigwedge_{u\in {}^n n}  P_{u_i}(x_i)$. We assume that the $n$--ary relation symbols are indexed by (an uncountable set) $I$
and that  there is a binary operation $+$ on $I$, such that $(I, +)$ is an abelian group, and for distinct $i\neq j\in I$,
we have $R_i\circ R_j=R_{i+j}$. 
For $n\leq k\leq \omega$, let $\A_k=\{\phi^{\Mo}: \phi\in L_k\}(\subseteq \wp(^k\Mo))$,
where $\phi$ is taken in the signature $L$, and $\phi^{\Mo}=\{s\in {}^k\Mo: \Mo\models \phi[s]\}$. 

Let $\A=\A_n$, then $\A\in \sf Pes_n$ by the added symmetry condition.
Also $\A\cong \Nr_n\A_{\omega}$; the isomorphism is given by
$\phi^{\Mo}\mapsto \phi^{\Mo}$. The map is obviously an injective homomorphism; it is surjective, because $\Mo$ 
(as stipulated in \cite[ item (1) of lemma 5.1.3]{Sayedneat}), 
has quantifier elimination.
For $u\in {}^nn$, let $\A_u=\{x\in \A: x\leq \chi_u^{\Mo}\}.$ Then
$\A_u$ is an uncountable and atomic Boolean algebra (atomicity follows from the new disjointness condition)
and $\A_u\cong {\sf Cof}(|I|)$, the finite--cofinite Boolean algebra on $|I|$.
Define a map $f: \Bl\A\to \bold P_{u\in {}^nn}\A_u$, by
$f(a)=\langle a\cdot \chi_u\rangle_{u\in{}^nn+1}.$
Let $\P$ denote the
structure for the signature of Boolean algebras expanded
by constant symbols $1_u$, $u\in {}^nn$, ${\sf d}_{ij}$, and unary relation symbols
${\sf s}_{[i,j]}$ for each $i,j\in n$.
Then for each $i<j<n$, there are quantifier free formulas
$\eta_i(x,y)$ and $\eta_{ij}(x,y)$ such that
$\P\models \eta_i(f(a), b)\iff b=f({\sf c}_i^{\A}a),$
and $\P\models \eta_{ij}(f(a), b)\iff b=f({\sf s}_{[i,j]}a).$
The one corresponding to cylindrifiers is exactly like the $\CA$ case \cite[pp.113-114]{Sayedneat}.
For substitutions corresponding to
transpositions, it is simply $y={\sf s}_{[i,j]}x.$  The diagonal elements and the Boolean operations are easy to interpret. 
Hence, $\P$ is interpretable in $\A$, and the interpretation is one dimensional and
quantifier free. For $v\in {}^nn$, by the Tarski--Sk\"olem downward theorem, 
let  $\B_v$ be a countable elementary subalgebra of $\A_v$. (Here we are using the countable signature of $\PEA_n$).
Let $S_n (\subseteq {}^nn)$ be the set of permuations in $^nn$.

Take $u_1=(0, 1, 0, \ldots, 0)$ and $u_2=(1, 0, 0, \ldots, 0)\in {}^nn$. 
Let  $v=\tau(u_1,u_2)$  where $\tau(x,y)={\sf c}_1({\sf c}_0x\cdot {\sf s}_1^0{\sf c}_1y)\cdot {\sf c}_1x\cdot {\sf c}_0y$. We call $\tau$ an approximate
witness. It is not hard to show that $\tau(u_1, u_2)$ is actually the composition of $u_1$ and $u_2$, 
so that $\tau(u_1, u_2)$ is the constant zero map; which we denote by $\bold 0$; it is also in $^nn$. 
Clearly for every $i<j<n$,  ${\sf s}_{[i,j]} {}^{^nn}\{\bold 0\}=\bold 0\notin \{u_1, u_2\}$.
We can assume without loss that 
the Boolean reduct of $\A$ is the following product:
$$\A_{u_1}\times \A_{u_2}\times  \A_{\bold 0}\times\bold P_{u\in V\sim J} \A_u,$$
where $J=\{u_1, u_2, \bold 0\}$.
Let $$\B=((\A_{u_1}\times \A_{u_2}\times \B_{\bold 0}\times\bold P_{u\in V\sim J} \A_u), 1_u, {\sf d}_{ij}, {\sf s}_{[i,j]}x)_{i,j<n},$$
recall that  $\B_{\bold 0}\prec \A_{\bold 0}$ and $|\B_{\bold 0}|=\omega$,
inheriting the same interpretation.  Then by the Feferman--Vaught theorem,
we get that
$\B\equiv \A$.
Now assume for contradiction, that $\Rd_{sc}\B=\Nr_n\D,$ with $\D\in \Sc_{n+1}$.
Let $\tau_n(x,y)$, which we call an {\it $n$--witness}, be defined by ${\sf c}_n({\sf s}_n^1{\sf c}_nx\cdot {\sf s}_n^0{\sf c}_ny).$
By a straightforward, but possibly tedious computation, one can obtain  $\Sc_{n+1}\models \tau_n(x,y)\leq \tau(x,y)$ 
so that the approximate witness {\it dominates} the $n$--witness.
The term $\tau(x,y)$ does not use any spare dimensions, and it `approximates' the term $\tau_n(x,y)$ that
uses the spare dimension $n$. 
Let $\lambda=|I|$. For brevity, we write $\bold 1_{u}$ for $\chi_u^{\Mo}$. 
The algebra $\A$ can be viewed as splitting the atoms of the atom structure ${\bf At}=(^nn, \equiv_, \equiv_{ij}, D_{ij})_{i,j<n}$ each to uncountably many atoms.
We denote $\A$ by 
${\sf split}({\bf At}, \bold 1_{\bold 0}, \lambda).$ 
On the other hand, $\B$ can be viewed as splitting the same atom structure, each  atom -- except for one atom that is split into countably many atoms -- 
is also split into uncountably many atoms (the same as in $\A)$. 
We denote $\B$ by $\sf split({\bf At}, \bold 1_{\bold 0}, \omega).$ 

On the `global' level, namely, in the complex algebra of the finite
(splitted) atom structure $^nn$, these two terms are equal, the approximate witness
is the $n$--witness. The complex algebra $\Cm{}(^{n}n)$ does not `see' the $n$th dimension.
But in the algebras $\A$ and $\B$ (obtained after splitting),  the $n$--witness becomes then a {\it genuine witness},  not an approximate one.  The approximate witness 
{\it strictly dominates} the $n$--witness.  The $n$--witness using the spare dimension $n$, detects the cardinality twist that $L_{\infty, \omega}$, {\it a priori}, 
first order logic misses out on.
If the $n$--witness were term definable (in the case we have a full neat reduct of an algebra in only one extra dimension), then
it takes two uncountable component to an uncountable one,
and this is not possible for $\B$, because in $\B$, the target 
component  is forced to be 
countable. 
Now for $x\in \B_{u_1}$ and  $y\in \B_{u_2}$, we have
$$\tau_n^{\D}(x, y)\leq \tau_n^{\D}(\chi_{u_1}, \chi_{u_2})\leq \tau^{\D}(\chi_{u_1}, \chi_{u_2})=\chi_{\tau^{\wp(^nn)}}(u_1,u_2)=\chi_{\tau(u_1, u_2)}=\chi_{\bold 0}.$$
But for $i\neq j\in I$,
$\tau_n^{\D}(R_i^{\sf M}\cdot \chi_{u_1}, R_j^{\sf M}\cdot \chi_{u_2})=R_{i+j}^{\sf M}\cdot \chi_v$, and so    $\B_{\bold 0}$ will be uncountable,
which is impossible.
We now show that \pe\ has a \ws\ in an \ef\ back--and--forth game over the now atomic
$(\A, \B).$  At any stage of the game, if \pa\ places a pebble on one of
$\A$ or $\B$, \pe\ must place a matching pebble on the other
algebra.  Let $\b a = \la{a_0, a_1, \ldots, a_{m-1}}$ be the position
of the pebbles played so far (by either player) on $\A$ and let $\b b = \la{b_0, \ldots, b_{m-1}}$ be the the position of the pebbles played
on $\B$.  
 Denote $\chi_u^{\Mo}$, by $\bold 1_u$. Then \pe\ has to
maintain the following properties throughout the
game:
\begin{itemize}

\item for any atom $x$ (of either algebra) with $x\cdot \bold 1_{\bold 0}=0$, , then $x \in a_i$ iff $x\in b_i$,

\item $\b a$ induces a finite partition of $\bold 1_{\bold 0}$ in $\A$ of $2^m$
 (possibly empty) parts $p_i:i<2^m$ and the $\b b$
induces a partition of  $1_{u}$ in $\B$ of parts $q_i:i<2^m$ such that 
$p_i$ is finite iff $q_i$ is
finite and, in this case, $|p_i|=|q_i|$.
\end{itemize}

It is easy to see that \pe\ can maintain these two properties in every round. In this back--and--forth game, \pe\ will always find a matching pebble, 
because the pebbles in play are finite.  For each $w\in {}^nn$ 
the component $\B_w=\{x\in \B: x\leq \bold 1_v\}(\subseteq \A_w=\{x\in \A: x\leq \bold 1_v\}$) 
contains infinitely many atoms. 
For any $w\in V$, $|\At\A_w|=|I|$, while 
for  $u\in V\sim \{\bold 0\}$, $\At\A_u=\At\B_u$. For 
$|\At\B_{\bold 0}|=\omega$, but it is still an infinite set.
Therefore $\A\equiv_{\infty}\B$.
It is clear that the above argument
works for any $\C$ such that $\At\C=\At\B$, hence $\B\equiv_{\infty, \omega}\C.$

\end{proof}

To obtain the required result (generalizing Theorem \ref{iii} to any class of algebras betwen $\sf Sc$ and $\QEA$ (to be formulated in a while as Theorem \ref{rain}) 
the following changes are needed:
wn that \pe\ has a \ws\ in $\bold H_{\omega}^q(\At\C)$.

\begin{itemize}

\item modifying the game $\bold H$ to the $\QEA$ case, call the new game $\bold H^q$. Here one has to modify only the `network part' 
of $\lambda$--neat hypernetworks by adding a symmetry condition ($({\sf s}_{[i, j]}N(\bar{x})=N(\bar{x}\circ [i, j])$),
\item working with $\C_{\Z, \N}$ now expanded with the unary operations ${\sf s}_{[i,j]}$ $(i<j<n)$, call the resulting rainbow algebra $\C_{\Z, \N}^q(\in {\sf RQEA}_n)$,

\item generalizing \ws's of \pa\ given in Theorem \ref{iii} in $\bold H_\omega(\At\C_{\Z, \N})$ to the game $\bold H^q_k (\At\Rd_{sc}\C_{\Z, \N}^q)$ and 
\pa's \ws\ in $\bold H^q_{k}(\At\C_{\Z, \N}^q)$ for each $k<\omega$; the last is given in the proof of the same theorem. 
Then one uses the (easy) modification of lemma \ref{gripneat} to the $\QEA_n$ case 
(in the modification, for example in defining the weak model ${\cal M}_a$ $(a\in \alpha)$ one adds a clause to formulas for satisfiability of the unary connectives 
${\sf s}_{[i, j]}$ $(i<j<n)$ interpreted as swapping the $i$th and $j$th variables).
Using lemma \ref{n}, one  shows that $\Rd_{Sc}\C_{\Z, \N}^q\notin \bold S_c\Nr_n\Sc_{n+3}^{\sf ad}$. Exactly the $\CA$ case,
we get that any class $\sf K$ between  $\Sc$ nd $\QEA$, and any class between the two classes 
$\bold S_d\Nr_n\K_{\omega}\cap  {\sf CRK}_n$ and $\bold S_c\Nr_n\K_{n+3}^{\sf ad}$, 
is not elementary, 

\item to remove the $\bold S_d$, from $\bold S_d\Nr_n\K_{\omega}$ like was done in the $\CA$ case, 
one adjoins the (splitting) construction in 
\cite{note} for classes between $\Sc$ and $\QEA$ in place of the modification of the construction in \cite[Theorem 5.1.4]{Sayedneat}
(as appeared  \cite{IGPL}) addressing only the $\CA$ case. The construction in \cite{SL}
shows that this last item  is necessary. In {\it opcit}, an atomic $\C\in {\sf RQEA}_n$
such that $\Cm\At\C\in \Nr_n\QA_{\omega}$ and   $\At\C\in \At\Nr_n\QEA_{\omega}$, but $\Rd_{sc}\A\notin \Nr_n\Sc_{n+1}
(\supseteq \Nr_n\Sc_{\omega}$), 
is constructed. In fact, this $\C$ is simple (has no proper ideals), 
so that it is a set algebra of dimension $n$ 
which is the ${\sf QEA}_n$ generated by the same generators of
$\E$ in the first item of Theorem \ref{square} but now  in the full {\it quasipolyadic equality set algebra with top element $^n\mathbb{Q}$}.
Like in Theorem \ref{grip}, it can be shown that \pe\ has a \ws\ in $\bold H_{\omega}^q(\At\C)$.
\end{itemize}

So for diagonal free reducts of $\QEA$, namely, $\Sc$ and $\sf QA$, we obtain the weaker result which we formulate only for $\Sc$s.
The Theorem is true for any class between $\Sc$ and $\sf QA$. Recall that  ${\sf CRSc}_n$ denotes the class of completely representable
$\Sc_n$s.
\begin{theorem}\label{rain} Any class between ${\sf CRSc}_n\cap \Nr_n\Sc_{\omega}$ and $\bold S_c\Nr_n\Sc_{n+3}^{\sf ad}$ 
is not elementary. 
\end{theorem}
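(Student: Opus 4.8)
The strategy is to transport the entire $\CA_n$ argument of Theorem~\ref{iii} to the diagonal-free setting $\Sc$, keeping track of the two places where the presence of diagonals was used: in Lemma~\ref{gripneat} (where the model ${\cal M}_a$ interprets diagonal predicates) and in the last-item construction of Theorem~\ref{iii} (which uses the modification of \cite[Theorem~5.1.4]{Sayedneat}). First I would fix $2<n<\omega$ and assemble the three ingredients in the order indicated in the bulleted list preceding the statement. The rainbow algebra is $\C^{q}_{\Z,\N}\in{\sf RQEA}_n$, built exactly as $\C_{\Z,\N}$ but with the substitution operations ${\sf s}_{[i,j]}$ ($i<j<n$) added, and its $\Sc$-reduct $\Rd_{sc}\C^{q}_{\Z,\N}$ is what we work with. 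The game $\bold H^{q}$ is $\bold H$ with the network-part consistency condition augmented by the symmetry clause ${\sf s}_{[i,j]}N(\bar x)=N(\bar x\circ[i,j])$. One then re-runs, verbatim with these cosmetic changes, the two halves of the proof of Theorem~\ref{iii}: (a) the combinatorial construction of \pe's \ws\ in $\bold H^{q}_{k}(\At\Rd_{sc}\C^{q}_{\Z,\N})$ for every finite $k$ (the rainbow $\rho_r:\Z\to\N$ bookkeeping, the cone-labelling, the amalgamation-via-envelopes claim), together with \pa's \ws\ in $\bold G^{n+3}(\At\C^{q}_{\Z,\N})$ obtained by the standard cone-bombardment forcing a decreasing sequence in $\N$; and (b) via the $\Sc$/$\QEA$ modification of Lemma~\ref{gripneat} (in which ${\cal M}_a$ gets an extra satisfaction clause for the unary connectives ${\sf s}_{[i,j]}$ interpreted as swapping variables), passing from \pe's \ws\ in $\bold H^{q}_{\omega}$ to a countable atomic $\B$ with $\B\subseteq_d\Cm\At\B\in\Nr_n\Sc_\omega$ and $\B\in{\sf CRSc}_n$, while $\C^{q}_{\Z,\N}\notin\bold S_c\Nr_n\Sc^{\sf ad}_{n+3}$ by Lemma~\ref{n} applied to the $\Sc$-reduct; since $\C^{q}_{\Z,\N}\equiv\B$, no class between $\bold S_d\Nr_n\Sc_\omega\cap{\sf CRSc}_n$ and $\bold S_c\Nr_n\Sc^{\sf ad}_{n+3}$ is elementary.

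The second stage removes the $\bold S_d$ on the lower side, exactly as in part~2 of the proof of Theorem~\ref{iii}. Here I would invoke Lemma~\ref{d} (with $\alpha=n$, $\kappa=2^{2^\omega}$), which already supplies, uniformly for any class between $\Sc$ and $\QEA$, atomic completely representable algebras $\A,\B\in\QEA_n$ with $\A\in\Nr_n\QEA_\omega$ (hence $\A\in\Nr_n\Sc_\omega\cap{\sf CRSc}_n$ after reduction), $\Rd_{sc}\B\notin\Nr_n\Sc_{n+1}$, $\A\equiv_{\infty,\omega}\B$, and $|\Rl_{\bold 1_{\bold 0}}\B|=|\At\Rl_{\bold 1_{\bold 0}}\B|=\omega$. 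Then the diagonal-free analogue of the $\kappa$-counting argument of Theorem~\ref{iii}(2) shows $\Rd_{sc}\B\notin\bold S_d\Nr_n\Sc_{n+1}\cap\bf At$: if $\B\subseteq_d\Nr_n\D$ with $\D\in\Sc_{n+1}$, then by Lemma~\ref{join} $\Rl_{\bold 1_{\bold 0}}\B\subseteq_d\Rl_{\bold 1_{\bold 0}}\Nr_n\D$, the latter has more than $\omega$ atoms because the $n$-witness term $\tau_n$ becomes a genuine (not merely approximate) witness in one spare dimension and forces the target component uncountable, and density pushes $\At\Rl_{\bold 1_{\bold 0}}\Nr_n\D\subseteq\At\Rl_{\bold 1_{\bold 0}}\B$, contradicting $|\At\Rl_{\bold 1_{\bold 0}}\B|=\omega$. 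Since ${\bf El}({\sf CRSc}_n\cap\Nr_n\Sc_\omega)\not\subseteq\bold S_d\Nr_n\Sc_\omega\cap{\sf CRSc}_n$, there is no elementary class between ${\sf CRSc}_n\cap\Nr_n\Sc_\omega$ and $\bold S_d\Nr_n\Sc_\omega\cap{\sf CRSc}_n$ either; splicing the two exclusion regions together yields the theorem for the whole interval up to $\bold S_c\Nr_n\Sc^{\sf ad}_{n+3}$, and the identical argument works for any class between $\Sc$ and $\QA$.

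\textbf{Main obstacle.} The genuinely delicate point is not the $\bold S_d$-removal (Lemma~\ref{d} does the heavy lifting there) but verifying that \pe's rainbow \ws\ in $\bold H^{q}_k$ survives the added symmetry condition on networks, and correspondingly that \pa's \ws\ in $\bold G^{n+3}$ is unaffected. Concretely one must check that when \pe\ labels a new edge $(\beta,\delta)$ by a red $\r_{\mu,b}$ chosen via $\rho_{r+1}$, or labels a cross-edge in an amalgamation move, the forced symmetric values ${\sf s}_{[i,j]}N(\bar x)=N(\bar x\circ[i,j])$ create no new forbidden triple — this is immediate because the rainbow atoms are all symmetric (non-identity atoms are self-converse in the intended sense), so the symmetry clause is automatically consistent with the colour-graph consistency conditions, but it does need to be said. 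A secondary technical chore is the modification of Lemma~\ref{gripneat}: one must confirm that adding the clause ${\cal M}_a,f\models{\sf s}_{[i,j]}\phi\iff{\cal M}_a,f\circ[i,j]\models\phi$ still yields a weak set algebra whose neat $n$-reduct has the right atom structure, which follows because short hyperedges remain constantly labelled by $\lambda$ and the partial-isomorphism extension argument is insensitive to the presence of the substitution connectives. Once these routine verifications are in place, everything else is a transcription of Theorem~\ref{iii}.
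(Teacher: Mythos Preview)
Your proposal is correct and follows essentially the same route as the paper: the paper's own ``proof'' of Theorem~\ref{rain} is precisely the bulleted list immediately preceding the statement, and you have faithfully expanded each bullet---the passage to $\bold H^{q}$ with the symmetry clause, the rainbow algebra $\C^{q}_{\Z,\N}$ and its $\Sc$-reduct, the lift of both players' strategies, the $\Sc$-version of Lemma~\ref{gripneat}, the application of Lemma~\ref{n} (whose proof uses only substitutions and cylindrifiers, hence the need for the completely additive superscript~$\sf ad$), and finally the removal of $\bold S_d$ via the splitting construction of Lemma~\ref{d}. Your identification of the two technical checkpoints (symmetry compatibility of \pe's red choices, and the extra ${\sf s}_{[i,j]}$ clause in the weak models ${\cal M}_a$) is exactly what the paper leaves to the reader.
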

Complete additivity appears on the left hand side (giving a smaller class than 
the class $\bold S_c\Nr_n\Sc_{n+3}$)  due to the intrusion of lemma \ref{n}. 
A discrepancy that deserves to be highlighted here is that in the case of non--atom canonicity, 
proved in 
Theorem \ref{grip}, though the sameLlemma \ref{n} was also used, additivity did not interfere at all for diagonal free reducts (like $\Sc$s) 
because algebras, more specifically   dilations  of algebras used, were finite.

\begin{corollary} Let $2<n<\omega$, $\sf K$ any class between $\Sc$ and $\QEA$, $m>n$, and $k\geq 3$. 
Then the following classes are not elementary:
${\sf CRK}_n$ \cite{HH, MT}, $\Nr_n\K_{m}$,  $\Nr_2\K_{m}$, \cite{IGPL, Fm, note, MLQ} 
$\bold S_d\Nr_n\K_{n+k}$ and  $\bold S_c\Nr_n\K_{n+k}$.
\end{corollary}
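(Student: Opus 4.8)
The plan is to read all five statements off the theorems already established, since each of the five classes is wedged between two classes for which the paper has manufactured an elementarily equivalent pair of algebras, one inside and one outside. First I would fix $\sf K$ with $\Sc\subseteq {\sf K}\subseteq \QEA$ and record the reduction that makes the whole argument uniform in $\sf K$: as indicated in the list of modifications preceding Theorem \ref{rain}, the games $\bold H^q$, $\bold G$ and their relational analogues, the (hyper)networks, \pe's and \pa's winning strategies, and the $\QEA$--version of the grip lemma (Lemma \ref{gripneat}) all survive the passage from $\CA_n$ to an arbitrary such $\sf K$ once one adds to networks the symmetry clause ${\sf s}_{[i,j]}N(\bar x)=N(\bar x\circ [i,j])$ and to the weak models ${\cal M}_a$ a satisfaction clause for the connectives ${\sf s}_{[i,j]}$. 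Granting this, it suffices to exhibit, for each $\bold K$ in the list, algebras with $\A\equiv\B$ (indeed $\A\equiv_{\infty,\omega}\B$ in most cases), $\B\in\bold K$ and $\A\notin\bold K$, since every class listed is closed under isomorphism.

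For ${\sf CRK}_n$ I would take the quasi\-polyadic expansion of the rainbow/Monk algebra of Theorem \ref{bsl} (equivalently of Corollary \ref{HH}): the Erd\H{o}s--Rado argument there shows it has no complete representation, while it satisfies the Lyndon conditions, hence lies in ${\bf El}\,{\sf CRK}_n\setminus {\sf CRK}_n$. For $\Nr_n\K_m$ and $\Nr_2\K_m$ with $m>n$ I would invoke Lemma \ref{d}: the algebras $\A,\B\in\QEA_n$ there satisfy $\A\in{\sf Nr}_n\QEA_{n+\omega}$, whence $\Rd_{\sf K}\A\in{\sf Nr}_n\K_m$ for every $m>n$, while $\Rd_{sc}\B\notin{\sf Nr}_n\Sc_{n+1}$ forces $\Rd_{\sf K}\B\notin{\sf Nr}_n\K_m$, and $\A\equiv_{\infty,\omega}\B$ so that the $\sf K$--reducts are elementarily equivalent; running the same construction with the $n$--ary relations of $\Mo$ replaced by binary ones gives the $\Nr_2$ variant, cf. \cite{IGPL,Fm,note,MLQ}.

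For the last two items, $\bold S_d\Nr_n\K_{n+k}$ and $\bold S_c\Nr_n\K_{n+k}$ with $k\geq 3$, I would reuse the witnesses built inside the proof of Theorem \ref{rain} (templated on the last item of Theorem \ref{iii}): a countable $\B\in\bold S_d\Nr_n\K_\omega\cap{\sf CRK}_n$, obtained from \pe's \ws\ in $\bold H^q_\omega$ via the grip lemma, and the rainbow algebra $\A=\Rd_{sc}\C_{\Z,\N}^q$ on which \pa\ has a \ws\ in $\bold G^{n+3}$. Since $\B\in\bold S_d\Nr_n\K_\omega\subseteq\bold S_d\Nr_n\K_{n+k}\subseteq\bold S_c\Nr_n\K_{n+k}$, the algebra $\B$ belongs to both classes; by the $\sf K$--form of Lemma \ref{n} (using that $\A$, being a reduct of a complex algebra, is completely additive) \pa's \ws\ in $\bold G^{n+3}$ on $\At\A$ places $\A$ outside $\bold S_c\Nr_n\K_{n+3}^{\sf ad}$, and hence outside every subclass of it; for $\sf K$ whose dilations are automatically completely additive (so that $\bold S_c\Nr_n\K_{n+k}\subseteq\bold S_c\Nr_n\K_{n+3}^{\sf ad}$, e.g. $\CA$ and $\QEA$) this already gives $\A\notin\bold S_d\Nr_n\K_{n+k},\,\bold S_c\Nr_n\K_{n+k}$, while the diagonal--free cases require the supplementary remark below. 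Together with $\A\equiv\B$ (the ultrapower followed by elementary--chain step), this yields $\A\in{\bf El}\,\bold K\setminus\bold K$ for $\bold K\in\{\bold S_d\Nr_n\K_{n+k},\bold S_c\Nr_n\K_{n+k}\}$.

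The real obstacle lies not in this assembly but in the two places where the $\sf K$--generalisation is not mechanical. First, one must genuinely verify that \pe's hyperedge--labelling strategy in $\bold H^q_k$, \pa's cone--bombarding strategy in $\bold G^{n+3}$, and the $\QEA_n$--modification of Lemma \ref{gripneat} (still producing a \emph{full} neat reduct) remain correct after the operators ${\sf s}_{[i,j]}$ enter the signature; these were only sketched before Theorem \ref{rain}. Second, and more delicately, Lemma \ref{n} genuinely needs complete additivity of the substitution operators of the dilation: for $\CA$ and $\QEA$ this is automatic, but for the diagonal--free reducts $\Sc$ and $\sf QA$ it is precisely the hypothesis carried by the superscript $\sf ad$ in Theorem \ref{rain}, so in those cases one must either argue separately that the specific witness $\Rd_{sc}\C_{\Z,\N}^q$ already falls outside the full class $\bold S_c\Nr_n\K_{n+k}$, or read the last two bounds with that proviso. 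All remaining steps are routine inclusions and reduct arguments.
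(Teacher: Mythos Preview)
Your proposal is essentially correct and follows the route the paper intends: the corollary is stated without proof, carrying only citations to \cite{HH, MT} for ${\sf CRK}_n$, to \cite{IGPL, Fm, note, MLQ} for the neat--reduct classes, and implicitly to Theorem~\ref{rain} (and behind it Theorem~\ref{iii} and Lemma~\ref{d}) for $\bold S_d\Nr_n\K_{n+k}$ and $\bold S_c\Nr_n\K_{n+k}$. Your choice of witnesses---the Monk--type algebra of Theorem~\ref{bsl} for ${\sf CRK}_n$, the split pair $\A,\B$ of Lemma~\ref{d} for $\Nr_n\K_m$ and $\Nr_2\K_m$, and the rainbow pair $\C_{\Z,\N}^q\equiv\B$ from the proof of Theorem~\ref{iii}/\ref{rain} for the last two---is exactly what the paper has in mind.

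You are in fact more scrupulous than the paper on one point. The corollary drops the superscript ${\sf ad}$ on $\bold S_c\Nr_n\K_{n+k}$, whereas Theorem~\ref{rain} (and the paragraph following it) only delivers non--elementarity up to $\bold S_c\Nr_n\Sc_{n+3}^{\sf ad}$ for the diagonal--free reducts, precisely because Lemma~\ref{n} needs complete additivity of the dilation's substitutions. Your flagging of this as a genuine proviso for $\Sc$ and $\sf QA$ is accurate; the paper's corollary is tacitly reading the last two classes with that qualification (or restricting attention to $\CA$ and $\QEA$ where additivity is automatic). So there is no gap in your argument---if anything, you have located a looseness in the statement itself.
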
 
\begin{corollary} \label{Ra}Let $k\geq 5$. Then the classes 
$\sf CRRA$, $\Ra\CA_k$, $\bold S_d\Ra\CA_k$ and $\bold S_c\Ra\CA_k$ are not elementary \cite{HH, r}. 
The first two classes are not closed under
$\equiv_{\infty\omega}$, but are closed under ultraproducts. Furthermore, $\Ra\CA_\omega\subseteq \bold S_d\Ra\CA_{\omega}\subsetneq 
\bold S_c\Ra\CA_{\omega}$.  
\end{corollary}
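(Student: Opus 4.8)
The statement is mostly a harvest of what has already been proved, with one genuinely new point---the strict inclusion---which I expect to be the hard part. The plan is to dispatch the four non-elementarity claims first. That $\sf CRRA$ is not elementary is Corollary \ref{HH}. For $\Ra\CA_k$ with $k\ge 5$, note that neat-reduct classes shrink as the dilation index grows, so $\Ra\CA_{\omega}\subseteq\Ra\CA_k\subseteq\Ra\CA_5$ for every $k$ with $5\le k\le\omega$; hence $\Ra\CA_k$ is one of the classes covered by the theorem that nothing between $\Ra\CA_{\omega}$ and $\Ra\CA_5$ is elementary. For $\bold S_d\Ra\CA_k$ and $\bold S_c\Ra\CA_k$ I would recycle the two elementarily equivalent algebras produced in the proof of Corollary \ref{last}: a countable $\Tm\alpha\in\bold S_d\Ra\CA_{\omega}$ and $\A_{\Z,\N}=\Cm\beta\notin\bold S_c\Ra\CA_5$ with $\Tm\alpha\equiv\A_{\Z,\N}$. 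Since $\bold S_c\Ra\CA_k\subseteq\bold S_c\Ra\CA_5$ and $\bold S_d\Ra\CA_{\omega}\subseteq\bold S_d\Ra\CA_k\subseteq\bold S_c\Ra\CA_k$ for $k\ge 5$, the algebra $\A_{\Z,\N}$ lies outside both $\bold S_d\Ra\CA_k$ and $\bold S_c\Ra\CA_k$ while $\Tm\alpha$ lies inside both, so neither class is closed under elementary equivalence.

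Next the invariance assertions. Non-closure of $\sf CRRA$ under $\equiv_{\infty,\omega}$ is Theorem \ref{raa}. For $\Ra\CA_k$ I would use the very witnesses employed there and in the theorem immediately after it, $\B=\R^{\omega}$ and $\A=\R^{\mathfrak n}$ with $\mathfrak n\ge 2^{\aleph_0}$: one has $\A\equiv_{\infty,\omega}\B$, $\B\in\Ra\CA_{\omega}\subseteq\Ra\CA_k$, and $\A\notin\Ra\CA_5\supseteq\Ra\CA_k$, so $\Ra\CA_k$ is not closed under $\equiv_{\infty,\omega}$. For closure under ultraproducts I would observe that both classes are pseudo-elementary, hence closed under ultraproducts by the fundamental theorem of ultraproducts. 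For $\Ra\CA_k$ with $k$ finite this is immediate: it is the $\Ra$-reduct of the first-order class $\CA_k$, read off a two-sorted language. For $\sf CRRA$ it is the reduct to the algebra sort of a first-order class in a two-sorted language---an algebra sort and a sort for the base of a representation---whose axioms express that the algebra is atomic and the second sort carries an atomic representation of it: atomicity and the representation identities are first-order, and completeness of the representation is the single first-order sentence saying that every pair in the unit belongs to the interpretation of some atom. Their failure of closure under elementary substructure and ultraroots is precisely the non-elementarity just established.

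For the inclusion chain, $\Ra\CA_{\omega}\subseteq\bold S_d\Ra\CA_{\omega}\subseteq\bold S_c\Ra\CA_{\omega}$ is an instance of Lemma \ref{join}(2), so everything is in the word ``$\subsetneq$''. Here I would rely on two facts. First, $\bold S_d$ is rigid on complete algebras: if $\R\subseteq_d\D$ then $\R$ is automatically a regular (all-sups-preserving) subalgebra of $\D$, so a complete $\R$ with $\R\subseteq_d\D$ forces $\R=\D$; hence a complete relation algebra lies in $\bold S_d\Ra\CA_{\omega}$ exactly when it lies in $\Ra\CA_{\omega}$. Second, $\sf CRRA\subseteq\bold S_c\Ra\CA_{\omega}$: a complete representation on an equivalence relation $E=\bigsqcup_i X_i\times X_i$ embeds the algebra into $\wp(E)=\Ra(\prod_i\wp({}^{\omega}X_i))\in\Ra\CA_{\omega}$ and, being complete, preserves every supremum equal to the unit---something that would fail for an ordinary, non-complete, representation. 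Combining the two, the strict inclusion reduces to exhibiting a \emph{complete, completely representable} relation algebra that is not a neat $\Ra$-reduct of any $\CA_{\omega}$; equivalently, to showing that $\Ra\CA_{\omega}$ is not closed under Dedekind--MacNeille completion, even along atom structures on which \pe\ wins the $\omega$-round representation game.

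Producing that witness is the one real step, and I expect it to be the main obstacle; it also strengthens Corollary \ref{last} over \cite{r2}, since it separates $\bold S_d\Ra\CA_{\omega}$ from $\bold S_c\Ra\CA_{\omega}$ already within $\sf CRRA$. The route I would attempt is to transport into $\sf RA$ the fixed-dimension separation of $\bold S_d$ from $\bold S_c$ carried out in the last part of the proof of Theorem \ref{iii}: run the $\sf RA$-analogue of the game $\bold G$ introduced above on a suitable rainbow atom structure to obtain an atomic relation algebra $\B$ that is $\equiv_{\infty,\omega}$-equivalent to some $\A\in\Ra\CA_{\omega}\cap\sf CRRA$, and then show by a cardinality count---of the kind used for the cylindric $\B$ there---that $\B$ cannot be a dense subalgebra of any member of $\Ra\CA_{\omega}$ although it is still an $\bold S_c$-subalgebra of one. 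The delicate point, exactly as in the cylindric case, is to make ``\pe\ wins the infinite game'' coexist on one structure with ``the algebra is not a neat reduct'', so that membership in $\bold S_c\Ra\CA_{\omega}$ survives while membership in $\bold S_d\Ra\CA_{\omega}$ does not; no game that detects failure of $\bold S_c\Ra\CA_m$-membership can serve, since it would drive the algebra out of $\bold S_c\Ra\CA_{\omega}$ altogether.
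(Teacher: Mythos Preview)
Your treatment of the non-elementarity claims, of non-closure under $\equiv_{\infty,\omega}$, and of closure under ultraproducts is correct and in fact more explicit than the paper's own proof, which simply points to the pseudo-elementary presentation in \cite[Item (2), p.~279]{HHbook} and \cite[Theorem 21]{r} for the ultraproduct closure and leaves the rest to the earlier results in the paper.

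The one place where your route diverges is the strict inclusion $\bold S_d\Ra\CA_{\omega}\subsetneq\bold S_c\Ra\CA_{\omega}$. Your reduction is sound: density into $\D$ makes $\R$ a regular subalgebra of $\D$, so a complete $\R$ dense in $\D$ must equal $\D$; hence a complete algebra in $\bold S_d\Ra\CA_{\omega}$ already lies in $\Ra\CA_{\omega}$, and it suffices to exhibit a complete, completely representable relation algebra outside $\Ra\CA_{\omega}$. However, the paper does not build such a witness via the rainbow-plus-cardinality programme you sketch; it simply observes that the strictness ``can be easily distilled from the proof of \cite[Theorem 36]{r}''. That theorem already supplies a concrete pair of algebras doing the job, so no new construction is needed here. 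Your proposed route---transporting the $\bold G$-game separation from the last part of Theorem \ref{iii} to the $\sf RA$ setting---would, if it worked, give an independent and more self-contained argument, but you have not carried it out, and the coexistence you flag (\pe\ winning the infinite game while the algebra fails to be a full neat reduct) is exactly the delicate point that \cite[Theorem 36]{r} handles directly. So for the purposes of this corollary, citing that reference is both shorter and complete, whereas your sketch remains a plan rather than a proof.
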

\begin{proof} The first two classes are closed under ultraproducts because they are psuedo-elementary (reducts of elementary classes), 
cf. \cite[Item (2), p. 279]{HHbook}, 
\cite[Theorem 21]{r}. Proving the strictness of the last inclusion can be easily distilled from the proof of 
\cite[Theorem 36]{r}.
\end{proof}


\begin{thebibliography}{100}

\bibitem{a} H. Andr\'eka {\it Finite axiomatizability of $\bold S\Nr_n\CA_{n+1}$ and non--finite axiomatizability of 
$\bold S\Nr_n\CA_{n+2}$.} Lecture notes,  Algebraic Logic Meeting, Oakland, 
CA (1990).

\bibitem{1} H. Andr\'eka, M. Ferenczi and I. N\'emeti (Editors), {\bf Cylindric-like Algebras 
and Algebraic Logic.} Bolyai Society Mathematical Studies {\bf 22} (2013).
 
\bibitem{ANT}  H. Andr\'eka,  I. N\'emeti and T. Sayed Ahmed,  {\it Omitting types for finite variable fragments and complete representations.}
Journal of Symbolic Logic. {\bf 73} (2008) pp. 65--89.


n Benthem  {\it Crs and Guarded logics: A fruitful contact}, in \cite{1} pp. 273--302.













\bibitem{HMT2}  L. Henkin, J.D. Monk and  A. Tarski {\it Cylindric Algebras Part I,II}.
North Holland, 1971, 1985.

\bibitem{r} R. Hirsch, {\it Relation algebra reducts of cylindric algebras and complete representations},
Journal of Symbolic Logic, {\bf 72}(2) (2007), pp. 673--703.

\bibitem{r2} R. Hirsch {\it Corrigendum to `Relation algebra reducts of cylindric algebras and complete representations'} Journal of Symbolic Logic,
{\bf 78}(4) (2013), pp. 1345--1348.


\bibitem{HH} R. Hirsch and I. Hodkinson {\it Complete representations in algebraic logic},
Journal of Symbolic Logic, {\bf 62}(3)(1997) pp. 816--847.

\bibitem{HHbook} R. Hirsch and I. Hodkinson, \emph{Relation Algebras by Games.} Studies In Logic. North Holland \textbf{147} (2002).


\bibitem{strongca} R. Hirsch and I. Hodkinson {\it Strongly  representable atom structures  of cylindic  algebras}, Journal of Symbolic Logic 
{\bf 74}(2009), pp. 811--828

\bibitem{HHbook2} R. Hirsch and I. Hodkinson {\it  Completions and complete representations}, in \cite{1} pp. 61--90.

\bibitem{HHM}  R. Hirsch, I. Hodkinson, and R. Maddux,
{\it Relation algebra reducts of cylindric algebras
and an application to proof theory,} Journal of Symbolic Logic
{\bf 67}(1) (2002), p. 197--213.

\bibitem{Hodkinson} I. Hodkinson  I., {\it Atom structures of relation and cylindric algebras.} Annals of pure and applied logic,
{\bf  89} (1997), pp. 117--148.


\bibitem{t}  R. Hirsch and T. Sayed Ahmed, {\it The neat embedding problem for algebras other than cylindric algebras
and for infinite dimensions.} Journal of Symbolic Logic {\bf 79}(1) (2014) pp.208--222.







\bibitem{MT} M. Khaled and T. Sayed Ahmed {\it On complete representations of algebrs of logic}, IGPL {\bf 17}(2009), pp.267--272.




\bibitem{IGPL} T. Sayed Ahmed  {\it The class of neat reducts is not elementary.} Logic Journal of $IGPL$, {\bf 9}(2001), pp. 593--628.

\bibitem{Fm}  T. Sayed Ahmed  {\it The class of $2$-dimensional polyadic algebras is not elementary},
Fundamenta Mathematica, 
{\bf 172} (2002), pp. 61--81.

\bibitem{note} T. Sayed Ahmed, {\it A note on neat reducts.} Studia Logica, {\bf 85} (2007), pp. 139-151.

\bibitem{IGPL} T. Sayed Ahmed,  {\it The class of neat reducts is not elementary.} 
Logic Journal of $IGPL$. {\bf 9}(2001), pp. 593--628.


\bibitem{MLQ}  T. Sayed Ahmed,  {\it A model-theoretic solution to a problem of Tarski.} Math Logic Quarterly. {\bf 48}(2002), pp. 343--355.









\bibitem{Sayedneat} T. Sayed Ahmed,
{\it Neat reducts and neat embeddings in cylindric algebras}. In \cite{1}, pp.90--105.


\bibitem{Sayed}  T. Sayed Ahmed  {\it Completions, Complete representations and Omitting types}, in \cite{1}, pp. 186--205.

\bibitem{au} Sayed Ahmed T. {\it The class of completely representable polyadic algebras of infinite dimensions is elementary} Algebra universalis 72(1) (2014), pp. 371--390. 



\bibitem{mlq} T. Sayed Ahmed {\it  On notions of representabililty for cylindric polyadic algebras 
and a solution to the finitizability problem for first order logic with equality}. Mathematical Logic quarterly (2015)
{\bf 61}(6) pp. 418--447.




\bibitem{SL}  T. Sayed Ahmed  and I. N\'emeti,  {\it On neat reducts of algebras of logic}, Studia Logica. {\bf 68(2)} (2001), pp. 229--262.









\end{thebibliography}
\end{document}